\newtheorem{Theorem}{Theorem}[section]
\newtheorem{Lemma}[Theorem]{Lemma}
\newtheorem{Corol}[Theorem]{Corollary}
\newtheorem{Prop}[Theorem]{Proposition}
\newtheorem{Rem}[Theorem]{Remark}
\newtheorem{Def}[Theorem]{Definition}
\newtheorem{Hyp}[Theorem]{Hypothesis}
\def\N{\mathbb N}
\def\R{\mathbb R}
\begin{document}

\title[Regularity of invariant sets]
{Regularity of invariant sets in semilinear damped wave equations}%
\author{Martino Prizzi}

\address{Martino Prizzi, Universit\`a di Trieste, Dipartimento di
Matematica e Informatica, Via Valerio 12/1, 34127 Trieste, Italy}%
\email{prizzi@dmi.units.it}%
\subjclass{35L70,
35B40, 35B65 }%
\keywords{damped wave equation, invariant set, regularity}%

\date{\today}%
\begin{abstract} Under fairly general assumptions, we prove that every compact invariant subset $\mathcal I$ of the semiflow generated
by the semilinear damped wave equation
\begin{equation*}
\begin{aligned}
\epsilon
u_{tt}+u_t+\beta(x)u-\sum_{ij}(a_{ij}(x)u_{x_j})_{x_i}&=f(x,u),&&(t,x)\in[0,+\infty[\times\Omega,\\
u&=0,&&(t,x)\in[0,+\infty[\times\partial\Omega
\end{aligned}\end{equation*}
in $H^1_0(\Omega)\times L^2(\Omega)$ is in fact bounded in $D(\mathbf A)\times H^1_0(\Omega)$.
Here $\Omega$ is an arbitrary, possibly unbounded, domain in $\R^3$, $\mathbf A u=\beta(x)u-\sum_{ij}(a_{ij}(x)u_{x_j})_{x_i}$
is a positive selfadjoint elliptic operator and $f(x,u)$ is a nonlinearity of critical growth. The nonlinearity $f(x,u)$ needs not to satisfy any dissipativeness assumption and the
invariant subset $\mathcal I$ needs not to be an an attractor.
\end{abstract}
\maketitle

\section{Introduction}
Consider the semilinear damped wave equation
\begin{equation}\label{intro1}
\begin{aligned}
\epsilon
u_{tt}+u_t+\beta(x)u-\sum_{ij}(a_{ij}(x)u_{x_j})_{x_i}&=f(x,u),&&(t,x)\in[0,+\infty[\times\Omega,\\
u&=0,&&(t,x)\in[0,+\infty[\times\partial\Omega,
\end{aligned}\end{equation}
where $\Omega$ is an arbitrary, possibly unbounded, domain in
$\R^3$, $f(x,u)$ is a nonlinearity of critical growth and $\mathbf
A u:=\beta(x)u-\sum_{ij}(a_{ij}(x)u_{x_j})_{x_i}$ is a positive
selfajoint elliptic operator. It is well known (see e-g.
\cite{Tem}) that equation (\ref{intro1}), under appropriate
conditions on $a_{ij}(x)$, $\beta(x)$ and $f(x,u)$, generates a
(local) semiflow in the space $H^1_0(\Omega)\times L^2(\Omega)$.
We remind that a subset $\mathcal S$ of $H^1_0(\Omega)\times
L^2(\Omega)$ is called {\em invariant} for the semiflow generated
by (\ref{intro1}) if for every $(u_0,v_0)\in\mathcal S$ there
exists a solution $(u(\cdot),v(\cdot))\colon \R\to
H^1_0(\Omega)\times L^2(\Omega)$ of (\ref{intro1}) with
$(u(0),v(0))=(u_0,v_0)$ and $(u(t),v(t))\in\mathcal S$ for all
$t\in\R$. Assume that $\mathcal I$ is a compact invariant subset
for this semiflow. In this paper we shall prove that, under fairly
general assumptions on $a_{ij}(x)$, $\beta(x)$ and $f(x,u)$,
$\mathcal I$ is in fact bounded in $D(\mathbf A)\times
H^1_0(\Omega)$. This means that a solution of (\ref{intro1}) lying
in $\mathcal I$ is more regular in space than a generic solution.
Results of this kind have been known for a long time in case
$f(x,u)$ satisfies some dissipativeness condition and
 $\mathcal I$ is the global attractor of (\ref{intro1}) (see e.g. \cite{BV,Har, HR1,GhiTem,EM} and the more
 recent \cite{GraPa1,GraPa2,PaZe,CoPa}). In \cite{Ryba} regularity
 results were obtained for general invariant subsets in the subcritical case.
  To our knowledge, the most general results are contained in the
 paper \cite{HR2} by Hale and Raugel,
 where the authors, among other things, prove ``spatial regularity" of invariant subsets for a general
class of abstract semilinear evolution equations.
 The equations considered by Hale and Raugel are of the form $\dot u=Au+f(u)$, where $A$ is the generator of
 a $C^0$-semigroup of linear operators in a Banach space $X$ and $f$ is a
 nonlinearity of class $C^{1,1}$. The assumptions in \cite{HR2} are too elaborated to be summarized here.
 The technique relies on suitable Galerkin decompositions of the solutions lying in the invariant subset.
 Roughly speaking, every solution $u(t)$ in the invariant subset
splits as $u(t)=v(t)+w(t)$, where $w$ is the  fixed point of an
integral equation and $v(t)$ is the solution of a retarded
differential equation on a (usually finite dimensional) subspace
of $X$. The applications described in \cite{HR2} consider only the
case of equations on bounded domains, where a natural Galerkin
decomposition is supplied by the (finite dimensional) spectral
projections. However,  it is very likely that the abstract results
of \cite{HR2} should apply also to the case of equations on
unbounded domains. In this case, the decomposition on a basis of
eigenfunctions should be replaced by the use of the spectral
family of the operator $A$.

 Our aim is to go beyond the results of \cite{HR2} in the particular case of the semilinear damped wave equation (\ref{intro1}).
 We shall prove our regularity results without any smoothness and/or boundedness assumption on $\Omega$.
 The nonlinearity $f(x,u)$ needs not to be of class $C^{1,1}$ in $u$, but only of class $C^{1,\beta}$ for some $0<\beta<1$.
 Moreover, we shall not exploit Galerkin decompositions of the solutions, so we bypass the problem of constructing spectral families.
 Finally, we do not need to use the theory of retarded differential equations.

 The idea of the proof is very simple, although it requires a careful functional analytic setting. We give here an informal
 sketch. Let $(\bar u(\cdot),\bar u_t(\cdot))\colon\R\to H^1_0(\Omega)\times L^2(\Omega)$ be a
 bounded mild solution of (\ref{intro1}). Set $\bar v(t):=\bar u_t(t)$. Then
 $(\bar v(\cdot),\bar v_t(\cdot))\colon \R\to L^2(\Omega)\times H^{-1}(\Omega)$ is a mild
 solution of
\begin{equation}\label{intro2}
\begin{aligned}
\epsilon
v_{tt}+v_t+\beta(x)v-\sum_{ij}(a_{ij}(x)v_{x_j})_{x_i}&=\partial_u
f(x,\bar u(t))v,&&(t,x)\in[0,+\infty[\times\Omega,\\
v&=0,&&(t,x)\in[0,+\infty[\times\partial\Omega.
\end{aligned}
\end{equation}
Take $\theta>0$ and denote by $\mathbf U(t,s)$ the evolution
system generated by the non-autonomous linear equation
\begin{equation}\label{intro3}
\begin{aligned}
\epsilon
v_{tt}+v_t+\beta(x)v-\sum_{ij}(a_{ij}(x)v_{x_j})_{x_i}+\theta v
-\partial_u f(x,\bar
u(t))v&=0,&&(t,x)\in[0,+\infty[\times\Omega,\\
v&=0,&&(t,x)\in[0,+\infty[\times\partial\Omega
\end{aligned}
\end{equation}
in the space $L^2(\Omega)\times H^{-1}(\Omega)$. Then, for $t\geq
s$, we have that
\begin{equation}\label{intro4}
(\bar v(t),\bar v_t(t))=\mathbf U(t,s)(\bar v(s),\bar
v_t(s))+\int_s^t\mathbf U(t,p)(0,(\theta/\epsilon)\bar v(p))\,dp.
\end{equation}
 We shall prove in Theorem \ref{decay} below that, if $\theta$ is sufficiently large, then $\mathbf U(t,s)$ satisfies
appropriate exponential decay estimates in $L^2(\Omega)\times
H^{-1}(\Omega)$ as well as in $H^1_0(\Omega)\times L^2(\Omega)$.
Then, letting $s\to-\infty$ in (\ref{intro4}), we obtain that
\begin{equation}\label{intro5}
(\bar v(t),\bar v_t(t))=\int_{-\infty}^t\mathbf
U(t,p)(0,(\theta/\epsilon)\bar v(p))\,dp.
\end{equation}
In this way we get rid of the Cauchy data $(\bar v(s),\bar
v_t(s))$ and, since $(0,(\theta/\epsilon)\bar v(p))\in
H^1_0(\Omega)\times L^2(\Omega)$, we deduce that actually $(\bar
v(\cdot),\bar v_t(\cdot))$ is  a bounded function from $\R$ into
$H^1_0(\Omega)\times L^2(\Omega)$ and the conclusion follows. A similar idea was already exploited in \cite{EM}.

The paper is organized as follows. In Section 2 we introduce notations, we set the main assumptions and we state
the main results. Sections 3 and 4 are devoted to the proof of the main results. In Section 5 we exploit the regularity
results to prove  upper-semicontinuity of the attractors of (\ref{intro1}) as $\epsilon\to 0$ when $f(x,u)$ is dissipative,
improving a previous result obtained with K. Rybakowski \cite{PR3}.

\section {Notation, statements and remarks}

Before we describe in detail our assumptions and our results, we
need to introduce some notation. In this paper $\Omega$
is an arbitrary open subset of $\R^3$, bounded or not.   Given  a function $g\colon \Omega\times \R\to \R$,
we denote by $\hat g$  the Nemitski operator which
associates with every function $u\colon \Omega\to \R$ the function
$\hat g(u)\colon \Omega\to \R$ defined by
 $$
  \hat g(u)(x)= g(x,u(x)),\quad x\in \Omega.
 $$

If $I\subset \R$, $Y$ and $X$ are normed spaces with $Y\subset X$
and if $u\colon I\to Y$ is a function which is differentiable as a
function into $X$ then we denote its $X$-valued derivative by
$(\partial_t \mid X)\,u$. Similarly, if $X$ is a Banach space and
$u\colon I\to X$ is integrable as a function into $X$, then we
denote its $X$-valued integral by $\int _I u(t)\,(dt\mid X)$. If
$X$ and $Y$ are Banach spaces, we denote by $\mathcal L(X,Y)$ the
space of bounded linear operators from $X$ to $Y$. If $X=Y$ we
write just $\mathcal L(X)$.

\begin{Hyp}\label{hyp1}\
\begin{enumerate}
\item$a_0$,
$a_1\in]0,\infty[$ are constants and $a_{ij}\colon
\Omega\to \R$ are functions in
$L^\infty(\Omega)$ such that $a_{ij}=a_{ji}$, $i$,
$j=1$, \dots, $3$, and for every $\xi\in\R^3$ and a.e.
$x\in\Omega$,
$$a_0|\xi|^2\le \sum_{i,j=1}^3
a_{ij}(x)\xi_i\xi_j\le a_1|\xi|^2 .$$
\item $\beta\colon
\Omega\to \R$ is a measurable function with the property
that
\begin{enumerate}
\item for every $\nu>0$ there is a $C_\nu>0$ with
$$\int_\Omega|\beta(x)||u(x)|^2\,dx\leq \nu\int_\Omega|\nabla u(x)|^2\,dx+C_\nu \int_\Omega|u(x)|^2\,dx
$$ for all $u\in
H^1_0(\Omega)$;
\item there exists $\lambda_1>0$ such that, setting  $A(x):=(a_{ij}(x))_{i,j=1}^3$,
$$\int_\Omega A(x)\nabla u(x)\cdot\nabla u(x)\,dx+\int_\Omega\beta(x)|u(x)|^2\,dx\geq\lambda_1\int_\Omega |u(x)|^2\,dx$$ for all $u\in
H^1_0(\Omega)$.\end{enumerate}\end{enumerate}
\end{Hyp}
\begin{Rem}Condition {\em(a)} in Hypothesis \ref{hyp1} is satisfied, e.g., if
$\beta\in L^p_{\mathrm u}(\R^3)$ with $p>3/2$. Here we denote by
$L^p_{\mathrm u}(\R^3)$ the set of  measurable functions
$\zeta\colon \R^3\to \R$ such that $$
 \|\zeta\|_{L^p_{\mathrm u}}:=\sup_{y\in \R^3}\left(\int_{
 B(y)}|\zeta(x)|^p\,d x\right)^{1/p}<\infty,
$$ where, for $y\in\R^3$, $B(y)$ is the open unit cube in $\R^3$
centered at $y$ (see \cite{PR2} for details).
\end{Rem}

By Lemma 3.4 in \cite{PR2}, the scalar product
\begin{equation}
\langle u,v\rangle_{H^1_0}=\int_\Omega A(x)\nabla u(x)\cdot\nabla
v(x)\,dx+\int_\Omega\beta(x)u(x)v(x)\,dx,\quad u,v\in H^1(\Omega)
\end{equation}
is equivalent to the usual scalar product on $H^1_0(\Omega)$. From
now on,  we denote by $\|\cdot\|_{H^1_0}$ the norm associated with
$\langle \cdot,\cdot\rangle_{H^1_0}$.

Let $\mathbf A$ be the selfadjoint operator on $L^2(\Omega)$
defined by the differential operator $u\mapsto\beta
u-\sum_{ij}(a_{ij}u_{x_j})_{x_i}$. Then $\mathbf A$ generates a
family $X^\kappa$, $\kappa\in\R$, of fractional power spaces with
$X^{-\kappa}$ being the dual of $X^\kappa$ for
$\kappa\in]0,+\infty[$. For $\kappa\in]0,+\infty[$, the space
$X^\kappa$ is a Hilbert space with respect to the scalar product
\begin{equation*}
\langle u,v\rangle_{X^\kappa}:=\langle{\mathbf A}^\kappa
u,{\mathbf A}^\kappa v\rangle_{L^2},\quad u,v\in X^\kappa.
\end{equation*}
Also, the space $X^{-\kappa}$ is a Hilbert space with respect to
the scalar product $\langle \cdot,\cdot\rangle_{X^{-\kappa}}$ dual
to the scalar product  $\langle \cdot,\cdot\rangle_{X^{\kappa}}$,
i.e.
\begin{equation*}
\langle u',v'\rangle_{X^{-\kappa}}=\langle R^{-1}_\kappa
u',R^{-1}_\kappa v'\rangle_{X^\kappa},\quad u,v\in X^{-\kappa},
\end{equation*}
where $R_\kappa\colon X^\kappa\to X^{-\kappa}$ is the Riesz
isomorphism $u\mapsto\langle\cdot,u\rangle_{X^\kappa}$.

 We write
\begin{equation*}
H_\kappa=X^{\kappa/2},\quad\kappa\in\R.
\end{equation*}
Note that $H_0=L^2(\Omega)$, $H_1=H^1_0(\Omega)$,
$H_{-1}=H^{-1}(\Omega)$ and $H_2=D({\mathbf A})$.

For $\kappa\in\R$ the operator $\mathbf A$ induces a selfadjoint
operator $\mathbf A_\kappa\colon H_{\kappa+2}\to H_{\kappa}$. In
particular $\mathbf A=\mathbf A_0$. Moreover,
\begin{equation*}
\langle u,v\rangle_{H^1_0}=\langle\mathbf A_0
u,v\rangle_{L^2},\quad u\in D(\mathbf A_0),\,v\in H^1_0(\Omega).
\end{equation*}

For $\epsilon\in]0,1]$ and $\kappa\in\R$ set
$Z_\kappa:=H_{\kappa+1}\times H_{\kappa}$ and define the linear
operator $\mathbf B_{\epsilon,\kappa}\colon Z_{\kappa+1}\to
Z_{\kappa}$ by
\begin{equation*}
\mathbf B_{\epsilon,\kappa}(u,v):=(v,-(1/\epsilon)(v+\mathbf
A_\kappa u)),\quad (u,v)\in Z_{\kappa+1}.
\end{equation*}
It follows that $\mathbf B_{\epsilon,\kappa}$ is $m$-dissipative
on $Z_{\kappa}$ (cf the proof of Prop. 3.6 in \cite{PR2}).
Therefore, by the Hille-Yosida-Phillips theorem (see e.g.
\cite{CH}), $\mathbf B_{\epsilon,\kappa}$ is the infinitesimal
generator of a $C^0$-semigroup $\mathbf T_{\epsilon,\kappa}(t)$,
$t\in[0,+\infty[$, on $Z_{\kappa}$.

\begin{Hyp}\label{hyp2}\
\begin{enumerate}
\item $f\colon\Omega\times\R\to\R$ is such that, for every $u\in\R$, $f(\cdot,u)$ is measurable and $f(\cdot,0)\in L^2(\Omega)$;
\item for a.e. $x\in\Omega$, $f(x,\cdot)$ is of class $C^1$, $\partial_uf(\cdot,0)\in L^\infty(\Omega)$ and there exist constants $C$, $\beta$ and $\alpha$, with $C>0$, $0<\beta\leq 1$, $1\leq\alpha<2$
and $\alpha+\beta=2$, such that
$$
|\partial_u f(x,u_1)-\partial_u f(x,u_2)|\leq C(1+|u_1|^\alpha+|u_2|^\alpha)|u_1-u_2|^\beta.
$$
\end{enumerate}\end{Hyp}

The main properties of the Nemitski operator associated with $f$
are collected in the following Proposition, whose proof is left to
the reader.
\begin{Prop}\label{prop1} Assume Hypothesis \ref{hyp2}. Then
$\hat f\colon H^1_0(\Omega)\to L^2(\Omega)$ is continuously
differentiable, $D\hat f(u)[v](x)=\partial_u f(x,u(x))v(x)$ for
$u$, $v\in H^1_0(\Omega)$, and there exists a positive constant
$\tilde C>0$ such  that the following estimates hold:
\begin{equation}\label{n1}
\|\hat f(u)\|_{L^2}\leq\tilde C(1+\|u\|_{H^1_0}^3),\quad u\in
H^1_0(\Omega)
\end{equation}
\begin{equation}\label{n2}
\|D\hat f (u)\|_{{\mathcal L}(H^1_0,L^2)}\leq \tilde
C(1+\|u\|_{H^1_0}^2),\quad u\in H^1_0(\Omega)
\end{equation}
\begin{multline}\label{n3}
\|D\hat f (u_1)-D\hat f (u_2)\|_{{\mathcal L}(H^1_0,L^2)}\leq
\tilde
C(1+\|u_1\|_{H^1_0}^\alpha+\|u_\alpha\|_{H^1_0}^\alpha)\|u_1-u_2\|_{H^1_0}^\beta,\\
u_1,u_2\in H^1_0(\Omega).
\end{multline}
If $u\in H^1_0(\Omega)$ and $v\in L^2(\Omega)$, then
$\widehat{\partial_u f}(u)\cdot v\in H^{-1}(\Omega)$ and the
following estimates hold:
\begin{equation}\label{n4}
\|\widehat{\partial_u f}(u)\|_{\mathcal L(L^2,H^{-1})}\leq\tilde
C(1+\|u\|_{H^1_0}^2),\quad u\in H^1_0(\Omega)
\end{equation}
\begin{multline}\label{n5}
\|\widehat{\partial_u f}(u_1)-\widehat{\partial_u
f}(u_2)\|_{\mathcal L(L^2,H^{-1})}\leq\tilde
C(1+\|u_1\|_{H^1_0}^\alpha+\|u_2\|_{H^1_0}^\alpha)\|u_1-u_2\|_{H^1_0}^\beta,\\
u_1,u_2\in H^1_0(\Omega).
\end{multline}
Finally, whenever the function $t\mapsto u(t)$ is continuous from
$\R$ to $H^1_0(\Omega) $ and continuously differentiable from $\R$
to $L^2(\Omega)$, then the function $t\mapsto\hat f(u(t))$ is
continuously differentiable from $\R$ to $H^{-1}(\Omega)$ and
\begin{equation}\label{n6}
(\partial_t\mid H^{-1})(\hat f \circ u )(t)=\widehat{\partial_u
f}(u(t))\cdot(\partial_t\mid L^2)u(t)
\end{equation}
\qed
\end{Prop}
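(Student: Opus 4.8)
The plan is to reduce everything to pointwise growth bounds on $f$ and $\partial_u f$ combined with the Sobolev embedding $H^1_0(\Omega)\hookrightarrow L^6(\Omega)$ — which holds on an arbitrary open set because functions in $H^1_0(\Omega)$ extend by zero to $\R^3$ — together with the inequality $\|u\|_{L^2}\le\lambda_1^{-1/2}\|u\|_{H^1_0}$ supplied by Hypothesis \ref{hyp1}; interpolating, one obtains a continuous embedding $H^1_0(\Omega)\hookrightarrow L^q(\Omega)$ for every $q\in[2,6]$, hence $\|\,|u|^\sigma\,\|_{L^{q/\sigma}}=\|u\|_{L^q}^\sigma\le C\|u\|_{H^1_0}^\sigma$ for $\sigma>0$ and $q\in[2,6]$. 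First I would unwind Hypothesis \ref{hyp2}: putting $u_2=0$ in the H\"older bound and using $\partial_u f(\cdot,0)\in L^\infty(\Omega)$ (together with $\beta\le 2$) gives $|\partial_u f(x,u)|\le C(1+|u|^2)$; integrating in $u$ from $0$ and using $f(\cdot,0)\in L^2(\Omega)$ gives $|f(x,u)|\le|f(x,0)|+C(|u|+|u|^3)$; and rewriting the Taylor remainder as $f(x,u+v)-f(x,u)-\partial_u f(x,u)v=\int_0^1\bigl(\partial_u f(x,u+sv)-\partial_u f(x,u)\bigr)v\,ds$ and inserting the H\"older bound yields $|f(x,u+v)-f(x,u)-\partial_u f(x,u)v|\le C(1+|u|^\alpha+|v|^\alpha)|v|^{\beta+1}$.

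With these in hand I would prove (\ref{n1})--(\ref{n3}). For (\ref{n1}) one squares the pointwise bound for $f(x,u(x))$, integrates over $\Omega$, and estimates $\|u\|_{L^2}$ and $\|u\|_{L^6}^3$ by $\|u\|_{H^1_0}$. For (\ref{n2}), $\|\widehat{\partial_u f}(u)v\|_{L^2}^2\le C\int_\Omega(1+|u|^4)|v|^2\,dx$, and H\"older's inequality with exponents $3/2$ and $3$ gives $\int_\Omega|u|^4|v|^2\,dx\le\|u\|_{L^6}^4\|v\|_{L^6}^2$. For (\ref{n3}), $\|(\widehat{\partial_u f}(u_1)-\widehat{\partial_u f}(u_2))v\|_{L^2}^2\le C\int_\Omega(1+|u_1|^{2\alpha}+|u_2|^{2\alpha})|u_1-u_2|^{2\beta}|v|^2\,dx$, and the three-factor H\"older inequality with exponents $3/\alpha$, $3/\beta$, $3$ — whose reciprocals sum to $(\alpha+\beta+1)/3=1$, which is exactly where $\alpha+\beta=2$ enters — bounds the leading terms by a multiple of $\|u_i\|_{L^6}^{2\alpha}\|u_1-u_2\|_{L^6}^{2\beta}\|v\|_{L^6}^2$, while the term not involving $u_i$ is handled by H\"older with exponents $3/\beta$ and $3/(3-\beta)$, giving $\|u_1-u_2\|_{L^6}^{2\beta}\|v\|_{L^{6/(3-\beta)}}^2$ with $6/(3-\beta)\in[2,6]$. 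The same scheme, applied to the Taylor remainder and using $2(\beta+1)+2\alpha=6$, shows $\|\hat f(u+v)-\hat f(u)-\widehat{\partial_u f}(u)v\|_{L^2}^2\le C(1+\|u\|_{H^1_0}^{2\alpha})\|v\|_{H^1_0}^{2(\beta+1)}+C\|v\|_{H^1_0}^6$; since $2(\beta+1)>2$ and $6>2$, the right-hand side is $o(\|v\|_{H^1_0}^2)$ as $\|v\|_{H^1_0}\to0$, so $\hat f$ is Fr\'echet differentiable with $D\hat f(u)[v]=\widehat{\partial_u f}(u)v$, and (\ref{n3}) then gives continuity of $u\mapsto D\hat f(u)$ in $\mathcal L(H^1_0(\Omega),L^2(\Omega))$; hence $\hat f\in C^1$.

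For the $H^{-1}$ statements, given $u\in H^1_0(\Omega)$ and $v\in L^2(\Omega)$ I would bound the linear functional $w\mapsto\int_\Omega\partial_u f(x,u(x))v(x)w(x)\,dx$ on $H^1_0(\Omega)$: since $1+|u|^2\in L^3(\Omega)$, $v\in L^2(\Omega)$ and $w\in H^1_0(\Omega)\hookrightarrow L^6(\Omega)$, H\"older with exponents $3$, $2$, $6$ gives $\bigl|\int_\Omega\partial_u f(x,u)vw\,dx\bigr|\le C(1+\|u\|_{H^1_0}^2)\|v\|_{L^2}\|w\|_{H^1_0}$, so this functional belongs to $H^{-1}(\Omega)$ — by definition it is $\widehat{\partial_u f}(u)\cdot v$ — and (\ref{n4}) holds. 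Estimate (\ref{n5}) is obtained the same way from the H\"older bound on $\partial_u f(x,u_1)-\partial_u f(x,u_2)$: the leading terms are distributed over $L^6$, $L^6$, $L^2$, $L^6$ with exponents $6/\alpha$, $6/\beta$, $2$, $6$ (reciprocals summing to $1$ again by $\alpha+\beta=2$), and for the remaining term one picks $q\in[2,6]$ for $w$ so that $|u_1-u_2|^\beta$ lands in some $L^p(\Omega)$ with $p\beta\in[2,6]$, which is possible precisely because $0<\beta\le1$.

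The last item, the chain rule (\ref{n6}), is the one place where the argument is not purely computational: since $u(\cdot)$ is assumed differentiable only as a map into $L^2(\Omega)$, it cannot be obtained by composing the Fr\'echet derivative of $\hat f\colon H^1_0(\Omega)\to L^2(\Omega)$ with $(\partial_t\mid L^2)u$. Instead I would start from the identity in $H^{-1}(\Omega)$
\[
\hat f(u(t+h))-\hat f(u(t))=\int_0^1\widehat{\partial_u f}\bigl(u(t)+s(u(t+h)-u(t))\bigr)\bigl[u(t+h)-u(t)\bigr]\,(ds\mid H^{-1}),
\]
which follows by testing against $w\in H^1_0(\Omega)$, applying the scalar fundamental theorem of calculus pointwise in $x$, and using Fubini (legitimate by the H\"older/Sobolev bounds above). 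Dividing by $h$, the integrand equals $\widehat{\partial_u f}\bigl(u(t)+s(u(t+h)-u(t))\bigr)\bigl[(u(t+h)-u(t))/h\bigr]$; as $h\to0$, continuity of $u$ into $H^1_0(\Omega)$ and (\ref{n5}) give $\widehat{\partial_u f}(u(t)+s(u(t+h)-u(t)))\to\widehat{\partial_u f}(u(t))$ in $\mathcal L(L^2(\Omega),H^{-1}(\Omega))$ uniformly in $s\in[0,1]$, while $(u(t+h)-u(t))/h\to(\partial_t\mid L^2)u(t)$ in $L^2(\Omega)$, so the integrand converges in $H^{-1}(\Omega)$ uniformly in $s$ and the integral converges to $\widehat{\partial_u f}(u(t))\cdot(\partial_t\mid L^2)u(t)$, proving (\ref{n6}); continuity in $t$ of the right-hand side follows from (\ref{n5}), continuity of $u$ into $H^1_0(\Omega)$, and continuity of $(\partial_t\mid L^2)u$ into $L^2(\Omega)$. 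I expect the main obstacle to be purely one of bookkeeping: on an unbounded $\Omega$ there is no $L^q$ control for $q<2$, so every H\"older exponent must be kept inside $[2,6]$, and the constraints $1\le\alpha<2$, $0<\beta\le1$, $\alpha+\beta=2$ are precisely what make this possible.
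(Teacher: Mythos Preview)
Your argument is correct and complete; it is exactly the standard route via the Sobolev embedding $H^1_0(\Omega)\hookrightarrow L^q(\Omega)$ for $q\in[2,6]$ combined with H\"older's inequality, and your treatment of the chain rule (\ref{n6}) through the integral mean-value identity in $H^{-1}(\Omega)$ is the right way to circumvent the fact that $u(\cdot)$ is differentiable only into $L^2(\Omega)$. The paper itself gives no proof of this proposition (it is explicitly ``left to the reader''), so there is nothing to compare against.

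One cosmetic slip: in your argument for (\ref{n4}) you write ``$1+|u|^2\in L^3(\Omega)$'', which is false when $\Omega$ is unbounded. Of course you know this---you even flag the issue in your final remark---and the fix is the obvious one you use elsewhere: split off the constant part of the bound $|\partial_u f(x,u)|\le C(1+|u|^2)$ and estimate $\int_\Omega|v||w|\,dx$ by Cauchy--Schwarz, reserving the $L^3\!\cdot\!L^2\!\cdot\!L^6$ H\"older split for the $|u|^2$ part. With that adjustment the write-up is clean.
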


We consider the following semilinear damped wave equation:
\begin{equation}\label{eq1}
\begin{aligned}
\epsilon
u_{tt}+u_t+\beta(x)u-\sum_{ij}(a_{ij}(x)u_{x_j})_{x_i}&=f(x,u),&&(t,x)\in[0,+\infty[\times\Omega,\\
u&=0,&&(t,x)\in[0,+\infty[\times\partial\Omega
\end{aligned}\end{equation}
with Cauchy data $u(0)=u_0$, $u_t(0)=v_0$.
\begin{Rem}
The condition $\lambda_1>0$ in Hypothesis \ref{hyp1} is not
restrictive. Indeed, if Hypothesis \ref{hyp1} is satisfied with
$\lambda_1\leq 0$, one can take some $\gamma>0$ and add
$-\lambda_1u+\gamma u$ on both sides of (\ref{eq1}); then
Hypotheses \ref{hyp1} and \ref{hyp2} are fully satisfied, with
$\beta(x)$ replaced by $\beta(x)-\lambda_1+\gamma$ and $f(x,u)$
replaced by $f(x,u)-\lambda_1u+\gamma u$.
\end{Rem}

Following \cite{CH}, we rewrite equation (\ref{eq1}) as an
integral evolution equation in the space $Z_0=H^1_0(\Omega)\times
L^2(\Omega)$, namely
\begin{equation}\label{eq2}
(u(t),v(t))=\mathbf T_{\epsilon,0}(t)(u_0,v_0)+\int_0^t \mathbf
T_{\epsilon,0}(t-p)(0,(1/\epsilon)\hat f(u(p)))\,(dp\mid Z_0).
\end{equation}
Equation (\ref{eq2}) is called the {\em mild formulation} of (\ref{eq1}) and  solutions of (\ref{eq2})
are called {\em mild solutions} of (\ref{eq1}).
Note that by Proposition \ref{prop1} the nonlinear operator
$(u,v)\mapsto(0,\hat f(u))$ is Lipschitz continuos from $Z_0$ into
itself. Therefore,  if $(u_0,v_0)\in Z_0$, then (\ref{eq2})
possesses a unique continuous maximal solution
$(u(\cdot),v(\cdot))\colon[0,t_{\mathrm{max}}[\to Z_0$ (see Theor.
4.3.4 and Prop. 4.3.7 in \cite{CH}). We thus obtain a local
semiflow on $Z_0$. Moreover, $(u(\cdot),v(\cdot))$ is continuously
differentiable into $Z_{-1}$ and
\begin{equation}\label{eq3}
(\partial_t\mid Z_{-1})(u(t),v(t))=\mathbf
B_{\epsilon,-1}(u(t),v(t))+(0,(1/\epsilon)\hat f(u(t)))
\end{equation}
(see Theorem II.1.3 in \cite{Gold}). In particular, one has
\begin{equation}\label{eq4}\begin{cases}
(\partial_t\mid H_0)u(t)=v(t)\\ \epsilon(\partial_t\mid
H_{-1})v(t)=-v(t)-\mathbf A_{-1}u(t)+\hat f(u(t))
\end{cases}
\end{equation}
\begin{Def}\label{def1}
A function $(u(\cdot), v(\cdot))\colon\R\to Z_0$ is called a {\em
full solution} of (\ref{eq2}) iff, for every $s$, $t\in\R$, with
$s\leq t$, one has
\begin{equation*}
(u(t),v(t))=\mathbf T_{\epsilon,0}(t-s)(u(s),v(s))+\int_s^t
\mathbf T_{\epsilon,0}(t-p)(0,(1/\epsilon)\hat f(u(p)))\,(dp\mid
Z_0).
\end{equation*}
\end{Def}

Now we can state our first main result.
\begin{Theorem}\label{th1}
Assume that Hypotheses \ref{hyp1} and \ref{hyp2} are satisfied.
Let $\epsilon\in]0,1]$ be fixed. Let $(\bar u(\cdot), \bar
v(\cdot))\colon\R\to Z_0$ be a bounded full solution of
(\ref{eq2}), such that $\sup_{t\in\R}(\|\bar
u(t)\|_{H^1_0}^2+\epsilon\|\bar v(t)\|_{L^2}^2)\leq R$. Assume
that the first component $\bar u(\cdot)$ is uniformly continuous
with modulus of continuity $\omega(\cdot)$. Then $(\bar u(\cdot),\bar
v(\cdot))$ is continuous into $Z_1$, is continuously
differentiable into $Z_0$, and
\begin{equation*}
(\partial_t\mid Z_{0})(\bar u(t),\bar v(t))=\mathbf
B_{\epsilon,0}(\bar u(t),\bar v(t))+(0,(1/\epsilon)\hat f(\bar
u(t))).
\end{equation*}
Moreover, there exists a positive constant $\tilde R_\epsilon$ such that
\begin{equation*}
\sup_{t\in\R}(\|\mathbf A_0 \bar u(t)\|_{L^2}^2+\|\bar
v(t)\|_{H^1_0}^2+\epsilon\|(\partial_t\mid H_0)\bar
v(t)\|_{L^2}^2)\leq \tilde R_\epsilon.
\end{equation*}
The constant $\tilde R_\epsilon$ depends, besides $\epsilon$, only on the
constants in Hypotheses \ref{hyp1} and \ref{hyp2}, on $R$ and on
$\omega(\cdot)$.
\end{Theorem}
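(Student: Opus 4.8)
The plan is exactly the one sketched after~(\ref{intro1}): differentiate the mild equation (\ref{eq2}) once in time, regularize the resulting linear nonautonomous equation for the velocity by means of the decay estimates of Theorem~\ref{decay}, and then read off the extra regularity of $\bar u$ from (\ref{eq4}). First I would set $\bar v(t):=\bar u_t(t)$; by (\ref{eq4}) this is consistent, $(\partial_t\mid H_0)\bar u=\bar v$, and $\bar u\in C(\R,H_1)\cap C^1(\R,H_0)$. The second equation in (\ref{eq4}) displays $\epsilon(\partial_t\mid H_{-1})\bar v=-\bar v-\mathbf A_{-1}\bar u+\hat f(\bar u)$ as a continuous, uniformly bounded $H_{-1}$-valued function (using $\|\bar u(t)\|_{H^1_0}^2\le R$, $\epsilon\|\bar v(t)\|_{L^2}^2\le R$ and (\ref{n1})), so $\bar v\in C^1(\R,H_{-1})$ and $\bar v_t:=(\partial_t\mid H_{-1})\bar v$ is uniformly bounded in $H_{-1}$. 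Differentiating that equation a second time and invoking the chain rule (\ref{n6}) together with $(\partial_t\mid H_0)\bar u=\bar v$, one gets that in $Z_{-2}$ the pair $(\bar v,\bar v_t)$ is a classical solution of
\begin{equation*}
(\partial_t\mid H_{-2})\bar v=\bar v_t,\qquad \epsilon(\partial_t\mid H_{-2})\bar v_t=-\bar v_t-\mathbf A_{-2}\bar v+\widehat{\partial_u f}(\bar u)\cdot\bar v .
\end{equation*}
Since the inhomogeneity $(0,(1/\epsilon)\widehat{\partial_u f}(\bar u(\cdot))\cdot\bar v(\cdot))$ is, by (\ref{n4}), continuous and uniformly bounded into $Z_{-1}$, and $(\bar v,\bar v_t)\in C(\R,Z_{-1})$, the variation-of-constants formula together with the consistency of $\mathbf T_{\epsilon,-2}$ and $\mathbf T_{\epsilon,-1}$ (the latter being the restriction of the former to $Z_{-1}$) shows that $(\bar v,\bar v_t)$ is in fact a full mild solution \emph{in $Z_{-1}$} of the linearization (\ref{intro2}).

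Next I would pass to the shifted equation (\ref{intro3}). Fix $\theta>0$ and let $\mathbf U(t,s)$ be the evolution system on $Z_{-1}$ with (time-dependent) generator $\mathbf B_{\epsilon,-1}+\mathbf Q(t)$, where $\mathbf Q(t)(v,w):=(0,(1/\epsilon)(\widehat{\partial_u f}(\bar u(t))\cdot v-\theta v))$; by (\ref{n4})--(\ref{n5}) and the assumed modulus of continuity $\omega(\cdot)$ of $\bar u$, $\mathbf Q(\cdot)$ is a bounded, uniformly continuous $\mathcal L(Z_{-1})$-valued map, so $\mathbf U(t,s)$ is well defined and is precisely the evolution system generated by (\ref{intro3}). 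Writing the inhomogeneity of the linearized mild equation as $(0,(1/\epsilon)\widehat{\partial_u f}(\bar u)\cdot\bar v)=\mathbf Q(t)(\bar v,\bar v_t)+(0,(\theta/\epsilon)\bar v)$ and using the standard relation between the mild solutions built over $\mathbf B_{\epsilon,-1}$ and over $\mathbf B_{\epsilon,-1}+\mathbf Q(t)$, one obtains exactly (\ref{intro4}). Now I would choose $\theta$ as in Theorem~\ref{decay}, so that $\mathbf U(t,s)$ decays exponentially both in $Z_{-1}$ and in $Z_0$, with constants depending only on $\epsilon$, $R$, $\omega(\cdot)$ and the structural constants of Hypotheses~\ref{hyp1}--\ref{hyp2}. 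As $(\bar v(s),\bar v_t(s))$ is uniformly bounded in $Z_{-1}$ by the first step, letting $s\to-\infty$ in (\ref{intro4}) forces the first term to tend to $0$ in $Z_{-1}$ and yields (\ref{intro5}) in $Z_{-1}$. But $(0,(\theta/\epsilon)\bar v(p))\in Z_0$ with $Z_0$-norm bounded by a constant depending only on $\theta$, $R$, $\epsilon$, so the $Z_0$-decay of $\mathbf U$ makes the integral in (\ref{intro5}) converge absolutely in $Z_0$; hence $(\bar v,\bar v_t)\in C(\R,Z_0)$ (so that $\bar v\in C^1(\R,H_0)$, its $L^2$-valued derivative being this same $\bar v_t$) and $\sup_t(\|\bar v(t)\|_{H^1_0}^2+\epsilon\|\bar v_t(t)\|_{L^2}^2)$ is bounded by a constant of the advertised form.

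Finally I would lift $\bar u$. Rearranging the second equation in (\ref{eq4}), $\mathbf A_{-1}\bar u(t)=-\epsilon\bar v_t(t)-\bar v(t)+\hat f(\bar u(t))$, whose right-hand side now lies in $H_0=L^2(\Omega)$, is continuous into $H_0$, and is uniformly bounded there by (\ref{n1}) and the previous step; by the definition of $H_2=D(\mathbf A)$ (a standard property of the extrapolation scale) this forces $\bar u(t)\in H_2$ with $\mathbf A_0\bar u(t)$ equal to that right-hand side, so $\bar u\in C(\R,H_2)$ and $\sup_t\|\mathbf A_0\bar u(t)\|_{L^2}^2$ is bounded. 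Thus $(\bar u,\bar v)\in C(\R,Z_1)$ with $Z_1=D(\mathbf B_{\epsilon,0})$; being a mild solution of (\ref{eq2}) with values in $D(\mathbf B_{\epsilon,0})$, continuous there, and with continuous inhomogeneity $(0,(1/\epsilon)\hat f(\bar u))\in C(\R,Z_0)$, it is then a classical solution into $Z_0$, i.e. $(\partial_t\mid Z_0)(\bar u,\bar v)=\mathbf B_{\epsilon,0}(\bar u,\bar v)+(0,(1/\epsilon)\hat f(\bar u))$ (cf. Theorem~II.1.3 in \cite{Gold}). Collecting the estimates produces $\tilde R_\epsilon$ with the stated dependence on $\epsilon$, $R$, $\omega(\cdot)$ and the constants in Hypotheses~\ref{hyp1}--\ref{hyp2}.

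I expect the difficulty to be twofold. Within the theorem itself, the delicate point is the first step: turning the formal second differentiation of (\ref{eq2}) into the rigorous assertion that $(\bar v,\bar v_t)$ is a genuine mild solution of (\ref{intro2}) in $Z_{-1}$ requires careful bookkeeping on the scale $\{H_\kappa\}$ and on the chain rule (\ref{n6}) under the mere $C^{1,\beta}$ regularity of $f$. The real engine, however, is Theorem~\ref{decay}: its heart is the \emph{smoothing} exponential estimate for $\mathbf U(t,s)$ in $Z_0$ (not merely in $Z_{-1}$), and it is exactly there that uniform continuity of $\bar u$---through that of $t\mapsto\widehat{\partial_u f}(\bar u(t))$, controlled by $\omega(\cdot)$ via (\ref{n5})---is needed and produces the $\omega(\cdot)$-dependence of $\tilde R_\epsilon$.
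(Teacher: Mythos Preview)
Your proposal is correct and follows essentially the same route as the paper: differentiate the mild equation to obtain the $Z_{-1}$-valued linearized problem for $(\bar v,\bar v_t)$, rewrite it via the nonautonomous evolution system $\mathbf U[\theta](t,s)$ of the shifted equation, invoke the decay estimates of Proposition~\ref{decay} to let $s\to-\infty$ and upgrade the resulting integral from $Z_{-1}$ to $Z_0$, and finally read off $\bar u\in D(\mathbf A_0)$ from (\ref{eq4}). The only point where the paper is more explicit is the construction of $\mathbf U(t,s)$ and, crucially, its invariance of $Z_0$: this is obtained via Kato's Theorem~\ref{thkato} (Proposition~\ref{verkato}), which is what justifies speaking of $\mathbf U_{\epsilon,0}[\theta]$ at all and hence of the $Z_0$-decay you use; the passage from the $\mathbf T$-mild formulation to the $\mathbf U$-mild formulation that you call ``standard'' is isolated as Lemma~\ref{bridge}.
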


We remind that a subset $\mathcal I$ of $Z_0$ is called {\em
invariant} for the semiflow generated by (\ref{eq2}) if for every
$(u_0,v_0)\in\mathcal I$ there exists a full solution
$(u(\cdot),v(\cdot))$ of (\ref{eq2}) with $(u(0),v(0))=(u_0,v_0)$
and $(u(t),v(t))\in\mathcal I$ for all $t\in\R$.

\begin{Lemma}[{\bf Lemma 2.3 in \cite{HR2}}]\label{modcont}
If $\mathcal I$ is a compact invariant subset for the semiflow generated by (\ref{eq2}), then the set of all
the full solutions of (\ref{eq2}) in $\mathcal I$ is uniformly equicontinuous. \qed\end{Lemma}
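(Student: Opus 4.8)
\emph{Proof idea.} The plan is to produce a single function $\omega\colon[0,+\infty[\to[0,+\infty[$ with $\omega(\tau)\to0$ as $\tau\to0^+$ such that $\|(u(t),v(t))-(u(s),v(s))\|_{Z_0}\le\omega(t-s)$ for every $s\le t$ and every full solution $(u(\cdot),v(\cdot))$ of \eqref{eq2} lying in $\mathcal I$; since uniform equicontinuity on $\R$ of a family of functions is a statement about short increments, this is all that is needed. As $\mathcal I$ is compact it is bounded in $Z_0$, so I would first fix $\rho>0$ with $\|(u,v)\|_{Z_0}\le\rho$ on $\mathcal I$, and recall that, $\mathbf B_{\epsilon,0}$ being $m$-dissipative, $\mathbf T_{\epsilon,0}(t)$ is a contraction semigroup on $Z_0$, so $\|\mathbf T_{\epsilon,0}(t)\|_{\mathcal L(Z_0)}\le1$ for all $t\ge0$.

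For a full solution in $\mathcal I$ and $s\le t$, Definition \ref{def1} gives
\begin{equation*}
(u(t),v(t))-(u(s),v(s))=\bigl(\mathbf T_{\epsilon,0}(t-s)-I\bigr)(u(s),v(s))+\int_s^t\mathbf T_{\epsilon,0}(t-p)\,(0,(1/\epsilon)\hat f(u(p)))\,(dp\mid Z_0).
\end{equation*}
The integral term I would bound directly: since $(u(p),v(p))\in\mathcal I$ forces $\|u(p)\|_{H^1_0}\le\rho$, estimate \eqref{n1} gives $\|(0,(1/\epsilon)\hat f(u(p)))\|_{Z_0}=(1/\epsilon)\|\hat f(u(p))\|_{L^2}\le(1/\epsilon)\tilde C(1+\rho^3)=:M$, and with $\|\mathbf T_{\epsilon,0}(t-p)\|_{\mathcal L(Z_0)}\le1$ the term is at most $M(t-s)$ in $Z_0$-norm, a Lipschitz bound uniform over all full solutions in $\mathcal I$.

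The first term is where the argument has content. Here I would set $\omega_0(\tau):=\sup_{z\in\mathcal I}\|(\mathbf T_{\epsilon,0}(\tau)-I)z\|_{Z_0}$ and show $\omega_0(\tau)\to0$ as $\tau\to0^+$: the operators $\mathbf T_{\epsilon,0}(\tau)-I$ are equibounded (by $2$) for $\tau\in[0,1]$ and, by strong continuity of the semigroup, converge strongly to $0$; covering the compact set $\mathcal I$ by finitely many small balls then upgrades this to uniform convergence on $\mathcal I$. Since $(u(s),v(s))\in\mathcal I$, the first term is $\le\omega_0(t-s)$, so for $0\le t-s\le1$ we get $\|(u(t),v(t))-(u(s),v(s))\|_{Z_0}\le\omega_0(t-s)+M(t-s)$, while for $t-s>1$ the left-hand side is trivially $\le2\rho$; taking $\omega(\tau):=\omega_0(\tau)+M\tau$ on $[0,1]$ and $\omega(\tau):=\max\{2\rho,\,\omega_0(1)+M\}$ for $\tau>1$ (made nondecreasing) yields a common modulus of continuity, proving the claim. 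The only genuinely non-formal point is the passage from strong to \emph{uniform} strong continuity of $\mathbf T_{\epsilon,0}$ over $\mathcal I$; the rest is the mild formulation together with the cubic growth bound \eqref{n1}. Note that neither dissipativeness of $f$ nor any property of $\mathcal I$ beyond compactness enters.
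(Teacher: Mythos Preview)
The paper does not supply its own proof of this lemma; it simply cites Lemma~2.3 of Hale--Raugel \cite{HR2} and closes with a \qed. Your argument is correct and self-contained.

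For comparison: the Hale--Raugel argument is purely abstract and does not touch the mild formulation. It applies to any continuous local semiflow $\pi$ on a metric space with a compact invariant set $\mathcal I$: the restriction of $\pi$ to the compact set $[0,1]\times\mathcal I$ is uniformly continuous, so given $\varepsilon>0$ there is $\delta\in(0,1]$ with $d(\pi(\tau,z),\pi(0,z))<\varepsilon$ whenever $0\le\tau<\delta$ and $z\in\mathcal I$; since every full solution in $\mathcal I$ satisfies $z(t)=\pi(t-s,z(s))$ with $z(s)\in\mathcal I$, uniform equicontinuity follows at once. Your route instead exploits the specific structure of \eqref{eq2}---the contraction property of $\mathbf T_{\epsilon,0}$ and the cubic bound \eqref{n1}---and splits the increment into a Lipschitz Duhamel piece $M(t-s)$ plus a semigroup piece $\omega_0(t-s)$ controlled by compactness of $\mathcal I$. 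Both arguments rest on the same idea (strong continuity becomes uniform on compacta); the abstract one is shorter and more general, while yours makes the modulus explicit and shows that only the linear part contributes anything non-Lipschitz.
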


Therefore, if $\mathcal I$ is a compact invariant subset for the semiflow generated by (\ref{eq2}), then there exists a continuous, non decreasing function
$\omega\colon[0,1]\to\R_+$, with $\omega(0)=0$, such that, for every full solution $(u(\cdot),v(\cdot))$ of  (\ref{eq2}) in $\mathcal I$, one has
\begin{equation*}
\|u(t)-u(s)\|_{H^1_0}\leq\omega(|t-s|),\quad t,s\in\R,\quad |t-s|\leq 1.
\end{equation*}

 As a consequence of Theorem \ref{th1} and Lemma \ref{modcont}, one can easily prove the following
corollary.

\begin{Corol}\label{cor1}
Assume that Hypotheses \ref{hyp1} and \ref{hyp2} are satisfied.
Let $\epsilon\in]0,1]$ be fixed. Let $\mathcal I$ be a compact
invariant subset of the local semiflow generated by (\ref{eq2}) in
$Z_0$. Then $\mathcal I$ is a bounded subset of $Z_1$.\qed
\end{Corol}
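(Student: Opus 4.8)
The plan is to obtain the corollary by applying Theorem \ref{th1} to each full solution contained in $\mathcal I$, taking care to see that the constant produced by that theorem can be chosen uniformly over all such solutions. First I would extract from compactness the two pieces of uniform data on which Theorem \ref{th1} relies. Since $\mathcal I$ is compact in $Z_0$, it is bounded there, so there is $R>0$ with $\|u_0\|_{H^1_0}^2+\epsilon\|v_0\|_{L^2}^2\le R$ for every $(u_0,v_0)\in\mathcal I$; because any full solution $(u(\cdot),v(\cdot))$ in $\mathcal I$ takes all its values in $\mathcal I$, this yields $\sup_{t\in\R}(\|u(t)\|_{H^1_0}^2+\epsilon\|v(t)\|_{L^2}^2)\le R$ for each such solution, with one and the same $R$. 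Second, by Lemma \ref{modcont} and the remark following it there is a single continuous, nondecreasing $\omega\colon[0,1]\to\R_+$ with $\omega(0)=0$ such that $\|u(t)-u(s)\|_{H^1_0}\le\omega(|t-s|)$ whenever $|t-s|\le 1$, valid for the first component of \emph{every} full solution in $\mathcal I$; in particular each such $\bar u(\cdot)$ is uniformly continuous with this common modulus of continuity.

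Next I would fix an arbitrary full solution $(\bar u(\cdot),\bar v(\cdot))$ of (\ref{eq2}) contained in $\mathcal I$ and apply Theorem \ref{th1}. Its hypotheses hold, precisely with the $R$ and $\omega(\cdot)$ identified above, so the theorem produces a constant $\tilde R_\epsilon$ with
\[
\sup_{t\in\R}\bigl(\|\mathbf A_0\bar u(t)\|_{L^2}^2+\|\bar v(t)\|_{H^1_0}^2+\epsilon\|(\partial_t\mid H_0)\bar v(t)\|_{L^2}^2\bigr)\le\tilde R_\epsilon.
\]
The decisive point is the last sentence of Theorem \ref{th1}: $\tilde R_\epsilon$ depends only on $\epsilon$, on the constants in Hypotheses \ref{hyp1} and \ref{hyp2}, on $R$ and on $\omega(\cdot)$ — none of which depends on the particular full solution chosen. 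Hence one and the same $\tilde R_\epsilon$ works for all full solutions in $\mathcal I$ at once. Evaluating the displayed bound at $t=0$ and recalling that, by the definition of invariance, every $(u_0,v_0)\in\mathcal I$ equals $(\bar u(0),\bar v(0))$ for some full solution in $\mathcal I$, I conclude that $\|\mathbf A_0 u_0\|_{L^2}^2+\|v_0\|_{H^1_0}^2\le\tilde R_\epsilon$ for every $(u_0,v_0)\in\mathcal I$.

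Finally I would translate this into a norm bound in $Z_1=H_2\times H_1=D(\mathbf A)\times H^1_0(\Omega)$. By the construction of the fractional power spaces one has $\|u_0\|_{H_2}=\|\mathbf A u_0\|_{L^2}$ and $\|v_0\|_{H_1}=\|v_0\|_{H^1_0}$, so the previous estimate says exactly that $\|(u_0,v_0)\|_{Z_1}^2\le\tilde R_\epsilon$ on $\mathcal I$, i.e.\ $\mathcal I$ is a bounded subset of $Z_1$. I do not expect a genuine obstacle in this argument: the corollary is essentially bookkeeping once Theorem \ref{th1} is available, and the only step meriting attention is the remark that the constant furnished by that theorem is uniform over $\mathcal I$ because it is controlled solely by $R$ and by the common modulus of continuity $\omega(\cdot)$ — the substantive work having already been carried out in Theorem \ref{th1}, and in Lemma \ref{modcont}, which upgrades compactness of $\mathcal I$ to uniform equicontinuity of the full solutions it contains.
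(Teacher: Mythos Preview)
Your proposal is correct and follows exactly the route the paper indicates: the paper does not spell out a proof but states that the corollary is an easy consequence of Theorem~\ref{th1} and Lemma~\ref{modcont}, which is precisely what you do---use compactness for a uniform $R$, Lemma~\ref{modcont} for a common modulus of continuity $\omega$, then apply Theorem~\ref{th1} and exploit that $\tilde R_\epsilon$ depends only on $R$ and $\omega$.
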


Theorem \ref{th1} and Corollary \ref{cor1} furnish estimates which
depend heavily on $\epsilon$. In many situations it is of interest
to obtain estimates which are uniform in $\epsilon$. To this end,
we need to introduce the following hypothesis.
\begin{Hyp}\label{hyp3}\
\begin{enumerate}
\item $f\colon\Omega\times\R\to\R$ is such that, for every $u\in\R$, $f(\cdot,u)$ is measurable and $f(\cdot,0)\in L^2(\Omega)$;
\item for a.e. $x\in\Omega$, $f(x,\cdot)$ is of class $C^2$, $\partial_uf(\cdot,0)\in L^\infty(\Omega)$, $\partial_{uu}f(\cdot,0)\in L^\infty(\Omega)$
and there exists a constants $C>0$ such that $$ |\partial_{uu}
f(x,u_1)-\partial_{uu} f(x,u_2)|\leq C|u_1-u_2|. $$
\end{enumerate}\end{Hyp}
Notice that Hypothesis \ref{hyp3} is a strenghtening of Hypothesis
\ref{hyp2}. We have the following theorem.

\begin{Theorem}\label{th2} Assume that Hypotheses \ref{hyp1} and \ref{hyp3} are
satisfied. For every $\epsilon\in]0,1]$, let $(\bar
u_\epsilon(\cdot), \bar v_\epsilon(\cdot))\colon\R\to Z_0$ be a
bounded full solution of (\ref{eq2}), such that
$\sup_{t\in\R}(\|\bar u_\epsilon(t)\|_{H^1_0}^2+\epsilon\|\bar
v_\epsilon(t)\|_{L^2}^2)\leq R$. Assume that, for every
$\epsilon\in]0,1]$, the first component $\bar u_\epsilon(\cdot)$
is uniformly continuous. Then  there exists a positive constant
$\tilde R$ such that, for every $\epsilon\in]0,1]$,
\begin{equation*}
\sup_{t\in\R}(\|\mathbf A_0 \bar u_\epsilon(t)\|_{L^2}^2+\|\bar
v_\epsilon(t)\|_{H^1_0}^2+\epsilon\|(\partial_t\mid H_0)\bar
v_\epsilon(t)\|_{L^2}^2)\leq \tilde R.
\end{equation*}
The constant $\tilde R$ depends only on the constants in
Hypotheses \ref{hyp1} and \ref{hyp3} and on $R$.
\end{Theorem}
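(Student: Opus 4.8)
Here is the proof I would give.

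The plan is to run the argument of Theorem~\ref{th1} once more, but keeping every constant independent of $\epsilon$. Inspection of that proof shows that $\epsilon$ enters essentially at one point only: in formula (\ref{intro5}) the source $(0,(\theta/\epsilon)\bar v_\epsilon(p))$ is controlled through the crude a priori bound $\|\bar v_\epsilon(p)\|_{L^2}\le\sqrt{R/\epsilon}$, which deteriorates as $\epsilon\to0$. The new ingredient is a \emph{global-in-time} $L^2$ bound for $\bar v_\epsilon=\bar u_{\epsilon,t}$ that does not depend on $\epsilon$. To get it, note that by Theorem~\ref{th1} each $(\bar u_\epsilon,\bar v_\epsilon)$ is a strong solution, so the energy identity
\[
\frac{d}{dt}E_\epsilon(t)=-\|\bar v_\epsilon(t)\|_{L^2}^2,\qquad
E_\epsilon(t)=\frac\epsilon2\|\bar v_\epsilon(t)\|_{L^2}^2+\frac12\|\bar u_\epsilon(t)\|_{H^1_0}^2-\int_\Omega\hat F(\bar u_\epsilon(t))\,dx ,
\]
holds, where $F(x,u)=\int_0^u f(x,s)\,ds$. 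Integrating the pointwise bound $|\partial_u f(x,u)|\le\|\partial_u f(\cdot,0)\|_{L^\infty}+c(1+|u|^2)$ (implicit in Hypothesis~\ref{hyp3}(2)) gives $|F(x,u)|\le c\,|f(x,0)|\,|u|+c|u|^2+c|u|^4$, and since $H^1_0(\Omega)\hookrightarrow L^2(\Omega)\cap L^4(\Omega)$ for every $\Omega\subset\R^3$ (because $\lambda_1>0$ forces the norm $\|\cdot\|_{H^1_0}$ to dominate the full $H^1(\R^3)$ norm, and $H^1(\R^3)\hookrightarrow L^p$ for $2\le p\le6$), the a priori bound $\|\bar u_\epsilon(t)\|_{H^1_0}^2+\epsilon\|\bar v_\epsilon(t)\|_{L^2}^2\le R$ yields $|E_\epsilon(t)|\le E_0$ with $E_0=E_0(R)$ independent of $\epsilon$. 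As $E_\epsilon$ is bounded and non-increasing on $\R$, its derivative is integrable and $\int_\R\|\bar v_\epsilon(t)\|_{L^2}^2\,dt\le 2E_0$ uniformly in $\epsilon\in]0,1]$.

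With this in hand I would proceed exactly as in Theorem~\ref{th1}: $(\bar v_\epsilon,\bar v_{\epsilon,t})$ is a full solution of the linearized equation (\ref{intro2}); fixing $\theta=\theta(R)$ large and letting $\mathbf U_\epsilon(t,s)$ be the evolution system of the shifted equation (\ref{intro3}), one has, by (\ref{intro5}),
\[
(\bar v_\epsilon(t),\bar v_{\epsilon,t}(t))=\int_{-\infty}^t\mathbf U_\epsilon(t,p)\bigl(0,(\theta/\epsilon)\bar v_\epsilon(p)\bigr)\,dp .
\]
Here I invoke Theorem~\ref{decay}, using it in the following $\epsilon$-uniform form (which is exactly the point at which Hypothesis~\ref{hyp3}, rather than merely Hypothesis~\ref{hyp2}, is needed, since it is what keeps the constants measuring the time-regularity of the potential $t\mapsto\widehat{\partial_u f}(\bar u_\epsilon(t))$ and the extra terms produced on differentiating (\ref{intro2}) free of $\epsilon$): for $\theta$ large depending only on $R$ and the structural constants, $\mathbf U_\epsilon(t,s)$ decays exponentially in $Z_0$ and in $Z_{-1}$ with $\epsilon$-independent rate, and the Duhamel integral above obeys
\[
\|(\bar v_\epsilon(t),\bar v_{\epsilon,t}(t))\|_{Z_0}^2\le C\int_{-\infty}^t e^{-c(t-p)}\|\bar v_\epsilon(p)\|_{L^2}^2\,dp ,\qquad C,c>0\ \text{independent of }\epsilon .
\]
The factor $\theta/\epsilon$ must be absorbed here: estimating the integrand pointwise by $Ce^{-c(t-p)}(\theta/\epsilon)\|\bar v_\epsilon(p)\|_{L^2}$, as one does in Theorem~\ref{th1}, blows up as $\epsilon\to0$, while the integral does not — at the linear level this cancellation reflects the identity $\int_0^\infty\mathbf T_{\epsilon,0}(\tau)(0,(1/\epsilon)\,\cdot\,)\,d\tau=(\mathbf A_0^{-1}\,\cdot\,,0)$. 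Combining this with the first step gives
\[
\sup_{t\in\R}\|(\bar v_\epsilon(t),\bar v_{\epsilon,t}(t))\|_{Z_0}^2\le C\int_\R\|\bar v_\epsilon(p)\|_{L^2}^2\,dp\le 2CE_0=:\tilde R_1 ,
\]
independent of $\epsilon$; in particular $\sup_t\|\bar v_\epsilon(t)\|_{H^1_0}^2\le\tilde R_1$ and $\sup_t\epsilon\|\bar v_{\epsilon,t}(t)\|_{L^2}^2\le\tilde R_1$.

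It remains to recover $\bar u_\epsilon$. Again by Theorem~\ref{th1}, $(\bar u_\epsilon,\bar v_\epsilon)$ is continuously differentiable into $Z_0$ and $\mathbf A_0\bar u_\epsilon(t)=\hat f(\bar u_\epsilon(t))-\bar v_\epsilon(t)-\epsilon\bar v_{\epsilon,t}(t)$ in $L^2(\Omega)$. By (\ref{n1}), $\|\hat f(\bar u_\epsilon(t))\|_{L^2}\le\tilde C(1+R^{3/2})$; by the preceding step $\|\bar v_\epsilon(t)\|_{L^2}\le C\|\bar v_\epsilon(t)\|_{H^1_0}\le C\sqrt{\tilde R_1}$ and $\|\epsilon\bar v_{\epsilon,t}(t)\|_{L^2}=\sqrt\epsilon\,\bigl(\sqrt\epsilon\,\|\bar v_{\epsilon,t}(t)\|_{L^2}\bigr)\le\sqrt\epsilon\,\sqrt{\tilde R_1}\le\sqrt{\tilde R_1}$ since $\epsilon\le1$. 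Hence $\sup_t\|\mathbf A_0\bar u_\epsilon(t)\|_{L^2}^2\le\tilde R_2$, independent of $\epsilon$, and the theorem follows with, say, $\tilde R=2\tilde R_1+\tilde R_2$.

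The main obstacle is plainly the $\epsilon$-uniform version of Theorem~\ref{decay} used in the second step. Because $f$ has critical growth, $\widehat{\partial_u f}(\bar u_\epsilon)v$ in (\ref{intro2}) is not of lower order — by (\ref{n2}) one only has $\|\widehat{\partial_u f}(\bar u_\epsilon)v\|_{L^2}\le\tilde C(1+\|\bar u_\epsilon\|_{H^1_0}^2)\|v\|_{H^1_0}$ — so a direct energy estimate in $Z_0$ produces exponential growth, not decay, no matter how large $\theta$ is. One is forced to work first in the subcritical topology $Z_{-1}=L^2\times H^{-1}$, where by (\ref{n4}) the nonlinearity gains a derivative ($\|\widehat{\partial_u f}(\bar u_\epsilon)v\|_{H^{-1}}\le\tilde C(1+\|\bar u_\epsilon\|_{H^1_0}^2)\|v\|_{L^2}$), with a carefully modified energy and the shift $\theta$, and then to transfer the decay to $Z_0$; carrying this out with all constants independent of $\epsilon$, and making the spurious $\theta/\epsilon$ disappear in the Duhamel estimate, is where the functional-analytic work of Sections~3 and~4 is spent and where Hypothesis~\ref{hyp3} is genuinely used. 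Once that is available, Theorem~\ref{th2} is only the bookkeeping above.
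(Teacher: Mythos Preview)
Your opening and closing steps are correct and match the paper: the energy identity yields the uniform-in-$\epsilon$ bound $\int_\R\|\bar v_\epsilon\|_{L^2}^2\,dt\le K$ (this is exactly Lemma~\ref{integral}), and once $\sup_t\|\bar v_\epsilon\|_{H^1_0}^2+\epsilon\|\bar w_\epsilon\|_{L^2}^2$ is controlled, $\|\mathbf A_0\bar u_\epsilon\|_{L^2}$ is recovered from the equation just as you say.

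The gap is in the middle step. What you call ``the $\epsilon$-uniform form of Theorem~\ref{decay}'' is the displayed inequality
\[
\|(\bar v_\epsilon(t),\bar w_\epsilon(t))\|_{Z_0}^2\le C\int_{-\infty}^t e^{-c(t-p)}\|\bar v_\epsilon(p)\|_{L^2}^2\,dp,
\]
but Proposition~\ref{decay} is an operator-norm bound $\|\mathbf U_{\epsilon,0}[\theta](t,s)\|_{\mathcal L(Z_{\epsilon,0}[\theta])}\le Me^{-\delta(t-s)}$, and plugging it into the Duhamel formula gives only
\[
\|(\bar v_\epsilon(t),\bar w_\epsilon(t))\|_{Z_{\epsilon,0}[\theta]}\le \frac{M\theta}{\sqrt\epsilon}\int_{-\infty}^t e^{-\delta(t-p)}\|\bar v_\epsilon(p)\|_{L^2}\,dp,
\]
still singular as $\epsilon\to 0$. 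No bound on $\|\mathbf U_\epsilon\|$ alone can convert this linear estimate into your quadratic one; the identity $\int_0^\infty\mathbf T_{\epsilon,0}[\theta](\tau)(0,(1/\epsilon)w)\,d\tau=((\mathbf A_0[\theta])^{-1}w,0)$ is correct but concerns the autonomous semigroup with constant source and does not transfer to the non-autonomous $\mathbf U_\epsilon$ with time-dependent $\bar v_\epsilon(p)$. There is a second obstruction: the constants in Proposition~\ref{decay} depend on the modulus of continuity $\omega$ of $\bar u$, and Theorem~\ref{th2} does \emph{not} assume these moduli are uniform in $\epsilon$, so even the $\epsilon$-uniform operator-norm decay you invoke is not available from that proof.

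The paper's actual argument abandons the evolution system altogether in Section~4 and instead runs a direct energy estimate on $(\bar v_\epsilon,\bar w_\epsilon)$ with the \emph{modified} functional
\[
\Lambda_{\epsilon,\theta}(t)=\tilde E_{\epsilon,\theta}(\bar v_\epsilon(t),\bar w_\epsilon(t))+\tfrac12\int_\Omega\widehat{\partial_u f}(\bar u_\epsilon(t))\,|\bar v_\epsilon(t)|^2\,dx.
\]
The added term is precisely what makes the critical contribution $\langle\bar w_\epsilon,\widehat{\partial_u f}(\bar u_\epsilon)\bar v_\epsilon\rangle_{L^2}$ disappear from $\frac{d}{dt}\Lambda$; its own time derivative produces instead the cubic remainder $\tfrac12\int\widehat{\partial_{uu} f}(\bar u_\epsilon)\,\bar v_\epsilon|\bar v_\epsilon|^2$, and \emph{this} is where Hypothesis~\ref{hyp3} is genuinely used (to differentiate and then bound $|\partial_{uu}f|\le C(1+|u|)$), not for time-regularity of the potential as you suggest. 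That remainder is estimated by $C\|\bar v_\epsilon\|_{L^2}\|\bar v_\epsilon\|_{H^1_0}^2\le \nu\Lambda+C_\nu\|\bar v_\epsilon\|_{L^2}^2\Lambda$, yielding
\[
\frac{d}{dt}\Lambda_{\epsilon,\theta}+\delta\Lambda_{\epsilon,\theta}\le \tfrac{\theta^2}{2}\|\bar v_\epsilon\|_{L^2}^2+2C_\nu\|\bar v_\epsilon\|_{L^2}^2\,\Lambda_{\epsilon,\theta}.
\]
Now Gronwall with the integrating factor $\exp(\delta t-2C_\nu\int^t\|\bar v_\epsilon\|_{L^2}^2)$ and Lemma~\ref{integral} give the $\epsilon$-free bound. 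In effect, the paper \emph{proves} a version of your quadratic inequality, but by this energy trick rather than by estimating $\mathbf U_\epsilon$; your proposal identifies the right ingredients but defers exactly the step that carries the argument.
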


One has also the following corollary.

\begin{Corol}\label{cor2}
Assume that Hypotheses \ref{hyp1} and \ref{hyp3} are satisfied.
For every $\epsilon\in]0,1]$, let $\mathcal I_\epsilon$ be a
compact invariant subset of the local semiflow generated by
(\ref{eq2})
 in $Z_0$. Assume that there exists $R>0$ such that,
for every $\epsilon\in]0,1]$,
\begin{equation*}
\sup_{(u,v)\in\mathcal I_\epsilon}(\| u\|_{H^1_0}^2+\epsilon\|
v\|_{L^2}^2)\leq R.
\end{equation*}
Then there exists $\tilde R>0$ such that, for every
$\epsilon\in]0,1]$,
\begin{equation*}
\sup_{(u,v)\in\mathcal I_\epsilon}(\|\mathbf A_0 u\|_{L^2}^2+\|
v\|_{H^1_0}^2)\leq \tilde R.
\end{equation*}
The constant $\tilde R$ depends only on the constants in
Hypotheses \ref{hyp1} and \ref{hyp3} and on $R$.\qed
\end{Corol}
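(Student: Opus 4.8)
The plan is to deduce Corollary \ref{cor2} from Theorem \ref{th2} in exactly the way Corollary \ref{cor1} is deduced from Theorem \ref{th1}: translate the invariance of the sets $\mathcal I_\epsilon$ into the existence of bounded full solutions, and use Lemma \ref{modcont} to supply the uniform-in-time continuity that Theorem \ref{th2} requires.

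I would first fix $\epsilon\in]0,1]$ with $\mathcal I_\epsilon\neq\emptyset$ and a point $(u_0,v_0)\in\mathcal I_\epsilon$. By invariance there is a full solution $(\bar u_\epsilon(\cdot),\bar v_\epsilon(\cdot))\colon\R\to Z_0$ of (\ref{eq2}) with $(\bar u_\epsilon(0),\bar v_\epsilon(0))=(u_0,v_0)$ and $(\bar u_\epsilon(t),\bar v_\epsilon(t))\in\mathcal I_\epsilon$ for all $t\in\R$. Since $\|u\|_{H^1_0}^2+\epsilon\|v\|_{L^2}^2\leq R$ for every $(u,v)\in\mathcal I_\epsilon$, this solution obeys $\sup_{t\in\R}(\|\bar u_\epsilon(t)\|_{H^1_0}^2+\epsilon\|\bar v_\epsilon(t)\|_{L^2}^2)\leq R$. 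Applying Lemma \ref{modcont} to the compact invariant set $\mathcal I_\epsilon$, the full solutions in $\mathcal I_\epsilon$ form a uniformly equicontinuous family, so in particular $\bar u_\epsilon(\cdot)$ is uniformly continuous; note that its modulus of continuity may depend on $\epsilon$, but Theorem \ref{th2} does not demand a common modulus.

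Next I would select, for each $\epsilon\in]0,1]$, one such full solution $(\bar u_\epsilon(\cdot),\bar v_\epsilon(\cdot))$ lying in $\mathcal I_\epsilon$, taking the one through $(u_0,v_0)$ for the $\epsilon$ fixed above (values of $\epsilon$ with $\mathcal I_\epsilon=\emptyset$ can be ignored). This family meets all the hypotheses of Theorem \ref{th2}, which then produces a constant $\tilde R>0$, depending only on the constants in Hypotheses \ref{hyp1} and \ref{hyp3} and on $R$, with $\sup_{t\in\R}(\|\mathbf A_0\bar u_\epsilon(t)\|_{L^2}^2+\|\bar v_\epsilon(t)\|_{H^1_0}^2)\leq\tilde R$ for every $\epsilon$. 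Evaluating at $t=0$ gives $\|\mathbf A_0 u_0\|_{L^2}^2+\|v_0\|_{H^1_0}^2\leq\tilde R$, and since $(u_0,v_0)\in\mathcal I_\epsilon$ and $\epsilon$ were arbitrary while $\tilde R$ depends on neither, the asserted bound follows.

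I do not expect any serious obstacle; the only point requiring attention is the arrangement of quantifiers — Theorem \ref{th2} must be applied to a family indexed by $\epsilon$ rather than separately for each point of each $\mathcal I_\epsilon$, which is legitimate precisely because the constant it delivers is uniform in $\epsilon$. All the substantive analysis has already been carried out in Theorem \ref{th2}, and through it in Theorem \ref{th1} and in the decay estimates of Theorem \ref{decay}.
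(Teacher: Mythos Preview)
Your proposal is correct and is precisely the argument the paper has in mind: the corollary is stated with a \qed and no proof, being an immediate consequence of Theorem \ref{th2} combined with Lemma \ref{modcont}, exactly as Corollary \ref{cor1} follows from Theorem \ref{th1}. Your observation that the modulus of continuity supplied by Lemma \ref{modcont} may vary with $\epsilon$, while Theorem \ref{th2} yields a bound $\tilde R$ independent of that modulus, is exactly the point that makes the deduction work uniformly in $\epsilon$.
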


\section{Proof of Theorem 1}

Throughout this section we fix $\epsilon\in]0,1]$ and we denote by $(\bar u(\cdot), \bar v(\cdot))\colon\R\to Z_0$  a fixed bounded
full solution of (\ref{eq2}), such that $\sup_{t\in\R}(\|\bar
u(t)\|_{H^1_0}^2+\epsilon\|\bar v(t)\|_{L^2}^2)\leq R$. As we have
seen above, $(\bar u(\cdot),\bar v(\cdot))$ is continuously
differentiable into $Z_{-1}$ and
\begin{equation*}\begin{cases}
(\partial_t\mid H_0)\bar u(t)=\bar v(t)\\ \epsilon(\partial_t\mid
H_{-1})\bar v(t)=-\bar v(t)-\mathbf A_{-1}\bar u(t)+\hat f(\bar
u(t))
\end{cases}
\end{equation*}
Set $\bar w(t):=(\partial_t\mid H_{-1})\bar v(t)$, $t\in\R$. Using
(\ref{n6}) we see that $(\bar v(\cdot),\bar w(\cdot))$ is
continuous into $Z_{-1}$ and continuously differentiable into
$Z_{-2}$, and
\begin{equation*}\begin{cases}
(\partial_t\mid H_{-1})\bar v(t)=\bar w(t)\\
\epsilon(\partial_t\mid H_{-2})\bar w(t)=-\bar w(t)-\mathbf
A_{-2}\bar v(t)+\widehat{\partial_u f}(\bar u(t))\cdot\bar v(t)
\end{cases}
\end{equation*}
Since the mapping $t\mapsto(0, \widehat{\partial_u f}(\bar
u(t))\cdot\bar v(t))$ is continuous into $Z_{-1}=D(\mathbf
B_{\epsilon,-2})$, it follows from Theorem II.1.3 in \cite{Gold}
that, for $s$, $t\in\R$, with $s\leq t$, $(\bar v(\cdot),\bar
w(\cdot))$ satisfies the equality
\begin{multline*}
(\bar v(t),\bar w(t))=\mathbf T_{\epsilon,-2}(t-s)(\bar v (s),\bar
w (s))\\+\int_s^t \mathbf
T_{\epsilon,-2}(t-p)(0,(1/\epsilon)\widehat{\partial_u f}(\bar
u(p))\cdot\bar v(p))\,(dp\mid Z_{-2}).
\end{multline*}
Finally, since $(\bar v(\cdot),\bar w(\cdot))$ is continuous into
$Z_{-1}$, it follows  that $(\bar v(\cdot),\bar w(\cdot))$
satisfies the equality
\begin{multline}\label{k1}
(\bar v(t),\bar w(t))=\mathbf T_{\epsilon,-1}(t-s)(\bar v (s),\bar
w (s))\\+\int_s^t \mathbf
T_{\epsilon,-1}(t-p)(0,(1/\epsilon)\widehat{\partial_u f}(\bar
u(p))\cdot\bar v(p))\,(dp\mid Z_{-1}).
\end{multline}
Notice that, for $s\in\R$ fixed, the function $(\bar v(\cdot),\bar
w(\cdot))$ is the unique mild solution of (\ref{k1}) on
$[s,+\infty[$. This is a consequence of (\ref{n4}) and (\ref{n5}).
Now we want to give another representation of $(\bar v(\cdot),\bar
w(\cdot))$, by mean of a {\em variation of constant formula}
involving the evolution system generated by the non-autonomous
equation
\begin{equation}\label{na1}
\partial_t(v(t),w(t))=\mathbf
B_{\epsilon}(v(t),w(t))+(0,(1/\epsilon)(-\theta +\widehat{
\partial_u f}(\bar u(t)))\cdot v(t))
\end{equation}
in the space $Z_{-1}$, where $\theta$ is a sufficiently large
positive number to be determined.

\begin{Def}\label{def2} Let $X$ be a Banach space and let $J\subset \R$ be an interval.
A two parameter family of bounded linear operators $U(t,s)$, $s$, $t\in J$, $s\leq t$, is called
an {\em evolution system} on $X$ iff the following conditions are satisfied:
\begin{enumerate}
\item $U(s,s)=I$, $U(t,r)U(r,s)=U(t,s)$ for $s$, $r$, $t\in J$, $s\leq r\leq t$;
\item $(t,s)\mapsto U(t,s)$ is strongly continuous into $\mathcal L(X)$ for $s$, $t\in J$, $s\leq t$.
\end{enumerate}
\end{Def}

We recall the fundamental theorem of Kato (see \cite{K}), which provides sufficient conditions for the existence of an evolution system.
Let $X$ be a Banach space. We denote by $\mathcal G(X)$ the set of all infinitesimal generetors of $C^0$-semigroups of linear operators on $X$.

\begin{Def} \label{def3} Let $X$ be a Banach space and let $J\subset \R$ be an interval.
A one parameter family of  linear operators $A(t)\in \mathcal G(X)$, $t\in J$, is called
 {\em stable} iff there are constants $M>0$, $\beta\in\R$  (called the {\em constants of stabilty}) such that
\begin{equation*}
\|\prod_{j=1}^k(A(t_j)+\lambda)^{-1}  \|_{\mathcal L(X)}\leq M(\lambda-\beta)^{-k}, \quad \lambda>\beta,
\end{equation*}
for any finite family $(t_j)_{j=1}^k$ of points of $J$, with $t_1\leq t_2\leq\cdots\leq t_k$, $k\in\N$.
\end{Def}

\begin{Theorem}[{\bf Theor. 6.1 in \cite{K}} ]\label{thkato}
Let $X$ and $Y$ be Banach spaces, such that $Y$ is densely and continuously embedded in $X$. Let $A(t)$, $t\in \mathcal G(X)$  be a family of linear operators such that:
\begin{enumerate}
\item $(A(t))_{t\in J}$ is stable with constants $M$ and $\beta$;
\item there is a family $(S(t))_{t\in J}$ of isomorphisms of $Y$ to $X$ such that $S(\cdot)$ is strongly continuously differentiable into $\mathcal L(Y,X)$ and
\begin{equation*} S(t)A(t)S(t)^{-1}=A(t)+B(t), \quad B(t)\in\mathcal L(X),
\end{equation*}
where $B(\cdot)$ is strongly continuous into $\mathcal L(X)$;
\item $Y\subset D(A(t))$, so that $A(t)\in\mathcal L(Y,X)$ for $t\in J$, and the map $t\mapsto A(t)$ is norm continuous into $\mathcal L(Y,X)$.
\end{enumerate}
Under these conditions, there exists a unique evolution system $U(t,s)$ on $X$, defined for $s$, $t\in J$, $s\leq t$, with the following properties:
\begin{enumerate}
\item $\|U(t,s)\|_{\mathcal L(X)}\leq Me^{\beta(t-s)}$;
\item $U(t,s)Y\subset Y$ and  $\|U(t,s)|_Y\|_{\mathcal L(Y)}\leq \tilde Me^{\tilde \beta(t-s)}$ for some constants $\tilde M>0$, $\tilde\beta\in\R$;
\item the map $(s,t)\mapsto U(t,s)|_Y$ is strongly continuous in $\mathcal L(Y)$ for $s$, $t\in J$, $s\leq t$;
\item for each fixed $y\in Y$ and $t\in J$, the mapping $s\mapsto U(t,s)y$ is continuously differentiable in $X$ and $(d/ds)U(t,s)y=-U(t,s)A(s)y$, $s\leq t$;
\item for each fixed $y\in Y$ and $s\in J$, the mapping $t\mapsto U(t,s)y$ is continuously differentiable in $X$ and $(d/dt)U(t,s)y=A(t)U(t,s)y$, $s\leq t$.
\end{enumerate}\qed
\end{Theorem}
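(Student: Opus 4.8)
We would prove this along the classical lines of Kato \cite{K}. The plan is the standard Yosida-approximation scheme: replace each $A(t)$ by a bounded approximant $A_\lambda(t)$, solve the resulting bounded non-autonomous Cauchy problem — whose evolution system exists by elementary means — and pass to the limit $\lambda\to\infty$, using hypotheses (ii) and (iii) precisely to control the approximants on the smaller space $Y$ and uniformly in $\lambda$. It suffices to work on a fixed bounded subinterval $[s_0,t_0]\subset J$. For $\lambda$ large let $A_\lambda(t)$ be the Yosida approximation of $A(t)$: each $A_\lambda(t)$ is bounded, $A_\lambda(t)x\to A(t)x$ in $X$ for each $t$ and each $x\in D(A(t))$, and by hypothesis (iii) the map $t\mapsto A_\lambda(t)$ is norm continuous into $\mathcal L(X)$, $\sup_t\|A(t)\|_{\mathcal L(Y,X)}<\infty$, and the convergence $A_\lambda(t)y\to A(t)y$ is uniform in $t\in[s_0,t_0]$ when $y$ stays in a bounded subset of $Y$. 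From the stability of $(A(t))_t$ one deduces that $(A_\lambda(t))_t$ is stable with constants $M$ and $\beta_\lambda$, where $\beta_\lambda\to\beta$ as $\lambda\to\infty$.

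For a bounded, norm-continuous family $A_\lambda(\cdot)$ the Cauchy problem admits a unique evolution system $U_\lambda(t,s)$, obtainable from the Peano--Baker series or from the associated integral equation. Approximating $A_\lambda(\cdot)$ on $[s,t]$ by families that are piecewise constant with ordered nodes and invoking the product-of-resolvents estimate in the definition of stability (via the exponential formula), one gets in the limit the uniform bound $\|U_\lambda(t,s)\|_{\mathcal L(X)}\le Me^{\beta_\lambda(t-s)}$ for $s\le t$. Now hypothesis (ii) enters: since $S(\cdot)$ is strongly $C^1$ into $\mathcal L(Y,X)$, differentiating shows that $S(t)U_\lambda(t,s)S(s)^{-1}$ is the evolution system generated by $\tilde A_\lambda(t):=S(t)A_\lambda(t)S(t)^{-1}$, and using $S(t)A(t)S(t)^{-1}=A(t)+B(t)$ together with the resolvent identity one verifies that $\tilde A_\lambda(t)=A_\lambda(t)+B_\lambda(t)$ with $\sup_{\lambda,t}\|B_\lambda(t)\|_{\mathcal L(X)}<\infty$. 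A stable family perturbed by a uniformly bounded family remains uniformly stable, so $\|S(t)U_\lambda(t,s)S(s)^{-1}\|_{\mathcal L(X)}\le\tilde Me^{\tilde\beta(t-s)}$; since $S(t)$ is an isomorphism of $Y$ onto $X$, this yields $\|U_\lambda(t,s)\|_{\mathcal L(Y)}\le Ce^{\tilde\beta(t-s)}$ uniformly in $\lambda$.

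For $y\in Y$ and $\lambda$, $\mu$ large, the identity
\begin{equation*}
U_\lambda(t,s)y-U_\mu(t,s)y=\int_s^t U_\lambda(t,r)\bigl(A_\lambda(r)-A_\mu(r)\bigr)U_\mu(r,s)y\,(dr\mid X),
\end{equation*}
together with the facts that $r\mapsto U_\mu(r,s)y$ stays in a bounded subset of $Y$ and that $(A_\lambda(r)-A_\mu(r))z\to 0$ in $X$ uniformly for $z$ in bounded subsets of $Y$ and $r\in[s,t]$, shows that $U_\lambda(t,s)y$ is Cauchy in $X$, uniformly for $(s,t)$ in bounded subsets of $\{s\le t\}$. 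With the uniform bound $\|U_\lambda(t,s)\|_{\mathcal L(X)}\le Me^{\beta_\lambda(t-s)}$ and density of $Y$ in $X$ one obtains strong convergence $U_\lambda(t,s)\to U(t,s)$ in $\mathcal L(X)$, locally uniformly in $(s,t)$. Properties (1), (2), (3) pass to the limit (with $M$ and $\beta$ after $\lambda\to\infty$); $U(t,s)Y\subset Y$ with the bound in (2) follows from the uniform $Y$-bound and the fact that a norm-limit in $X$ of a bounded sequence in $Y$ lies in $Y$ with no larger norm, and strong continuity on $Y$ then follows. For (4) and (5), for $y\in Y$ one has $(d/dr)U_\lambda(r,s)y=A_\lambda(r)U_\lambda(r,s)y$ and $(d/ds)U_\lambda(t,s)y=-U_\lambda(t,s)A_\lambda(s)y$; since $A_\lambda(r)U_\lambda(r,s)y\to A(r)U(r,s)y$ in $X$ locally uniformly (using $Y$-continuity of $r\mapsto U(r,s)y$, $\sup_r\|A(r)\|_{\mathcal L(Y,X)}<\infty$ and the uniform convergence $A_\lambda\to A$ on bounded subsets of $Y$), one may integrate and pass to the limit to get $U(t,s)y-y=\int_s^t A(r)U(r,s)y\,(dr\mid X)$ and its backward analogue, whence (4) and (5). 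Uniqueness is standard: a second such $V$ satisfies $(d/dr)\,V(t,r)U(r,s)y=0$ in $X$ for every $y\in Y$, so $V=U$ on the dense set $Y$ and hence on $X$.

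The crux of the argument is the second step: turning the single qualitative hypothesis (ii) into a bound for $U_\lambda(t,s)$ on $Y$ that is uniform in the regularisation parameter $\lambda$. One has to differentiate $S(t)U_\lambda(t,s)S(s)^{-1}$ correctly — bearing in mind that $S(\cdot)$ is only $C^1$ into $\mathcal L(Y,X)$, not into $\mathcal L(X)$ or $\mathcal L(Y)$ — identify the perturbation $B_\lambda(t)$, and check that the Yosida regularisation is compatible with conjugation by $S(t)$, i.e. that $S(t)A_\lambda(t)S(t)^{-1}-A_\lambda(t)$ stays bounded in $\mathcal L(X)$ uniformly in $\lambda$ and $t$. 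The remaining delicate points — the several uniformities in the time variable used throughout — all reduce to the norm continuity of $t\mapsto A(t)$ in $\mathcal L(Y,X)$ provided by hypothesis (iii).
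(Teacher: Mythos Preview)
The paper does not prove this theorem: it is stated as Theorem 6.1 of Kato \cite{K} and closed with a \qed, so there is no ``paper's own proof'' to compare against. Your sketch follows the classical Yosida-approximation argument of Kato's original paper --- regularise $A(t)$ by $A_\lambda(t)$, build $U_\lambda$, use the conjugation by $S(t)$ to transfer the stability bound to $Y$, and pass to the limit --- which is precisely the proof being cited; the one place you correctly flag as delicate (checking that $S(t)A_\lambda(t)S(t)^{-1}-A_\lambda(t)$ remains uniformly bounded in $\mathcal L(X)$ despite $S$ being only $C^1$ into $\mathcal L(Y,X)$) is indeed where the work lies in \cite{K}.
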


In order to exploit Kato's theorem, we need to introduce some
notation.
 For $\kappa\in\R$ and $\theta\geq 0$,
define $\mathbf A_\kappa[\theta]:=\mathbf A_\kappa+\theta\mathbf
I\colon H_{\kappa+2}\to H_{\kappa}$.
For $\epsilon\in]0,1]$, $\kappa\in\R$ and $\theta\geq 0$, define
the linear operator $\mathbf B_{\epsilon,\kappa}[\theta]\colon
Z_{\kappa+1}\to Z_{\kappa}$ by
\begin{equation*}
\mathbf
B_{\epsilon,\kappa}[\theta](u,v):=(v,-(1/\epsilon)(v+\mathbf
A_\kappa[\theta] u)),\quad (u,v)\in Z_{\kappa+1}.
\end{equation*}
It follows that $\mathbf B_{\epsilon,\kappa}[\theta]$ is the
infinitesimal generator of a $C^0$-semigroup $\mathbf
T_{\epsilon,\kappa}[\theta](t)$, $t\in[0,+\infty[$, on
$Z_{\kappa}$.

For $t\in\R$, define the operator $\mathbf
C_{\epsilon,-1}(t)\colon Z_{-1}\to Z_{-1}$ by
\begin{equation*}
\mathbf
C_{\epsilon,-1}(t)(u,v):=(0,(1/\epsilon)\widehat{\partial_u
f}(\bar u(t))\cdot u).
\end{equation*}
Notice that, by (\ref{n5}), the mapping $t\mapsto\mathbf
C_{\epsilon,-1}(t)$ is norm continuous into $\mathcal L(Z_{-1})$.
Moreover, by (\ref{n2}), $\mathbf C_{\epsilon,-1}(t)$ maps $Z_0$
into itself. Setting $\mathbf C_{\epsilon,0}(t):=\mathbf
C_{\epsilon,-1}(t)|_{Z_0}$, we get from (\ref{n3}) that the
mapping $t\mapsto\mathbf C_{\epsilon,0}(t)$ is norm continuous
into $\mathcal L(Z_{0})$.

\begin{Prop}\label{verkato} Let $\theta\geq 0$. Set $X:=Z_{-1}$, $Y:=Z_0$, $A(t):=\mathbf
B_{\epsilon,-1}[\theta]+\mathbf C_{\epsilon,-1}(t)$ and
$S(t):=(\mathbf B_{\epsilon,-1}[\theta])^{-1}$, $t\in\R$. Then the
assumptions of Theorem \ref{thkato} are satisfied.
\end{Prop}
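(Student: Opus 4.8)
The plan is to verify, one at a time, the three numbered hypotheses of Kato's theorem (Theorem~\ref{thkato}) for the data $X=Z_{-1}$, $Y=Z_0$, $A(t)=\mathbf B_{\epsilon,-1}[\theta]+\mathbf C_{\epsilon,-1}(t)$ and $S(t)=(\mathbf B_{\epsilon,-1}[\theta])^{-1}$, leaning entirely on the mapping and continuity properties of $\mathbf C_{\epsilon,-1}(t)$ recorded just before the statement. Hypothesis (3) is essentially bookkeeping: since $\mathbf C_{\epsilon,-1}(t)\in\mathcal L(Z_{-1})$ one has $D(A(t))=D(\mathbf B_{\epsilon,-1}[\theta])=Z_0=Y$, and $\mathbf B_{\epsilon,-1}[\theta]\colon Z_0\to Z_{-1}$ is bounded by its very definition, so $A(t)\in\mathcal L(Y,X)$; moreover $t\mapsto A(t)$ is norm continuous into $\mathcal L(Y,X)$ because its first summand is constant in $t$ while $t\mapsto\mathbf C_{\epsilon,-1}(t)$ is norm continuous into $\mathcal L(Z_{-1})$ (by (\ref{n5})), hence a fortiori into $\mathcal L(Z_0,Z_{-1})$, the embedding $Z_0\hookrightarrow Z_{-1}$ being continuous.

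For hypothesis (1), I would argue by perturbing a single generator. The operator $\mathbf B_{\epsilon,-1}[\theta]$ generates the $C^0$-semigroup $\mathbf T_{\epsilon,-1}[\theta](t)$, so $\|\mathbf T_{\epsilon,-1}[\theta](t)\|_{\mathcal L(Z_{-1})}\le M_0e^{\beta_0 t}$ for suitable $M_0\ge 1$, $\beta_0\in\R$; in particular the constant family $\{\mathbf B_{\epsilon,-1}[\theta]\}_{t\in\R}$ is stable with constants $M_0$, $\beta_0$. By (\ref{n4}) together with the a priori bound $\sup_t\|\bar u(t)\|_{H^1_0}^2\le R$, the perturbation is uniformly bounded, $c:=\sup_{t\in\R}\|\mathbf C_{\epsilon,-1}(t)\|_{\mathcal L(Z_{-1})}<\infty$. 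The standard perturbation theorem for stable families (adding to a stable family bounded operators of uniformly bounded norm preserves stability, only the exponent changing) then yields that $\{A(t)\}_{t\in\R}$ is stable, with constants $M_0$ and $\beta_0+M_0c$.

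Hypothesis (2) is the heart of the matter. The family $S(t)$ is independent of $t$, hence trivially strongly continuously differentiable with vanishing derivative, and it is an isomorphism between $Z_{-1}$ and $Z_0$ because $\mathbf B_{\epsilon,-1}[\theta]\colon Z_0\to Z_{-1}$ is one; the latter is checked by solving $\mathbf B_{\epsilon,-1}[\theta](u,v)=(g,h)$ explicitly, which reduces to inverting $\mathbf A_{-1}[\theta]=\mathbf A_{-1}+\theta\mathbf I\colon H_1\to H_{-1}$, an isomorphism for every $\theta\ge 0$ since $\mathbf A$ is positive selfadjoint with $\lambda_1>0$. It then remains to compute the similarity transform. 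Working first on a suitable dense subspace (e.g. $Z_1$), on which all the terms below are classically meaningful, and afterwards extending the identity to $D(A(t))=Z_0$ by closedness, one finds that the transformation associated with $S(t)$ carries $A(t)$ into $A(t)+B(t)$ with
\begin{equation*}
B(t)=\mathbf B_{\epsilon,-1}[\theta]\,\mathbf C_{\epsilon,-1}(t)\,(\mathbf B_{\epsilon,-1}[\theta])^{-1}-\mathbf C_{\epsilon,-1}(t).
\end{equation*}
The second term lies in $\mathcal L(Z_{-1})$ by definition; for the first, the decisive point — and this is where (\ref{n2}) enters — is that $\mathbf C_{\epsilon,-1}(t)$ leaves the smaller space $Z_0$ invariant, so that $\mathbf C_{\epsilon,0}(t):=\mathbf C_{\epsilon,-1}(t)|_{Z_0}\in\mathcal L(Z_0)$, and since $(\mathbf B_{\epsilon,-1}[\theta])^{-1}\colon Z_{-1}\to Z_0$ and $\mathbf B_{\epsilon,-1}[\theta]\colon Z_0\to Z_{-1}$ are bounded, the composite $\mathbf B_{\epsilon,-1}[\theta]\,\mathbf C_{\epsilon,0}(t)\,(\mathbf B_{\epsilon,-1}[\theta])^{-1}$ is bounded on $Z_{-1}$, which is exactly the first term of $B(t)$. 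Strong continuity of $t\mapsto B(t)$ in $\mathcal L(Z_{-1})$ is then obtained by conjugating with the fixed operators $\mathbf B_{\epsilon,-1}[\theta]$ and $(\mathbf B_{\epsilon,-1}[\theta])^{-1}$ the norm continuity of $t\mapsto\mathbf C_{\epsilon,-1}(t)$ in $\mathcal L(Z_{-1})$ and of $t\mapsto\mathbf C_{\epsilon,0}(t)$ in $\mathcal L(Z_0)$, the latter being exactly (\ref{n3}).

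I expect the main obstacle to be the similarity identity in hypothesis (2): the difficulty is not the algebraic form of $B(t)$ but making the computation rigorous — pinning down a dense domain on which the transformed operator is genuinely defined and coincides with $A(t)+B(t)$, and then arguing that, once $B(t)$ is known to be bounded on $X$, this identity persists on all of $D(A(t))=Z_0$. Everything else is a routine consequence of the mapping and continuity properties of $\mathbf C_{\epsilon,-1}(t)$ furnished by Proposition~\ref{prop1}.
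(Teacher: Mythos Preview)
Your argument is correct and follows the same line as the paper's proof: stability via the bounded-perturbation theorem (the paper cites Prop.~3.5 in \cite{K}), norm continuity of $A(\cdot)$ via (\ref{n5}), and the same formula $B(t)=\mathbf B_{\epsilon,-1}[\theta]\,\mathbf C_{\epsilon,-1}(t)\,(\mathbf B_{\epsilon,-1}[\theta])^{-1}-\mathbf C_{\epsilon,-1}(t)$. The only difference is cosmetic: the paper writes out $(\mathbf B_{\epsilon,-1}[\theta])^{-1}(u,v)=(-(\mathbf A_{-1}[\theta])^{-1}(\epsilon v+u),u)$ and the resulting explicit form of the first summand of $B(t)$, then reads off boundedness and strong continuity from (\ref{n2}) and (\ref{n3}); you instead observe abstractly that $\mathbf C_{\epsilon,-1}(t)$ preserves $Z_0$ and compose bounded maps, which uses exactly the same ingredients.
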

\begin{proof} The stability of the family $A(t)$ follows from Prop. 3.5 in \cite{K}. The norm continuity of the mapping $t\mapsto A(t)$
is a consequence of (\ref{n5}). In order to conclude, we shall compute explicitly $S(t)A(t)S(t)^{-1}$. We have that $S(t)A(t)S(t)^{-1}=A(t)+B(t)$, where
\begin{equation}\label{trip1}
B(t)= -\mathbf C_{\epsilon,-1}(t)+\mathbf
B_{\epsilon,-1}[\theta]\mathbf C_{\epsilon,-1}(t)(\mathbf B_{\epsilon,-1}[\theta])^{-1}.
\end{equation}
The first addendum in (\ref{trip1}) is strongly continuous into $\mathcal L(X)$ by (\ref{n5}).  Concerning the second summand, an explicit computation shows that
\begin{equation*}
(\mathbf B_{\epsilon,-1}[\theta])^{-1}(u,v)=(-(\mathbf A_{-1}[\theta])^{-1}(\epsilon v+u),u).
\end{equation*}
It follows that
\begin{multline*}
\mathbf B_{\epsilon,-1}[\theta]\mathbf C_{\epsilon,-1}(t)(\mathbf B_{\epsilon,-1}[\theta])^{-1}(u,v)\\
=(-(1/\epsilon)\widehat{\partial_u f}(\bar u(t))\cdot(\mathbf A_{-1}[\theta])^{-1}(\epsilon v+u),
(1/\epsilon^2)\widehat{\partial_u f}(\bar u(t))\cdot(\mathbf A_{-1}[\theta])^{-1}(\epsilon v+u)).
\end{multline*}
Now it follows from (\ref{n2}) and (\ref{n3}) that the second addendum in (\ref{trip1}) is strongly continuous into $\mathcal L(X)$.
\end{proof}

We denote by $\mathbf U_{\epsilon,-1}[\theta](t,s)$ the evolution
family generated by $\mathbf B_{\epsilon,-1}[\theta]+\mathbf
C_{\epsilon,-1}(t)$ in $Z_{-1}$ and by $\mathbf
U_{\epsilon,0}[\theta](t,s)$ its restriction to $Z_0$. We need the
following lemma.
\begin{Lemma}\label{bridge}
Let $h(\cdot)\colon \R\to L^2(\Omega)$ be a continuous function.
Let $(v_s,w_s)\in Z_{-1}$ and let $(\tilde v(\cdot),\tilde
w(\cdot))\colon[0,+\infty[\to Z_{-1}$ be the unique solution of
\begin{multline}\label{brid1}
(v(t),w(t))=\mathbf T_{\epsilon,-1}(t-s)( v_s, w_s)\\+\int_s^t
\mathbf T_{\epsilon,-1}(t-p)( (0,- (\theta/\epsilon) v(p))+\mathbf
C_{\epsilon,-1} (p)(v(p),w(p))+(0,h(p)))  \,(dp\mid Z_{-1}).
\end{multline}
Then
\begin{equation}\label{brid2}
(\tilde v(t),\tilde w(t))=\mathbf
U_{\epsilon,-1}[\theta](t,s)(v_s,w_s)+\int_s^t \mathbf
U_{\epsilon,-1}[\theta](t,p)(0,h(p)) \,(dp\mid Z_{-1}).
\end{equation}
\end{Lemma}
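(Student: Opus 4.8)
The plan is to identify the two representations (\ref{brid1}) and (\ref{brid2}) by showing that the right-hand side of (\ref{brid2}) satisfies the same integral equation (\ref{brid1}), and then invoke uniqueness. First I would recall that, by Proposition \ref{verkato} and Kato's Theorem \ref{thkato}, the evolution family $\mathbf U_{\epsilon,-1}[\theta](t,s)$ is well-defined, strongly continuous, and — via property (5) of Theorem \ref{thkato} together with the standard variation-of-constants machinery for evolution systems — the function
\begin{equation*}
(\hat v(t),\hat w(t)):=\mathbf U_{\epsilon,-1}[\theta](t,s)(v_s,w_s)+\int_s^t \mathbf U_{\epsilon,-1}[\theta](t,p)(0,h(p))\,(dp\mid Z_{-1})
\end{equation*}
is the unique mild solution of the non-autonomous Cauchy problem $\partial_t(\hat v,\hat w)=A(t)(\hat v,\hat w)+(0,h(t))$, $(\hat v(s),\hat w(s))=(v_s,w_s)$, where $A(t)=\mathbf B_{\epsilon,-1}[\theta]+\mathbf C_{\epsilon,-1}(t)$. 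Here one uses that $(0,h(\cdot))$ is continuous into $Z_{-1}$, which holds since $h(\cdot)$ is continuous into $L^2(\Omega)=H_0\hookrightarrow H_{-1}$.

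Next I would rewrite the generator $A(t)$ in terms of the unperturbed semigroup generator $\mathbf B_{\epsilon,-1}$. By the very definitions, $\mathbf B_{\epsilon,-1}[\theta](u,v)=\mathbf B_{\epsilon,-1}(u,v)+(0,-(\theta/\epsilon)u)$, so that
\begin{equation*}
A(t)(u,v)=\mathbf B_{\epsilon,-1}(u,v)+(0,-(\theta/\epsilon)u)+\mathbf C_{\epsilon,-1}(t)(u,v).
\end{equation*}
Thus the mild solution of $\partial_t(\hat v,\hat w)=A(t)(\hat v,\hat w)+(0,h(t))$ with data $(v_s,w_s)$, expressed this time through the semigroup $\mathbf T_{\epsilon,-1}$ generated by $\mathbf B_{\epsilon,-1}$, is precisely
\begin{multline*}
(\hat v(t),\hat w(t))=\mathbf T_{\epsilon,-1}(t-s)(v_s,w_s)\\+\int_s^t\mathbf T_{\epsilon,-1}(t-p)\bigl((0,-(\theta/\epsilon)\hat v(p))+\mathbf C_{\epsilon,-1}(p)(\hat v(p),\hat w(p))+(0,h(p))\bigr)\,(dp\mid Z_{-1}),
\end{multline*}
which is exactly equation (\ref{brid1}). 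So both $(\tilde v,\tilde w)$ and the right-hand side of (\ref{brid2}) solve (\ref{brid1}); since the perturbation $(u,v)\mapsto(0,-(\theta/\epsilon)u)+\mathbf C_{\epsilon,-1}(t)(u,v)$ is, by (\ref{n4}) and (\ref{n5}), Lipschitz continuous from $Z_{-1}$ into $Z_{-1}$ locally uniformly in $t$ (indeed $\mathbf C_{\epsilon,-1}(t)$ is bounded on $Z_{-1}$ with norm controlled by (\ref{n4}) and $\sup_t\|\bar u(t)\|_{H^1_0}$), the integral equation (\ref{brid1}) has a unique solution on $[s,+\infty[$ by the standard contraction argument on short intervals followed by continuation. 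Hence $(\tilde v(t),\tilde w(t))=(\hat v(t),\hat w(t))$ for all $t\geq s$, which is (\ref{brid2}).

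The main obstacle is the bookkeeping needed to make the passage between the two variation-of-constants formulas rigorous at the level of \emph{mild} (not classical) solutions: one must verify that the abstract perturbation result — "if $\mathbf T$ is the semigroup generated by $\mathbf B$ and $A(t)=\mathbf B+P(t)$ with $P(\cdot)$ strongly continuous and bounded, then the evolution system $U(t,s)$ for $A(t)$ satisfies $U(t,s)x=\mathbf T(t-s)x+\int_s^t\mathbf T(t-p)P(p)U(p,s)x\,dp$, and conversely the Duhamel term against $(0,h)$ transfers correctly" — applies in our setting, where $P(t)$ splits as the (bounded, everywhere-defined) operator $(0,-(\theta/\epsilon)u)+\mathbf C_{\epsilon,-1}(t)$ on $Z_{-1}$. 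This is where the hypotheses of Theorem \ref{thkato}, already checked in Proposition \ref{verkato}, together with the continuity of $t\mapsto\mathbf C_{\epsilon,-1}(t)$ into $\mathcal L(Z_{-1})$ noted before that proposition, do the work; the remaining estimates are routine consequences of Proposition \ref{prop1}.
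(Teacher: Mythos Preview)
Your strategy---show that the right-hand side of (\ref{brid2}) solves the integral equation (\ref{brid1}) and then invoke uniqueness---is exactly the paper's. Where the paper is more explicit is in justifying the passage between the two Duhamel formulas, which you defer to ``standard variation-of-constants machinery''. Note that property (5) of Theorem \ref{thkato}, which you invoke, only gives differentiability of $t\mapsto\mathbf U_{\epsilon,-1}[\theta](t,s)y$ for $y\in Y=Z_0$, not for general $(v_s,w_s)\in Z_{-1}$; so one cannot work purely at the mild level in $Z_{-1}$ without some extra step. The paper supplies that step by first taking $(v_s,w_s)\in Z_0$: then Theorem 7.1 in \cite{K} makes the right-hand side of (\ref{brid2}) a \emph{classical} solution in $Z_{-1}$, continuous into $Z_0$; since the forcing $-(0,(\theta/\epsilon)\check v)+\mathbf C_{\epsilon,-1}(\cdot)(\check v,\check w)+(0,h)$ is then continuous into $D(\mathbf B_{\epsilon,-1})=Z_0$, Corollary IV.2.2 in \cite{Pazy} yields the $\mathbf T_{\epsilon,-1}$-integral representation (\ref{brid1}) directly, and uniqueness closes this case. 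The extension to arbitrary $(v_s,w_s)\in Z_{-1}$ is by density. Your final paragraph correctly flags this as the nontrivial point; the paper's device of climbing to regular data and then descending by density is precisely how the ``bookkeeping'' is made rigorous, and is worth writing out explicitly rather than absorbing into an unproved abstract perturbation identity.
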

\begin{proof}
We suppose first that $(v_s,w_s)\in Z_0$. Define
\begin{equation*}
(\check v(t),\check w(t))=\mathbf
U_{\epsilon,-1}[\theta](t,s)(v_s,w_s)+\int_s^t \mathbf
U_{\epsilon,-1}[\theta](t,p)(0,h(p)) \,(dp\mid Z_{-1}).
\end{equation*}
By Theorem 7.1 in \cite{K}, we have that $(\check v(\cdot),\check
w(\cdot))$ is continuously differentiable into $Z_{-1}$,
continuous into $Z_0$, and satisfies
\begin{multline*}
(\partial_t\mid Z_{-1})(\check v(t),\check w(t))=(\mathbf
B_{\epsilon,-1}[\theta]+\mathbf C_{\epsilon,-1}(t))(\check
v(t),\check w(t))+(0,h(t))\\ =\mathbf B_{\epsilon,-1}(\check
v(t),\check w(t))-(0,(\theta/\epsilon)\check v(t)) +\mathbf
C_{\epsilon,-1}(t)(\check v(t),\check w(t))+(0,h(t)).
\end{multline*}
Since the mapping $t\mapsto -(0,(\theta/\epsilon)\check v(t))
+\mathbf C_{\epsilon,-1}(t)(\check v(t),\check w(t))+(0,h(t))$ is
continuous into $Z_0=D(\mathbf B_{\epsilon,-1})$, it follows from
Corollary IV.2.2 in \cite{Pazy} that
\begin{multline*}
(\check v(t),\check w(t))=\mathbf T_{\epsilon,-1}(t-s)( v_s,
w_s)\\+\int_s^t \mathbf T_{\epsilon,-1}(t-p)( (0,-
(\theta/\epsilon) \check v(p))+\mathbf C_{\epsilon,-1} (p)(\check
v(p),\check w(p))+(0,h(p)))  \,(dp\mid Z_{-1}).
\end{multline*}
By the uniqueness of the solution of (\ref{brid1}), we obtain that
$(\check v(\cdot),\check w(\cdot))=(\tilde v(\cdot),\tilde
w(\cdot))$. Finally, if $(v_s,w_s)\in Z_{-1}$, the conclusion
follows from a density argument.
\end{proof}
Now (\ref{k1}) and Lemma \ref{bridge} with $h(t)=(\theta/\epsilon)\bar v(t)$, $t\in\R$, imply that
\begin{equation}\label{k2}
(\bar v(t),\bar w(t))=\mathbf U_{\epsilon,-1}[\theta](t,s)(\bar
v(s),\bar w(s))+\int_s^t \mathbf
U_{\epsilon,-1}[\theta](t,p)(0,(\theta/\epsilon)\bar v(p)) \,(dp\mid Z_{-1}).
\end{equation}
The next step consists in finding suitable decay estimates for  $\mathbf U_{\epsilon,-1}[\theta](t,s)$ and
$\mathbf U_{\epsilon,0}[\theta](t,s)$. To this end, we need to introduce some more notation.

For $\theta\geq 0$ and $\tau\in\R$, we denote by $\mathbf
T_{\epsilon,0}[\theta,\tau](t)$ (resp. by $\mathbf
T_{\epsilon,-1}[\theta,\tau](t)$) the semigroup generated by
$\mathbf B_{\epsilon,0}[\theta]+\mathbf C_{\epsilon,0}(\tau)$ in
$Z_{0}$ (resp. by $\mathbf B_{\epsilon,-1}[\theta]+\mathbf
C_{\epsilon,-1}(\tau)$ in $Z_{-1}$).

For $\theta\geq 0$, we define the following scalar product
in $H^1_0(\Omega)$:
\begin{multline}
\langle u,v\rangle_{H^1_0[\theta]}:=\int_\Omega A(x)\nabla
u(x)\cdot\nabla v(x)\,dx\\+\int_\Omega\beta(x)u(x)v(x)\,dx
+\int_\Omega\theta u(x)v(x)\,dx,\quad u,v\in H^1(\Omega).
\end{multline}
We denote by $\|\cdot\|_{H^1_0[\theta]}$ the corresponding norm.
Moreover, we denote by $\langle\cdot
,\cdot\rangle_{H^{-1}[\theta]}$ the scalar product in
$H^{-1}(\Omega)$ dual to $\langle
\cdot,\cdot\rangle_{H^1_0[\theta]}$, and by
$\|\cdot\|_{H^{-1}[\theta]}$ the corresponding norm.
We have the following estimates:
\begin{equation}\label{equ1}
(\frac{\lambda_1}{\theta+\lambda_1})^{1/2}\|\cdot\|_{H^1_0[\theta]}\leq\|\cdot\|_{H^1_0}\leq
\|\cdot\|_{H^1_0[\theta]}
\end{equation}
and
\begin{equation}\label{equ2}
(\frac{\lambda_1}{\theta+\lambda_1})^{1/2}\|\cdot\|_{H^{-1}}\leq\|\cdot\|_{H^{-1}[\theta]}\leq\|\cdot\|_{H^{-1}}.
\end{equation}
Moreover,
\begin{equation}\label{eigen1}
\|u\|^2_{H^{1}[\theta]}\geq(\lambda_1+\theta) \|u\|^2_{L^2},\quad
u\in H^1_0(\Omega)
\end{equation}
and
\begin{equation}\label{eigen2}
\|u\|^2_{L^2}\geq(\lambda_1+\theta) \|u\|^2_{H^{-1}[\theta]},\quad
u\in L^2(\Omega).
\end{equation}
Notice also that
\begin{equation*}
\langle u,v\rangle_{H^1_0[\theta]}=\langle\mathbf A_0[\theta]
u,v\rangle_{L^2},\quad u\in D(\mathbf A_0[\theta]),\,v\in
H^1_0(\Omega)
\end{equation*}
and
\begin{equation*}
\langle u,v\rangle_{L^2}=\langle\mathbf A_{-1}[\theta]
u,v\rangle_{H^{-1}[\theta]},\quad u\in D(\mathbf A_{-1}[\theta]),\,v\in
L^2(\Omega).
\end{equation*}
For $\theta\geq 0$, we define the following norms in $Z_0$ and
$Z_{-1}$ respectively:
\begin{equation*}
\|(u,v)\|_{Z_{\epsilon,0}[\theta]}:=\|u\|_{H^1_0[\theta]}+\epsilon^{1/2}\|v\|_{L^2},
\quad\quad
\|(u,v)\|_{Z_{\epsilon,-1}[\theta]}:=\|u\|_{L^2}+\epsilon^{1/2}\|v\|_{H^{-1}[\theta]}.
\end{equation*}

For $\theta\geq 0$ and $\tau\in\R$, we define also the following
bilinear form in $H^1_0(\Omega)$:
\begin{multline}
\langle u,v\rangle_{H^1_0[\theta,\tau]}:=\int_\Omega A(x)\nabla
u(x)\cdot\nabla v(x)\,dx+\int_\Omega\beta(x)u(x)v(x)\,dx\\
+\int_\Omega\theta u(x) v(x)\,dx -\int_\Omega \widehat{\partial_u
f}(\bar u(\tau))(x)u(x)v(x)\,dx,\quad u,v\in H^1(\Omega).
\end{multline}
We shall see in a moment that, for sufficiently large $\theta$,
$\langle \cdot,\cdot\rangle_{H^1_0[\theta,\tau]}$
is in fact a scalar product.
We denote by $\|\cdot\|_{H^1_0[\theta,\tau]}$ the corresponding
norm. Moreover, we denote by $\langle\cdot
,\cdot\rangle_{H^{-1}[\theta,\tau]}$ the scalar product in
$H^{-1}(\Omega)$ dual to $\langle
\cdot,\cdot\rangle_{H^1_0[\theta,\tau]}$, and by
$\|\cdot\|_{H^{-1}[\theta,\tau]}$ the corresponding norm. For
$\kappa=0,-1$, define $\mathbf A_{\kappa}[\theta,\tau]:=\mathbf
A_{\kappa}[\theta]-\widehat{\partial_u f}(\bar u(\tau))$ and
notice that
\begin{equation*}
\langle u,v\rangle_{H^1_0[\theta,\tau]}=\langle\mathbf
A_0[\theta,\tau] u,v\rangle_{L^2},\quad u\in D(\mathbf
A_0[\theta,\tau]),\,v\in H^1_0(\Omega)
\end{equation*}
and
\begin{equation*}
\langle u,v\rangle_{L^2}=\langle\mathbf A_{-1}[\theta,\tau]
u,v\rangle_{H^{-1}[\theta,\tau]},\quad u\in D(\mathbf
A_{-1}[\theta,\tau]),\,v\in L^2(\Omega).
\end{equation*}
We need the following lemma.
\begin{Lemma}\label{equiv}
For every $\rho$, with $0<\rho<1$, there exists $\theta_\rho\geq
0$ such that, for all $\theta\geq\theta_\rho$ and $\tau\in\R$,
\begin{equation}\label{equ3}
(1-\rho)^{1/2}\|\cdot\|_{H^1_0[\theta]}\leq\|\cdot\|_{H^1_0[\theta,\tau]}\leq(1+\rho)^{1/2}\|\cdot\|_{H^1_0[\theta]}
\end{equation}
and
\begin{equation}\label{equ4}
(1-\rho)^{1/2}\|\cdot\|_{H^{-1}[\theta,\tau]}\leq\|\cdot\|_{H^{-1}[\theta]}\leq(1+\rho)^{1/2}\|\cdot\|_{H^{-1}[\theta,\tau]}.
\end{equation}
The constant $\theta_\rho$, besides $\rho$, depends only on $R$
and on the constants in Hypotheses \ref{hyp1} and \ref{hyp2}.
\end{Lemma}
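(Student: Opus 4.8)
The plan is to reduce both pairs of inequalities to a single bound on the quadratic form $u\mapsto\int_\Omega\widehat{\partial_u f}(\bar u(\tau))(x)\,u(x)^2\,dx$. Directly from the definitions of the two bilinear forms one has, for every $u\in H^1_0(\Omega)$,
\[
\|u\|^2_{H^1_0[\theta,\tau]}=\|u\|^2_{H^1_0[\theta]}-\int_\Omega\widehat{\partial_u f}(\bar u(\tau))(x)\,u(x)^2\,dx ,
\]
so that (\ref{equ3}) is equivalent to the estimate
\[
\Bigl|\int_\Omega\widehat{\partial_u f}(\bar u(\tau))(x)\,u(x)^2\,dx\Bigr|\leq\rho\,\|u\|^2_{H^1_0[\theta]}
\]
holding for all $u\in H^1_0(\Omega)$, all $\tau\in\R$ and all $\theta\geq\theta_\rho$. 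The left inequality of (\ref{equ3}) with $\rho<1$ will also show at once that $\langle\cdot,\cdot\rangle_{H^1_0[\theta,\tau]}$ is positive definite, hence genuinely a scalar product.

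To prove that estimate I would interpret $\int_\Omega\widehat{\partial_u f}(\bar u(\tau))u^2\,dx$ as the duality pairing $\langle\widehat{\partial_u f}(\bar u(\tau))\cdot u,u\rangle_{H^{-1},H^1_0}$, which is legitimate by Proposition \ref{prop1} since $u\in H^1_0(\Omega)\subset L^2(\Omega)$; hence
\[
\Bigl|\int_\Omega\widehat{\partial_u f}(\bar u(\tau))u^2\,dx\Bigr|\leq\|\widehat{\partial_u f}(\bar u(\tau))\|_{\mathcal L(L^2,H^{-1})}\,\|u\|_{L^2}\,\|u\|_{H^1_0}.
\]
By (\ref{n4}) together with the standing bound $\|\bar u(\tau)\|^2_{H^1_0}\leq R$, the operator norm is $\leq C_R:=\tilde C(1+R)$, uniformly in $\tau\in\R$. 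Next, (\ref{eigen1}) gives $\|u\|_{L^2}\leq(\lambda_1+\theta)^{-1/2}\|u\|_{H^1_0[\theta]}$ and the right inequality in (\ref{equ1}) gives $\|u\|_{H^1_0}\leq\|u\|_{H^1_0[\theta]}$, whence
\[
\Bigl|\int_\Omega\widehat{\partial_u f}(\bar u(\tau))u^2\,dx\Bigr|\leq\frac{C_R}{(\lambda_1+\theta)^{1/2}}\,\|u\|^2_{H^1_0[\theta]}.
\]
It then suffices to set $\theta_\rho:=\max\{0,\,C_R^2\rho^{-2}-\lambda_1\}$, so that $C_R(\lambda_1+\theta)^{-1/2}\leq\rho$ for every $\theta\geq\theta_\rho$; this gives the required estimate and hence (\ref{equ3}), with $\theta_\rho$ depending only on $\rho$, on $R$ and on $\lambda_1$ and $\tilde C$, i.e. on the constants of Hypotheses \ref{hyp1} and \ref{hyp2}.

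Finally, (\ref{equ4}) will follow from (\ref{equ3}) by the elementary duality principle for equivalent Hilbert norms: if $a\|\cdot\|\leq\|\cdot\|'\leq b\|\cdot\|$ on $H^1_0(\Omega)$, then the dual norms on $H^{-1}(\Omega)$ satisfy $b^{-1}\|\cdot\|^{*}\leq(\|\cdot\|')^{*}\leq a^{-1}\|\cdot\|^{*}$; applying this with $\|\cdot\|=\|\cdot\|_{H^1_0[\theta]}$, $\|\cdot\|'=\|\cdot\|_{H^1_0[\theta,\tau]}$, $a=(1-\rho)^{1/2}$ and $b=(1+\rho)^{1/2}$ yields exactly (\ref{equ4}). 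The one point that requires care -- the effective main obstacle -- is making the bound on the quadratic form uniform in $\tau$: one must read $\int\widehat{\partial_u f}(\bar u(\tau))u^2$ as an $H^{-1}$--$H^1_0$ pairing so that Proposition \ref{prop1} (and through it the three-dimensional Sobolev embedding $H^1\hookrightarrow L^6$) applies, and then use the uniform-in-$t$ a priori bound $R$ on the full solution to control $\|\widehat{\partial_u f}(\bar u(\tau))\|_{\mathcal L(L^2,H^{-1})}$. Everything else is bookkeeping with the already established norm comparisons (\ref{equ1}), (\ref{equ2}), (\ref{eigen1}) and (\ref{eigen2}).
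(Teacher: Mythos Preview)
Your proof is correct and follows the same overall plan as the paper's: bound the quadratic perturbation $\int_\Omega\widehat{\partial_u f}(\bar u(\tau))\,u^2\,dx$ by $\rho\|u\|_{H^1_0[\theta]}^2$ for $\theta$ large, then pass to (\ref{equ4}) by duality. The one genuine difference is in how that quadratic form is controlled. The paper invokes a Kato--type relative form bound,
\[
\int_\Omega|\widehat{\partial_u f}(\bar u(\tau))(x)||u(x)|^2\,dx\le \nu\int_\Omega|\nabla u|^2\,dx+C_\nu\int_\Omega|u|^2\,dx,
\]
(stated without proof; it follows because $|\partial_u f(x,\bar u(\tau))|$ is dominated by a constant plus a potential in $L^3(\Omega)$), and then takes $\nu=\rho/2$ and $\theta_\rho$ with $C_\nu/(\theta_\rho+\lambda_1)\le\rho/2$. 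You instead read the integral as the $H^{-1}$--$H^1_0$ pairing and apply (\ref{n4}) directly, which is more elementary, avoids the intermediate form-boundedness claim, and yields an explicit $\theta_\rho=\max\{0,\tilde C^2(1+R)^2\rho^{-2}-\lambda_1\}$; the price is a slightly slower decay $(\lambda_1+\theta)^{-1/2}$ instead of $(\lambda_1+\theta)^{-1}$, which is irrelevant for the lemma. One could equally well use (\ref{n2}) in place of (\ref{n4}) and estimate $|\langle D\hat f(\bar u(\tau))[u],u\rangle_{L^2}|\le\tilde C(1+R)\|u\|_{H^1_0}\|u\|_{L^2}$, sidestepping the duality-pairing identification entirely.
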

\begin{proof}
First we observe that for every $\nu>0$ there is a $C_\nu>0$ with
$$\int_\Omega|\widehat{\partial_u f}(\bar
u(\tau))(x)||u(x)|^2\,dx\leq \nu\int_\Omega|\nabla
u(x)|^2\,dx+C_\nu \int_\Omega|u(x)|^2\,dx $$ for all $u\in
H^1_0(\Omega)$. The constant $C_\nu$, besides $\nu$, depends on
the constants in Hypotheses \ref{hyp1} and \ref{hyp2}. It follows
that
\begin{multline*}
\|u\|_{H^1_0[\theta,\tau]}^2\leq\|u\|_{H^1_0[\theta]}^2+\nu\|u\|_{H^1_0}^2+C_\nu\|u\|_{L^2}^2
\leq\|u\|_{H^1_0[\theta]}^2+\nu\|u\|_{H^1_0[\theta]}^2+\frac{C_\nu}{\theta+\lambda_1}\|u\|_{H^1_0[\theta]}^2.
\end{multline*}
On the other hand,
\begin{equation*}
\|u\|_{H^1_0[\theta,\tau]}^2\geq\|u\|_{H^1_0[\theta]}^2-\nu\|u\|_{H^1_0}^2-C_\nu\|u\|_{L^2}^2
\geq\|u\|_{H^1_0[\theta]}^2-\nu\|u\|_{H^1_0[\theta]}^2-\frac{C_\nu}{\theta+\lambda_1}\|u\|_{H^1_0[\theta]}^2.
\end{equation*}
Choosing first $\nu=\rho/2$ and then $\theta_\rho$ such that
${C_\nu}/(\theta_\rho+\lambda_1)\leq\rho/2$, we obtain
(\ref{equ3}). Estimates (\ref{equ4}) follow from (\ref{equ3}) and
a duality argument.
\end{proof}

Now we are ready to state and prove the desired decay estimates for  $\mathbf U_{\epsilon,-1}[\theta](t,s)$ and
$\mathbf U_{\epsilon,0}[\theta](t,s)$.

\begin{Prop}\label{decay}
There exist $\bar\theta_0>0$, $\delta>0$ $M>0$ such that, for all
$\theta\geq\bar\theta_\rho$,
\begin{equation}\label{dec1}
\|\mathbf U_{\epsilon,-1}[\theta](t,s)\|_{\mathcal
L(Z_{\epsilon,-1}[\theta])}\leq M e^{-\delta(t-s)},\quad t\geq s,
\end{equation}
and
\begin{equation}\label{dec2}
\|\mathbf U_{\epsilon,0}[\theta](t,s)\|_{\mathcal
L(Z_{\epsilon,0}[\theta])}\leq M e^{-\delta(t-s)},\quad t\geq s.
\end{equation}
The constant $\delta$ depends only on the constants in Hypothesis
\ref{hyp1}. The constants $\theta_0$ and $M$ depend on $R$, on the
modulus of continuity $\omega$ of $\bar u(\cdot)$ and on the
constants in Hypotheses \ref{hyp1} and \ref{hyp2}.
\end{Prop}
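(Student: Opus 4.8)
The plan is to deduce the decay of the non\-autonomous evolution families $\mathbf U_{\epsilon,\kappa}[\theta](t,s)$, $\kappa=-1,0$, from that of the \emph{frozen}\-coefficient semigroups $\mathbf T_{\epsilon,\kappa}[\theta,\tau](t)$, $\tau\in\R$, and to obtain the latter by a Lyapunov\-functional (energy) argument. First I would fix once and for all some $\rho\in]0,1[$ (say $\rho=1/2$) and set $\bar\theta_0:=\max\{\theta_\rho,\theta^*\}$, where $\theta_\rho$ is as in Lemma \ref{equiv} and $\theta^*$ is chosen so large that $\mathbf A_0[\theta,\tau]\geq(1-\rho)(\lambda_1+\theta)\geq 1$ for all $\theta\geq\bar\theta_0$ and all $\tau\in\R$ (this uses Lemma \ref{equiv} together with $\|u\|_{H^1_0[\theta]}^2\geq(\lambda_1+\theta)\|u\|_{L^2}^2$). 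Thus $\mathbf T_{\epsilon,\kappa}[\theta,\tau](t)$ is exactly the semigroup of the damped wave equation $\epsilon v_{tt}+v_t+\mathbf A_\kappa[\theta,\tau]v=0$ with $\mathbf A_\kappa[\theta,\tau]$ selfadjoint and uniformly (in $\tau$) positive.

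\emph{Decay of the frozen semigroups.} For the $Z_{\epsilon,-1}[\theta]$ estimate, along a solution $(v,w)$ of the frozen equation at time $\tau$ I would introduce
\[
\Phi(t):=\tfrac12\|v(t)\|_{L^2}^2+\tfrac\epsilon2\|w(t)\|_{H^{-1}[\theta,\tau]}^2+\eta\epsilon\,\langle v(t),w(t)\rangle_{H^{-1}[\theta,\tau]},
\]
with $\eta>0$ a small constant to be fixed. Using the duality identity $\langle\mathbf A_{-1}[\theta,\tau]u,\phi\rangle_{H^{-1}[\theta,\tau]}=\langle u,\phi\rangle_{L^2}$, a direct computation gives $\tfrac d{dt}\bigl(\tfrac12\|v\|_{L^2}^2+\tfrac\epsilon2\|w\|_{H^{-1}[\theta,\tau]}^2\bigr)=-\|w\|_{H^{-1}[\theta,\tau]}^2$ (the elliptic term cancels) and $\tfrac d{dt}\langle v,w\rangle_{H^{-1}[\theta,\tau]}=\|w\|_{H^{-1}[\theta,\tau]}^2-\tfrac1\epsilon\langle v,w\rangle_{H^{-1}[\theta,\tau]}-\tfrac1\epsilon\|v\|_{L^2}^2$. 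Combining these, using $\|v\|_{H^{-1}[\theta,\tau]}\leq(\lambda_1+\theta)^{-1/2}(1-\rho)^{-1/2}\|v\|_{L^2}$ to control the cross term, and choosing $\eta$ small (so that, moreover, $\Phi$ is equivalent to $\|(v,w)\|_{Z_{\epsilon,-1}[\theta,\tau]}^2$ uniformly in $\epsilon\in]0,1]$ and $\tau$), one obtains $\dot\Phi\leq-2\delta\Phi$ for a constant $\delta>0$ that — because the dissipation $\|w\|_{H^{-1}[\theta,\tau]}^2$ is \emph{unweighted} in $\epsilon$ — depends only on $\lambda_1$ and on the bound $\epsilon\leq1$. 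Integrating and then passing from the $[\theta,\tau]$\-norms to the $[\theta]$\-norms via (\ref{equ4}) yields (\ref{dec1}) for $\mathbf T_{\epsilon,-1}[\theta,\tau](t)$, uniformly in $\tau$ and $\epsilon$. The estimate (\ref{dec2}) for $\mathbf T_{\epsilon,0}[\theta,\tau](t)$ is obtained in the same way from $\tfrac12\|v\|_{H^1_0[\theta,\tau]}^2+\tfrac\epsilon2\|w\|_{L^2}^2+\eta\epsilon\langle v,w\rangle_{L^2}$, the $[\theta,\tau]$\-adapted first term being precisely what makes the cross term $\langle\widehat{\partial_uf}(\bar u(\tau))v,w\rangle$ cancel against the elliptic contribution.

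\emph{Transfer to the non\-autonomous families.} I would then partition an interval $[s,t]$ and, on each subinterval $[\sigma,\sigma']$, view the non\-autonomous equation generated by $\mathbf B_{\epsilon,\kappa}[\theta]+\mathbf C_{\epsilon,\kappa}(\cdot)$ as the frozen equation at $\sigma$ with an extra forcing $(0,(1/\epsilon)(\widehat{\partial_uf}(\bar u(p))-\widehat{\partial_uf}(\bar u(\sigma)))v(p))$. By (\ref{n5}) and $\sup_t\|\bar u(t)\|_{H^1_0}^2\leq R$, this forcing is bounded in the relevant norm by $C_R\,\omega(\sigma'-\sigma)^\beta$ times the size of the solution; since in the energy inequality it is paired only against the unweighted dissipative terms, it can be absorbed provided $\omega(\sigma'-\sigma)^\beta$ is small enough, an $\epsilon$\-independent smallness condition. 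One then chains the subinterval estimates, controlling the change of the frozen Lyapunov functional from $\sigma$ to $\sigma'$ again by Lemma \ref{equiv} and (\ref{n5}), to obtain (\ref{dec1})–(\ref{dec2}) for $\mathbf U_{\epsilon,-1}[\theta]$ and $\mathbf U_{\epsilon,0}[\theta]$, with $\delta$ unchanged and $M$ (and the threshold $\bar\theta_0$) now also depending on $R$ and on $\omega$.

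\emph{Main obstacle.} The routine part is the frozen decay; the genuinely delicate point is the transfer step. Under Hypothesis \ref{hyp2} the map $t\mapsto\widehat{\partial_uf}(\bar u(t))$ is only Hölder (not differentiable) in $t$, so one cannot differentiate the $\tau$\-adapted norms directly and must work with frozen functionals on small subintervals; the care needed is to perform the chaining so that the accumulated norm\-switching does not degrade the rate $\delta$. The two features that make this possible are, on the one hand, the largeness of $\theta$ — which by Lemma \ref{equiv} forces $\widehat{\partial_uf}(\bar u(\tau))$ to be uniformly form\-small relative to $\mathbf A[\theta]$ — and, on the other hand, the fact that the dissipation of the damped wave semigroup is unweighted in $\epsilon$, so that the $\omega(\cdot)^\beta$\-small forcing terms are absorbed without any loss of powers of $\epsilon$.
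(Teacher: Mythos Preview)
Your proposal is essentially the paper's own argument: the paper, too, partitions $[s,\infty[$ into subintervals $I_j=[s+j\eta,s+(j+1)\eta]$, differentiates a \emph{frozen} energy functional $E_{\theta,j}$ (with norms $\|\cdot\|_{H^{-1}[\theta,s+j\eta]}$, resp.\ $\|\cdot\|_{H^1_0[\theta,s+j\eta]}$) along the non\-autonomous solution via the representation (\ref{dec3}), absorbs the $\omega(\eta)^\beta$\-small cross term into the $(2\epsilon\delta-1)\|\delta v+w\|^2$ dissipation, and then chains. The only stylistic difference is that the paper does not first isolate the decay of the frozen semigroups as a separate step: it computes $\tfrac{d}{dt}E_{\theta,j}$ directly along the true solution and obtains both the frozen dissipation and the perturbation term in one line. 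Your Lyapunov functional $\Phi$ and the paper's $E_{\theta,j}$ are the same up to a quadratic change of variables.

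One point to tighten: you fix $\rho=1/2$ at the outset and then invoke Lemma~\ref{equiv} in the chaining. With $\rho$ fixed, the norm\-switching at each junction costs a factor $(1+\rho)/(1-\rho)=3$, which over $N\sim(t-s)/\eta$ junctions destroys the decay unless $\eta$ is large---contradicting the smallness of $\eta$ needed to absorb the perturbation. The paper resolves this by reversing the order: first choose $\eta$ (from $\omega$, $R$ and the constants, via $2\tilde C(1+2R^\alpha)^2\omega(\eta)^{2\beta}\leq\delta$), then choose $\rho\in]0,1/2]$ so small that $((1+\rho)/(1-\rho))e^{-\delta\eta/2}\leq 1$, and only then set $\bar\theta_0=\theta_\rho$. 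This is why $\bar\theta_0$ (and $M$) end up depending on $\omega$, consistent with your closing remark that ``the largeness of $\theta$'' is one of the two features that make the chaining work. Once you make that dependency explicit, your argument and the paper's coincide.
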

\begin{proof}
The proof is similar to, and inspired by, the proof of inequality
(3.43) in \cite{HR2}. We begin by proving (\ref{dec1}). Let
$\theta\geq\theta_{1/2}$, where $\theta_{1/2}$ is given by Lemma \ref{equiv}, let $s\in\R$ and let $(v_s,w_s)\in Z_{-1}$. Set
$(v(t),w(t)):=\mathbf U_{\epsilon,-1}[\theta](t,s)(v_s,w_s)$,
$t\geq s$. The same argument exploited in the proof of Lemma
\ref{bridge} shows that, for $s\leq \tau\leq t$,
\begin{multline}\label{dec3}
(v(t),w(t))=\mathbf T_{\epsilon,-1}[\theta,\tau](t-\tau)(v(\tau),w(\tau))\\
+\int_\tau^t\mathbf T_{\epsilon,-1}[\theta,\tau](t-p)
(\mathbf C_{\epsilon,-1}(p)-\mathbf C_{\epsilon,-1}(\tau))(v(p),w(p))\,(dp\mid Z_{-1}).
\end{multline}
Let $\delta$ be a positive constant, with
$\delta\leq\min\{1/2,\lambda_1/2\}$, and let $\eta$  be a positive
constant to be fixed later. For $j\in\N_0$, we define the
intervals $I_j:=[s+j\eta,s+(j+1)\eta]$. For $j\in\N_0$, we
introduce the following family of energy functionals on $Z_{-1}$:
\begin{equation}
E_{\theta,j}(v,w):=\frac12\epsilon\|\delta
v+w\|_{H^{-1}[\theta,s+j\eta]}^2+\frac12\|v\|_{L^2}^2+
\frac12(\epsilon\delta^2-\delta)\|v\|_{H^{-1}[\theta,s+j\eta]}^2.
\end{equation}
Moreover, we define
\begin{equation}
E_{\theta}(v,w):=\frac12\epsilon\|\delta
v+w\|_{H^{-1}[\theta]}^2+\frac12\|v\|_{L^2}^2+
\frac12(\epsilon\delta^2-\delta)\|v\|_{H^{-1}[\theta]}^2.
\end{equation}
Since $\epsilon\in]0,1]$ and $\delta\leq\min\{1/2,\lambda_1/2\}$,
a direct computation using (\ref{eigen2}) shows that, for all
$\theta\geq\theta_{1/2}$,
\begin{equation}\label{dec4}
\frac14\|(v,w)\|_{Z_{\epsilon,-1}[\theta]}\leq E_{\theta}(v,w)\leq
\frac34 \|(v,w)\|_{Z_{\epsilon,-1}[\theta]},\quad (v,w)\in Z_{-1}.
\end{equation}
Moreover, by Lemma \ref{equiv}, for every $\rho$, with $0<\rho\leq 1/2$,
there exists $\theta_\rho\geq\theta_{1/2}$ such that, for all
$\theta\geq\theta_\rho$ and all $j\in\N_0$,
\begin{equation}\label{serve1}
(1-\rho)E_{\theta,j}(v,w)\leq E_{\theta}(v,w)\leq
(1+\rho)E_{\theta,j}(v,w), \quad (v,w)\in Z_{-1}.
\end{equation}
 An
elementary, but quite tedious computation, using (\ref{dec3} )
with $\tau=s+j\eta$ and Theorem 2.6 in \cite{PR2}, shows that the
mapping $t\mapsto E_{\theta,j}(v(t),w(t))$ is differentiable on
$I_j$, and
\begin{multline}
\frac{d}{dt}E_{\theta,j}(v(t),w(t))+2\delta
E_{\theta,j}(v(t),w(t))= (2\epsilon\delta-1)\|\delta
v(t)+w(t)\|_{H^{-1}(\theta,s+j\eta)}^2\\ +\langle\delta v(t)+w(t),
(\widehat{\partial_uf}(\bar u(t))-\widehat{\partial_uf}(\bar
u(s+j\eta)))v(t) \rangle_{H^{-1}(\theta,s+j\eta)}.
\end{multline}
Take $\rho$, with $0<\rho\leq1/2$, and take $\theta\geq\theta_\rho$.
Using Cauchy-Schwartz inequality, inequalities (\ref{equ2}),
(\ref{equ4}) and (\ref{n5}), and the fact that
$(2\epsilon\delta-1)\leq-1/2$, we obtain
\begin{multline}
\frac{d}{dt}E_{\theta,j}(v(t),w(t))+2\delta
E_{\theta,j}(v(t),w(t))\\ \leq\frac12
\|(\widehat{\partial_uf}(\bar u(t))-\widehat{\partial_uf}(\bar
u(s+j\eta)))v(t)\|^2_{H^{-1}(\theta,s+j\eta)}\\ \leq
\frac12(1-\rho)^{-1} \|(\widehat{\partial_uf}(\bar
u(t))-\widehat{\partial_uf}(\bar u(s+j\eta)))v(t)\|^2_{H^{-1}}\\
\leq \frac12(1-\rho)^{-1} \|\widehat{\partial_uf}(\bar
u(t))-\widehat{\partial_uf}(\bar u(s+j\eta))\|^2_{\mathcal L(L^2,
H^{-1})}\|v(t)\|_{L^2}^2\\\leq
 \frac12(1-\rho)^{-1}\tilde C(1+2R^\alpha)^2\|\bar u(t)-\bar
 u(s+j\eta)\|^{2\beta}_{H^1_0}\|v(t)\|_{L^2}^2\\
 \leq \frac12(1-\rho)^{-1}\tilde
 C(1+2R^\alpha)^2\omega(\eta)^{2\beta}\|v(t)\|_{L^2}^2\\
 \leq (1-\rho)^{-1}\tilde
 C(1+2R^\alpha)^2\omega(\eta)^{2\beta}E_{\theta,j}(v(t),w(t)).
\end{multline}
Now, recalling  that $\rho\leq(1/2)$, we choose $\eta$ in such a way
that $$2\tilde
 C(1+2R^\alpha)^2\omega(\eta)^{2\beta}\leq\delta.$$
With this choice, we have
\begin{equation}
\frac{d}{dt}E_{\theta,j}(v(t),w(t))+\delta
E_{\theta,j}(v(t),w(t))\leq0.
\end{equation}
It follows that, for $t\in I_j$,
\begin{equation}\label{won1}
E_{\theta,j}(v(t),w(t))\leq
e^{-\delta(t-(s+j\eta))}E_{\theta,j}(v(s+j\eta),w(s+j\eta)).
\end{equation}
Iterating inequality (\ref{won1}) and taking into account
(\ref{serve1}), we obtain, for $j\in\N_0$ and  $t\in I_j$, that
\begin{equation}\label{won2}
E_{\theta}(v(t),w(t))\leq(\frac{1+\rho}{1-\rho})^{j+1}
e^{-\delta(t-s)}E_{\theta}(v_s,w_s).
\end{equation}
We are still free to choose $\rho\in]0,1/2]$. At this point, we
observe that $t-s\geq j\eta$. Therefore, we choose $\rho$ in such
a way that $$(\frac{1+\rho}{1-\rho}) e^{-\delta \eta/2} \leq 1.$$
With this choice, we obtain that, for $\theta\geq\theta_\rho$,
\begin{equation}\label{won3}
E_{\theta}(v(t),w(t))\leq(\frac{1+\rho}{1-\rho})
e^{-\delta/2(t-s)}E_{\theta}(v_s,w_s),\quad t\geq s.
\end{equation}
Finally, putting together (\ref{dec4}) and (\ref{won3}), we obtain
(\ref{dec1}).

In order to prove (\ref{dec1}) we proceed in the same way. Let
$\theta\geq\theta_{1/2}$, let $s\in\R$ and let $(v_s,w_s)\in Z_{0}$. Set
$(v(t),w(t)):=\mathbf U_{\epsilon,0}[\theta](t,s)(v_s,w_s)$,
$t\geq s$. Again, for $s\leq \tau\leq t$, we have
\begin{multline}
(v(t),w(t))=\mathbf
T_{\epsilon,-1}[\theta,\tau](t-\tau)(v(\tau),w(\tau))\\
+\int_\tau^t\mathbf T_{\epsilon,-1}[\theta,\tau](t-p) (\mathbf
C_{\epsilon,-1}(p)-\mathbf
C_{\epsilon,-1}(\tau))(v(p),w(p))\,(dp\mid Z_{-1}).
\end{multline}
Since $(v(\cdot),w(\cdot))$ is continuous into $Z_0$ by Theorem
\ref{thkato}, we have that
\begin{multline}\label{deca3}
(v(t),w(t))=\mathbf
T_{\epsilon,0}[\theta,\tau](t-\tau)(v(\tau),w(\tau))\\
+\int_\tau^t\mathbf T_{\epsilon,0}[\theta,\tau](t-p) (\mathbf
C_{\epsilon,0}(p)-\mathbf
C_{\epsilon,0}(\tau))(v(p),w(p))\,(dp\mid Z_{0}).
\end{multline}
Let $\delta$ be as above and let $\eta$  be a positive constant to
be fixed later. For $j\in\N_0$, we define the intervals $I_j$ as
above. For $j\in\N_0$, we introduce the following family of energy
functionals on $Z_{0}$:
\begin{equation}
\tilde E_{\theta,j}(v,w):=\frac12\epsilon\|\delta
v+w\|_{L^2}^2+\frac12\|v\|_{H^1_0[\theta,s+j\eta]}^2+
\frac12(\epsilon\delta^2-\delta)\|v\|_{L^2}^2.
\end{equation}
Moreover, we define
\begin{equation}
\tilde E_{\theta}(v,w):=\frac12\epsilon\|\delta
v+w\|_{L^2}^2+\frac12\|v\|_{H^1_0[\theta]}^2+
\frac12(\epsilon\delta^2-\delta)\|v\|_{L^2}^2.
\end{equation}
Since $\epsilon\in]0,1]$ and $\delta\leq\min\{1/2,\lambda_1/2\}$,
a direct computation using (\ref{eigen1}) shows that, for all
$\theta\geq\theta_{1/2}$,
\begin{equation}\label{deca4}
\frac14\|(v,w)\|_{Z_{\epsilon,0}[\theta]}\leq\tilde
E_{\theta}(v,w)\leq \frac34
\|(v,w)\|_{Z_{\epsilon,0}[\theta]},\quad (v,w)\in Z_{0}.
\end{equation}
Moreover, by Lemma \ref{equiv}, for every $\rho$, with $0<\rho\leq 1/2$,
there exists $\theta_\rho>0$ such that, for all
$\theta\geq\theta_\rho$ and all $j\in\N_0$,
\begin{equation}\label{serve2}
(1-\rho)\tilde E_{\theta}(v,w)\leq \tilde E_{\theta,j}(v,w)\leq
(1+\rho)\tilde E_{\theta}(v,w), \quad (v,w)\in Z_{0}.
\end{equation}
Again, using (\ref{deca3} ) with $\tau=s+j\eta$ and Theorem 2.6 in
\cite{PR2}, we see that that the mapping $t\mapsto \tilde
E_{\theta,j}(v(t),w(t))$ is differentiable on $I_j$, and
\begin{multline}
\frac{d}{dt}\tilde E_{\theta,j}(v(t),w(t))+2\delta \tilde
E_{\theta,j}(v(t),w(t))= (2\epsilon\delta-1)\|\delta
v(t)+w(t)\|_{L^2}^2\\ +\langle\delta v(t)+w(t),
(\widehat{\partial_uf}(\bar u(t))-\widehat{\partial_uf}(\bar
u(s+j\eta)))v(t) \rangle_{L^2}.
\end{multline}
Again, take $\rho$, with $0<\rho\leq1/2$, and take
$\theta\geq\theta_\rho$. Using Cauchy-Schwartz inequality,
inequalities (\ref{equ1}), (\ref{equ3}) and (\ref{n3}), and the
fact that $(2\epsilon\delta-1)\leq-1/2$, we obtain
\begin{multline}
\frac{d}{dt}\delta E_{\theta,j}(v(t),w(t))+2\delta \tilde
E_{\theta,j}(v(t),w(t))\\ \leq\frac12
\|(\widehat{\partial_uf}(\bar u(t))-\widehat{\partial_uf}(\bar
u(s+j\eta)))v(t)\|^2_{L^2}\\ \leq \frac12
\|\widehat{\partial_uf}(\bar u(t))-\widehat{\partial_uf}(\bar
u(s+j\eta))\|^2_{\mathcal L(H^1_0, L^2)}\|v(t)\|_{H^1_0}^2\\ \leq
\frac12 (1-\rho)^{-1}\|\widehat{\partial_uf}(\bar
u(t))-\widehat{\partial_uf}(\bar u(s+j\eta))\|^2_{\mathcal
L(H^1_0, L^2)}\|v(t)\|_{H^1_0[\theta,s+j\eta]}^2\\ \leq
 \frac12(1-\rho)^{-1}\tilde C(1+2R^\alpha)^2\|\bar u(t)-\bar
 u(s+j\eta)\|^{2\beta}_{H^1_0}\|v(t)\|_{H^1_0[\theta,s+j\eta]}^2\\
 \leq \frac12(1-\rho)^{-1}\tilde
 C(1+2R^\alpha)^2\omega(\eta)^{2\beta}\|v(t)\|_{H^1_0[\theta,s+j\eta]}^2\\
 \leq (1-\rho)^{-1}\tilde
 C(1+2R^\alpha)^2\omega(\eta)^{2\beta}\tilde E_{\theta,j}(v(t),w(t)).
\end{multline}
Now we proceed exactly as in the final part of the proof of
(\ref{dec1}): recalling that $\rho\leq(1/2)$,  we choose
$\eta$ in such a way that $$2\tilde
 C(1+2R^\alpha)^2\omega(\eta)^{2\beta}\leq\delta.$$
With this choice, we obtain that, for $t\in I_j$,
\begin{equation}\label{wond1}
\tilde E_{\theta,j}(v(t),w(t))\leq e^{-\delta(t-(s+j\eta))}\tilde
E_{\theta,j}(v(s+j\eta),w(s+j\eta)).
\end{equation}
Iterating inequality (\ref{wond1}) and taking into account
(\ref{serve2}), we obtain, for $j\in\N_0$ and  $t\in I_j$, that
\begin{equation}\label{wond2}
\tilde E_{\theta}(v(t),w(t))\leq(\frac{1+\rho}{1-\rho})^{j+1}
e^{-\delta(t-s)}\tilde E_{\theta}(v_s,w_s).
\end{equation}
At this point, we choose $\rho$ in such a way that
$$(\frac{1+\rho}{1-\rho}) e^{-\delta \eta/2} \leq 1.$$ With this
choice, we obtain that, for $\theta\geq\theta_\rho$,
\begin{equation}\label{wond3}
\tilde E_{\theta}(v(t),w(t))\leq(\frac{1+\rho}{1-\rho})
e^{-\delta/2(t-s)}\tilde E_{\theta}(v_s,w_s),\quad t\geq s.
\end{equation}
Finally, putting together (\ref{deca4}) and (\ref{wond3}), we
obtain (\ref{dec2}).
\end{proof}

Now we can conclude the proof of Theorem \ref{th1}. Fix
$\theta\geq\theta_0$, where $\theta_0$ is given by Proposition
\ref{decay}. Thanks to the decay estimate (\ref{dec1}), we can let
$s$ tend to $-\infty$ in (\ref{k2}), so as to obtain
\begin{equation}\label{gulp1}
(\bar v(t),\bar w(t))=\int_{-\infty}^t \mathbf
U_{\epsilon,-1}[\theta](t,p)(0,(\theta/\epsilon)\bar v(p))
\,(dp\mid Z_{-1})
\end{equation}
for all $t\in\R$. Now observe that the mapping
$p\mapsto(0,(\theta/\epsilon)\bar v(p))$ is continuous into $Z_0$.
Therefore, thanks to the decay estimate (\ref{dec2}), we deduce
that
\begin{equation}\label{gulp2}
(\bar v(t),\bar w(t))=\int_{-\infty}^t \mathbf
U_{\epsilon,0}[\theta](t,p)(0,(\theta/\epsilon)\bar v(p))
\,(dp\mid Z_{0}).
\end{equation}
It follows that $(\bar v(\cdot),\bar w(\cdot))$ is continuous into
$Z_0$ and, for all $t\in\R$,
\begin{multline*}
\|(\bar v(t),\bar w(t))\|_{Z_{\epsilon,0}[\theta]}\leq
\int_{-\infty}^t Me^{-\delta(t-p)}\|(0,(\theta/\epsilon)\bar
v(p))\|_{Z_{\epsilon,0}[\theta]} \,dp\\ \leq \int_{-\infty}^t
Me^{-\delta(t-p)}\epsilon^{1/2}\|(\theta/\epsilon)\bar
v(p))\|_{L^2} \,dp\leq\int_{-\infty}^t
Me^{-\delta(t-p)}(\theta/\epsilon)R
\,dp=\frac{MR\theta}{\delta\epsilon}.
\end{multline*}
It follows that $(\bar u(\cdot),\bar v(\cdot))$ is continuously
differentiable into $Z_0$, with
\begin{equation*}\begin{cases}
(\partial_t\mid H_1)\bar u(t)=(\partial_t\mid H_0)\bar u(t)=\bar
v(t)\\ \epsilon(\partial_t\mid H_{0})\bar v(t)=\epsilon\bar w(t)
=\epsilon(\partial_t\mid H_{-1})\bar v(t)=-\bar v(t)-\mathbf
A_{-1}\bar u(t)+\hat f(\bar u(t)).
\end{cases}
\end{equation*}
Now we have that
\begin{equation}\label{urka}
\mathbf A_{-1}\bar u(t)=- \epsilon\bar w(t)-\bar v(t)+\hat f(\bar
u(t)).
\end{equation}
The right hand side of (\ref{urka}) is a continuous function of
$t$ into $L^2(\Omega)$. Then $\bar u(\cdot)$ is a continuous
function into $D(\mathbf A_0)$, and
\begin{equation*}
\|\mathbf A_0\bar u(t)\|_{L^2}\leq\epsilon\|\bar
w(t)\|_{L^2}+\|\bar v(t)\|_{L^2}+\|\hat f(\bar u(t))\|_{L^2}
\end{equation*}
Summing up, we obtain that
\begin{equation*}
\sup_{t\in\R}(\|\mathbf A_0\bar u(t)\|_{L^2}^2+\|\bar
v(t)\|_{H^1_0}^2+\epsilon\|(\partial_t\mid H_0)\bar
v(t)\|_{L^2}^2)\leq 4\frac{MR\theta}{\delta\epsilon}+\tilde
C^2(1+R^3)^2.
\end{equation*}
This concludes the proof of Theorem \ref{th1}.

\section{Proof of Theorem 2}
Throughout this section, for every $\epsilon\in]0,1]$, we denote
by $(\bar u_\epsilon(\cdot), \bar v_\epsilon(\cdot))\colon\R\to
Z_0$ a fixed bounded full solution of (\ref{eq2}), such that
$\sup_{t\in\R}(\|\bar u_\epsilon(t)\|_{H^1_0}^2+\epsilon\|\bar
v_\epsilon(t)\|_{L^2}^2)\leq R$. It follows from Theorem \ref{th1} that
$(\bar u_\epsilon(\cdot),\bar v_\epsilon(\cdot))$ is continuous
into $Z_1$ and continuously differentiable into $Z_0$, with
\begin{equation}\label{new1}\begin{cases}
(\partial_t\mid H_1)\bar u_\epsilon(t)=\bar v_\epsilon(t)\\
\epsilon(\partial_t\mid H_{0})\bar v_\epsilon(t)=-\bar
v_\epsilon(t)-\mathbf A_{0}\bar u_\epsilon(t)+\hat f(\bar
u_\epsilon(t)).
\end{cases}
\end{equation}
Moreover, for every $\epsilon\in]0,1]$ there exists a positive
constant $\tilde R_\epsilon$ such that
\begin{equation}\label{new2}
\sup_{t\in\R}(\|\mathbf A_0 \bar u_\epsilon(t)\|_{L^2}^2+\|\bar
v_\epsilon(t)\|_{H^1_0}^2+\epsilon\|(\partial_t\mid H_0)\bar
v_\epsilon(t)\|_{L^2}^2)\leq \tilde R_\epsilon.
\end{equation}
Set $\bar w(t):=(\partial_t\mid H_{0})\bar v(t)$, $t\in\R$. Using
(\ref{n2}) we see that $(\bar v_\epsilon(\cdot),\bar
w_\epsilon(\cdot))$ is  continuously differentiable into $Z_{-1}$,
and
\begin{equation*}\begin{cases}
(\partial_t\mid H_{0})\bar v_\epsilon(t)=\bar w_\epsilon(t)\\
\epsilon(\partial_t\mid H_{-1})\bar w_\epsilon(t)=-\bar
w_\epsilon(t)-\mathbf A_{-1}\bar v_\epsilon(t)+\widehat{\partial_u
f}(\bar u_\epsilon(t))\cdot\bar v_\epsilon(t)
\end{cases}
\end{equation*}
Let $\theta\geq0$. Since the mapping $t\mapsto(0,
\widehat{\partial_u f}(\bar u_\epsilon(t))\cdot\bar v_\epsilon(t))$
is continuous into $Z_{0}=D(\mathbf B_{\epsilon,-1}[\theta])$, it
follows from Theorem II.1.3 in \cite{Gold} that, for $s$,
$t\in\R$, with $s\leq t$, $(\bar v_\epsilon(\cdot),\bar
w_\epsilon(\cdot))$ satisfies the equality
\begin{multline*}
(\bar v_\epsilon(t),\bar w_\epsilon(t))=\mathbf
T_{\epsilon,-1}[\theta](t-s)(\bar v_\epsilon (s),\bar w_\epsilon
(s))\\+\int_s^t \mathbf
T_{\epsilon,-1}[\theta](t-p)(0,(1/\epsilon)\widehat{\partial_u
f}(\bar u_\epsilon(p))\cdot\bar
v_\epsilon(p)+(\theta/\epsilon)\bar v_\epsilon(p))\,(dp\mid
Z_{-1}).
\end{multline*}
Finally, since $(\bar v_\epsilon(\cdot),\bar w_\epsilon(\cdot))$
is continuous into $Z_{0}$, it follows  that $(\bar
v_\epsilon(\cdot),\bar w_\epsilon(\cdot))$ satisfies the equality
\begin{multline}\label{new3}
(\bar v_\epsilon(t),\bar w_\epsilon(t))=\mathbf
T_{\epsilon,0}[\theta](t-s)(\bar v_\epsilon (s),\bar w_\epsilon
(s))\\+\int_s^t \mathbf
T_{\epsilon,0}[\theta](t-p)(0,(1/\epsilon)\widehat{\partial_u
f}(\bar u_\epsilon(p))\cdot\bar
v_\epsilon(p)+(\theta/\epsilon)\bar v_\epsilon(p))\,(dp\mid
Z_{0}).
\end{multline}
Let $\delta$ be a positive constant, with
$\delta\leq\min\{1/2,\lambda_1/2\}$. We define the following
energy functional on $Z_0$:
\begin{equation}
\tilde E_{\epsilon,\theta}(v,w):=\frac12\epsilon\|\delta
v+w\|_{L^2}^2+\frac12\|v\|_{H^1_0[\theta]}^2+
\frac12(\epsilon\delta^2-\delta)\|v\|_{L^2}^2.
\end{equation}
A direct computation using (\ref{eigen1}) shows that, for all
$\theta\geq0$,
\begin{equation}\label{new4}
\frac14\|(v,w)\|_{Z_{\epsilon,0}[\theta]}\leq\tilde
E_{\epsilon,\theta}(v,w)\leq \frac34
\|(v,w)\|_{Z_{\epsilon,0}[\theta]},\quad (v,w)\in Z_{0}.
\end{equation}
Moreover, by Lemma \ref{equiv}, for every $\rho$, with $0<\rho<1$,
there exists $\theta_\rho>0$ such that, for all
$\theta\geq\theta_\rho$, all $t\in\R$ and all $(v,w)\in Z_{0}$,
\begin{equation}\label{new5}
(1-\rho)\tilde E_{\epsilon,\theta}(v,w)\leq \tilde
E_{\epsilon,\theta}(v,w)+\frac12\int_\Omega \widehat{ \partial_u
f}(\bar u_\epsilon(t))(x)|v(x)|^2\,dx\leq (1+\rho)\tilde
E_{\epsilon,\theta}(v,w).
\end{equation}
Fixing $\rho=1/2$ and setting $\theta_*:=\theta_{1/2}$, we obtain
from (\ref{new4}) and (\ref{new5}) that, for all
$\theta\geq\theta_*$, all $t\in\R$ and all $(v,w)\in Z_{0}$,
\begin{equation}\label{new6}
\frac18\|(v,w)\|_{Z_{\epsilon,0}[\theta]}\leq \tilde
E_{\epsilon,\theta}(v,w)+\frac12\int_\Omega \widehat{
\partial_u f}(\bar u_\epsilon(t))(x)|v(x)|^2\,dx\leq
\frac98\|(v,w)\|_{Z_{\epsilon,0}[\theta]}.
\end{equation}

We define the following function:
\begin{equation}
\Lambda_{\epsilon,\theta}(t):=E_{\epsilon,\theta}(\bar
v_\epsilon(t),\bar w_\epsilon(t))+\frac12\int_\Omega \widehat{
\partial_u f}(\bar u_\epsilon(t))(x)|\bar v_\epsilon(t)(x)|^2\,dx
\end{equation}
We need the following lemma, whose proof is left to the reader:
\begin{Lemma}\label{diffc2}
Assume Hypothesis (\ref{hyp3}). Define the mapping
\begin{equation*}
\mathcal G_\epsilon(t):=\frac12\int_\Omega \widehat{ \partial_u
f}(\bar u_\epsilon(t))(x)|\bar v_\epsilon(t)(x)|^2\,dx.
\end{equation*}
Then $\mathcal G_\epsilon(\cdot)$ is continuously differentiable,
and
\begin{multline*}
\frac{d}{dt}\mathcal G_\epsilon(t)=\frac12\int_\Omega \widehat{
\partial_{uu} f}(\bar u_\epsilon(t))(x)\bar v_\epsilon(t)(x)|\bar v_\epsilon(t)(x)|^2\,dx\\+
\int_\Omega \widehat{ \partial_u f}(\bar u_\epsilon(t))(x)\bar
v_\epsilon(t)(x)\bar w_\epsilon(t)(x) \,dx.
\end{multline*}
\qed\end{Lemma}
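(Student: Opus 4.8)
The plan is to exhibit $\mathcal G_\epsilon(t)$ as $G(\bar u_\epsilon(t),\bar v_\epsilon(t))$ for a scalar functional $G$ of class $C^1$ on $H^1_0(\Omega)\times H^1_0(\Omega)$, and then to differentiate in $t$ by hand, keeping track of the fact that, by Theorem \ref{th1}, $t\mapsto\bar v_\epsilon(t)$ is continuously differentiable only into $L^2(\Omega)$ (with derivative $\bar w_\epsilon$), while $G$ lives naturally on $H^1_0(\Omega)$.

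First I would record the regularity coming from Theorem \ref{th1} and (\ref{new1}): $\bar u_\epsilon(\cdot)$ is continuous into $D(\mathbf A_0)$ and continuously differentiable into $H^1_0(\Omega)$ with $(\partial_t\mid H_1)\bar u_\epsilon=\bar v_\epsilon$, while $\bar v_\epsilon(\cdot)$ is continuous into $H^1_0(\Omega)$ and continuously differentiable into $L^2(\Omega)$ with $(\partial_t\mid H_0)\bar v_\epsilon=\bar w_\epsilon$. From Hypothesis \ref{hyp3}, integrating the Lipschitz bound on $\partial_{uu}f$ twice, one gets the pointwise growth estimates $|\partial_{uu}f(x,u)|\le c(1+|u|)$ and $|\partial_u f(x,u)|\le c(1+|u|^2)$. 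Together with the Sobolev embeddings $H^1_0(\Omega)\hookrightarrow L^p(\Omega)$, $2\le p\le 6$ (in particular $H^1_0(\Omega)\hookrightarrow L^3(\Omega)$), and Hölder's inequality, these estimates show that for each $u\in H^1_0(\Omega)$ the symmetric bilinear form $b_u(\phi,\psi):=\int_\Omega\partial_u f(x,u(x))\phi\psi\,dx$ is bounded on $H^1_0(\Omega)\times H^1_0(\Omega)$, and — this is the point that will matter below — it extends to a bounded form on $H^1_0(\Omega)\times L^2(\Omega)$, since $\partial_u f(x,u)\phi\in L^2(\Omega)$ for $\phi\in H^1_0(\Omega)$ by (\ref{n2}). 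Similarly the trilinear form $(h,\phi,\psi)\mapsto\int_\Omega\partial_{uu}f(x,u(x))h\phi\psi\,dx$ is bounded on $H^1_0(\Omega)^3$, and the Nemitski operators $u\mapsto\widehat{\partial_u f}(u)$, $u\mapsto\widehat{\partial_{uu}f}(u)$ are continuous in the relevant topologies, the latter using again the Lipschitz bound on $\partial_{uu}f$. With these facts in hand I would check that $G(u,v):=\tfrac12 b_u(v,v)$ is well defined and of class $C^1$ on $H^1_0(\Omega)\times H^1_0(\Omega)$, with $D_uG(u,v)[h]=\tfrac12\int_\Omega\partial_{uu}f(x,u)h\,v^2\,dx$ and $D_vG(u,v)[k]=\int_\Omega\partial_u f(x,u)v\,k\,dx$, and that $\mathcal G_\epsilon(t)=G(\bar u_\epsilon(t),\bar v_\epsilon(t))$, which in particular makes $\mathcal G_\epsilon$ well defined.

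For the differentiability in $t$ I would split, for small $s$,
\begin{equation*}
\mathcal G_\epsilon(t+s)-\mathcal G_\epsilon(t)=\bigl[G(\bar u_\epsilon(t+s),\bar v_\epsilon(t+s))-G(\bar u_\epsilon(t),\bar v_\epsilon(t+s))\bigr]+\bigl[G(\bar u_\epsilon(t),\bar v_\epsilon(t+s))-G(\bar u_\epsilon(t),\bar v_\epsilon(t))\bigr].
\end{equation*}
In the first bracket the second argument is frozen in $H^1_0(\Omega)$, so I would apply the fundamental theorem of calculus to the $C^1$ map $G(\cdot,\bar v_\epsilon(t+s))$, divide by $s$, and let $s\to0$, using $\bar u_\epsilon(t+s)\to\bar u_\epsilon(t)$ and $s^{-1}(\bar u_\epsilon(t+s)-\bar u_\epsilon(t))\to\bar v_\epsilon(t)$ in $H^1_0(\Omega)$, $\bar v_\epsilon(t+s)\to\bar v_\epsilon(t)$ in $H^1_0(\Omega)$, and the joint continuity of $D_uG$; this yields $\tfrac12\int_\Omega\widehat{\partial_{uu}f}(\bar u_\epsilon(t))\bar v_\epsilon(t)\,|\bar v_\epsilon(t)|^2\,dx$. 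In the second bracket I would use bilinearity and symmetry of $b_{\bar u_\epsilon(t)}$ to rewrite it as $\tfrac12 b_{\bar u_\epsilon(t)}\bigl(\bar v_\epsilon(t+s)+\bar v_\epsilon(t),\ \bar v_\epsilon(t+s)-\bar v_\epsilon(t)\bigr)$, divide by $s$, and let $s\to0$: now $\bar v_\epsilon(t+s)+\bar v_\epsilon(t)\to2\bar v_\epsilon(t)$ in $H^1_0(\Omega)$ goes into the $H^1_0$-slot and $s^{-1}(\bar v_\epsilon(t+s)-\bar v_\epsilon(t))\to\bar w_\epsilon(t)$ in $L^2(\Omega)$ goes into the $L^2$-slot, so by boundedness of $b_{\bar u_\epsilon(t)}$ on $H^1_0(\Omega)\times L^2(\Omega)$ the limit is $\int_\Omega\widehat{\partial_u f}(\bar u_\epsilon(t))\bar v_\epsilon(t)\bar w_\epsilon(t)\,dx$. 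Adding the two limits gives the asserted formula for $\frac{d}{dt}\mathcal G_\epsilon(t)$; its continuity in $t$ then follows by inspecting the two terms, since $t\mapsto\bar u_\epsilon(t)$, $t\mapsto\bar v_\epsilon(t)$ are continuous into $H^1_0(\Omega)$, $t\mapsto\bar w_\epsilon(t)$ is continuous into $L^2(\Omega)$, and the relevant (tri)linear forms and Nemitski operators are continuous. The only genuine obstacle — and the step I would write most carefully — is this topology mismatch: $\bar v_\epsilon$ is differentiable only into $L^2(\Omega)$, so one must never differentiate the second slot of $G$ inside $H^1_0(\Omega)$; the identity $b_u(a,a)-b_u(b,b)=b_u(a+b,a-b)$, together with the fact that $b_u$ remains bounded on $H^1_0(\Omega)\times L^2(\Omega)$, is exactly what circumvents it. Everything else is the routine verification, via the quadratic and linear growth of $\partial_u f$ and $\partial_{uu}f$ and Hölder's inequality on the (possibly unbounded) domain $\Omega$, that all the integrals converge and that $G$ is of class $C^1$.
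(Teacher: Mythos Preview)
The paper does not actually prove this lemma --- it is stated with the remark ``whose proof is left to the reader'' and a \qed\ --- so there is no argument to compare against. Your proof is correct and complete. You have isolated the only genuine issue: $\bar v_\epsilon(\cdot)$ is continuously differentiable only into $L^2(\Omega)$, not into $H^1_0(\Omega)$, so one cannot naively apply the chain rule to $G$ on $H^1_0(\Omega)\times H^1_0(\Omega)$; your remedy --- the polarization identity $b_u(a,a)-b_u(b,b)=b_u(a+b,a-b)$ together with the fact that $b_u$ extends to a bounded bilinear form on $H^1_0(\Omega)\times L^2(\Omega)$ (which is exactly the content of (\ref{n2})) --- is precisely the right one. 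The H\"older/Sobolev bookkeeping on the possibly unbounded $\Omega$ that you label as routine does indeed close with exponents in $[2,6]$; the paper itself uses the same splittings immediately after the lemma when estimating the cubic term $\int\widehat{\partial_{uu}f}(\bar u_\epsilon)\bar v_\epsilon|\bar v_\epsilon|^2\,dx$.
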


Using (\ref{new3}), Theorem 2.6 in \cite{PR2} and Lemma
\ref{diffc2}, we see that $\Lambda_{\epsilon,\theta}(\cdot)$ is
differentiable and
\begin{multline}\label{new7}
\frac{d}{dt}\Lambda_{\epsilon,\theta}(t)+2\delta\Lambda_{\epsilon,\theta}(t)
=(2\delta\epsilon-1)\|\bar w_\epsilon(t)+\delta\bar
v_\epsilon(t)\|_{L^2}^2\\+\langle \bar w_\epsilon(t)+\delta\bar
v_\epsilon(t),\theta\bar v_\epsilon(t)\rangle_{L^2}
+\frac12\int_\Omega \widehat{
\partial_{uu} f}(\bar u_\epsilon(t))(x)\bar v_\epsilon(t)(x)|\bar
v_\epsilon(t)(x)|^2\,dx.
\end{multline}
By Hypothesis \ref{hyp3}, we have:
\begin{multline*}
\frac12\int_\Omega \widehat{
\partial_{uu} f}(\bar u_\epsilon(t))(x)\bar v_\epsilon(t)(x)|\bar
v_\epsilon(t)(x)|^2\,dx\\ \leq\frac12\int_\Omega C_1(1+|\bar
u_\epsilon(t)(x)|)|\bar v_\epsilon(t)(x)||\bar
v_\epsilon(t)(x)|^2\,dx\\ \leq \frac12 C_1\|\bar
v_\epsilon(t)\|_{L^2}\|\bar v_\epsilon(t)\|_{L^4}^2+\frac12
C_1\|\bar u_\epsilon(t)\|_{L^6}\|\bar v_\epsilon(t)\|_{L^2}\|\bar
v_\epsilon(t)\|_{L^6}^2\\ \leq \frac12C_2(1+R)\|\bar
v_\epsilon(t)\|_{L^2}\|\bar v_\epsilon(t)\|_{H^1_0}^2.
\end{multline*}
It follows that, for every $\nu>0$, there exists $C_\nu>0$ such
that
\begin{multline*}
\frac12\int_\Omega \widehat{
\partial_{uu} f}(\bar u_\epsilon(t))(x)\bar v_\epsilon(t)(x)|\bar
v_\epsilon(t)(x)|^2\,dx \leq\nu\|\bar
v_\epsilon(t)\|_{H^1_0}^2+C_\nu \|\bar
v_\epsilon(t)\|_{L^2}^2\|\bar v_\epsilon(t)\|_{H^1_0}^2 \\\leq
\nu\|\bar v_\epsilon(t)\|_{H^1_0[\theta]}^2+C_\nu \|\bar
v_\epsilon(t)\|_{L^2}^2\|\bar v_\epsilon(t)\|_{H^1_0[\theta]}^2.
\end{multline*}
Then, choosing $\nu\leq\delta$ and using Cauchy-Schwartz
inequality in (\ref{new7}), we get
\begin{equation}\label{new8}
\frac{d}{dt}\Lambda_{\epsilon,\theta}(t)+\delta\Lambda_{\epsilon,\theta}(t)
\leq (\theta^2/2)\|\bar v_\epsilon(t)\|^2_{L^2}+2C_\nu \|\bar
v_\epsilon(t)\|_{L^2}^2\Lambda_{\epsilon,\theta}(t).
\end{equation}
We need the following lemma.
\begin{Lemma}\label{integral}
There exists a positive constant $K$ such that, for all
$\epsilon\in]0,1]$,
\begin{equation*}
\int_{-\infty}^{+\infty}\|\bar v_\epsilon(t)\|_{L^2}^2\,dt\leq K.
\end{equation*}
The constant $K$ depends only on $R$ and on the constants in
Hypotheses \ref{hyp1} and \ref{hyp3}. In particular, $K$ is
independent of $\epsilon$.
\end{Lemma}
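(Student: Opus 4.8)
The plan is to integrate the natural energy identity for the damped wave equation: its left‑hand side is precisely the integral in the statement, while its right‑hand side is a difference of energies which, thanks to the a priori bound on $\|\bar u_\epsilon(t)\|_{H^1_0}$, is bounded independently of $\epsilon$.

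\emph{Step 1 (the dissipation identity).} Put $F(x,u):=\int_0^u f(x,s)\,ds$. From Hypothesis \ref{hyp3} one checks the pointwise bound $|F(x,u)|\leq|f(x,0)|\,|u|+c_1(u^2+u^4)$ for a.e. $x\in\Omega$ and all $u\in\R$, with $c_1$ depending only on the constants in Hypothesis \ref{hyp3}; in particular $F(\cdot,u(\cdot))\in L^1(\Omega)$ whenever $u\in H^1_0(\Omega)$, and $u\mapsto\int_\Omega F(x,u(x))\,dx$ is of class $C^1$ on $H^1_0(\Omega)$ with differential $v\mapsto\langle\hat f(u),v\rangle_{L^2}$ (a routine consequence of Proposition \ref{prop1}). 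For $t\in\R$ set
\begin{equation*}
\mathcal E_\epsilon(t):=\frac12\,\epsilon\|\bar v_\epsilon(t)\|_{L^2}^2+\frac12\|\bar u_\epsilon(t)\|_{H^1_0}^2-\int_\Omega F(x,\bar u_\epsilon(t)(x))\,dx .
\end{equation*}
By Theorem \ref{th1} (applicable since Hypothesis \ref{hyp3} implies Hypothesis \ref{hyp2}), the pair $(\bar u_\epsilon,\bar v_\epsilon)$ is continuous into $Z_1$ and continuously differentiable into $Z_0$, and $(\bar v_\epsilon,\bar w_\epsilon)$ is continuous into $Z_0$; hence $\mathcal E_\epsilon\in C^1(\R)$. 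Differentiating term by term, substituting $\epsilon\bar w_\epsilon(t)=-\bar v_\epsilon(t)-\mathbf A_0\bar u_\epsilon(t)+\hat f(\bar u_\epsilon(t))$ from (\ref{new1}), and using $\langle\mathbf A_0\bar u_\epsilon(t),\bar v_\epsilon(t)\rangle_{L^2}=\langle\bar u_\epsilon(t),\bar v_\epsilon(t)\rangle_{H^1_0}$ (legitimate because $\bar u_\epsilon(t)\in D(\mathbf A_0)$ and $\bar v_\epsilon(t)\in H^1_0(\Omega)$), all terms cancel except the damping one, so
\begin{equation*}
\mathcal E_\epsilon'(t)=-\|\bar v_\epsilon(t)\|_{L^2}^2,\qquad t\in\R .
\end{equation*}

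\emph{Step 2 (a uniform bound on $\mathcal E_\epsilon$).} The first two summands of $\mathcal E_\epsilon(t)$ do not exceed $R/2$ by assumption, so it suffices to bound $\int_\Omega F(x,\bar u_\epsilon(t)(x))\,dx$ uniformly in $t$ and $\epsilon$. Using the growth bound on $F$ from Step 1 together with $\|\bar u_\epsilon(t)\|_{L^2}^2\leq\lambda_1^{-1}\|\bar u_\epsilon(t)\|_{H^1_0}^2\leq R/\lambda_1$, the Sobolev embedding $H^1_0(\Omega)\hookrightarrow L^6(\Omega)$ (whose constant depends only on the constants in Hypothesis \ref{hyp1}, by Lemma 3.4 in \cite{PR2}) and the interpolation inequality $\|w\|_{L^4}^4\leq\|w\|_{L^2}\|w\|_{L^6}^3$, I obtain $|\mathcal E_\epsilon(t)|\leq C_1$ for all $t\in\R$ and all $\epsilon\in]0,1]$, where $C_1$ depends only on $R$ and on the constants in Hypotheses \ref{hyp1} and \ref{hyp3}.

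\emph{Step 3 (conclusion).} Integrating the identity of Step 1 over $[s,t]$ gives $\int_s^t\|\bar v_\epsilon(p)\|_{L^2}^2\,dp=\mathcal E_\epsilon(s)-\mathcal E_\epsilon(t)\leq 2C_1$ for every $s\leq t$; letting $s\to-\infty$ and $t\to+\infty$ and invoking the monotone convergence theorem yields $\int_{-\infty}^{+\infty}\|\bar v_\epsilon(p)\|_{L^2}^2\,dp\leq 2C_1=:K$, which is independent of $\epsilon$. The step I expect to be the main obstacle is the uniform‑in‑$\epsilon$ control of the potential term on the possibly unbounded domain $\Omega$: after passing to the primitive $F$ one must retain only powers of $|u|$ not exceeding $4$ and exploit exactly the $L^2$‑ and $L^6$‑integrability of $\bar u_\epsilon(t)$ supplied by the a priori bound, since lower‑order or constant contributions need not be integrable over $\Omega$.
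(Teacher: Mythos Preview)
Your proof is correct and follows essentially the same approach as the paper: both introduce the standard Lyapunov functional $L(u,v)=\frac{\epsilon}{2}\|v\|_{L^2}^2+\frac12\|u\|_{H^1_0}^2-\int_\Omega F(x,u)\,dx$, use the dissipation identity $\frac{d}{dt}L(\bar u_\epsilon,\bar v_\epsilon)=-\|\bar v_\epsilon\|_{L^2}^2$, and bound $|L|$ uniformly in $t$ and $\epsilon$ via the a priori bound $R$. The only differences are that the paper cites Proposition~4.1 in \cite{PR2} for the energy identity (valid already at the level of mild solutions), whereas you justify differentiability through the extra regularity provided by Theorem~\ref{th1}, and that you spell out in detail the growth estimate $|F(x,u)|\le|f(x,0)|\,|u|+c_1(u^2+u^4)$ and the ensuing $L^2/L^4/L^6$ bounds that the paper subsumes in the phrase ``a suitable constant depending on $R$''.
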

\begin{proof}
Define the standard Lyapunov functional
\begin{equation*}
L(u,v):=\epsilon\frac12\|v\|^2_{L^2}+\frac12\|u\|_{H^1_0}^2-\int_\Omega
F(x,u(x))\,dx,\quad (u,v)\in H^1_0(\Omega)\times L^2(\Omega),
\end{equation*}
where $F(x,u):=\int_0^uf(x,s)ds$. Then the mapping $t\mapsto
L(\bar u(t),\bar v(t))$ is differentiable and
\begin{equation*}
\frac{d}{dt}L(\bar u(t),\bar v(t))=-\|\bar v(t)\|^2_{L^2}
\end{equation*}
(for details, see the proof of Proposition 4.1 in \cite{PR2}).
Then, for every $t_1<t_2$,
\begin{equation*}
\int_{t_1}^{t_2}\|\bar v_\epsilon(t)\|_{L^2}^2\,dt\leq |L(\bar
u(t_1),\bar v(t_1) )|+|L(\bar u(t_1),\bar v(t_2))|\leq K(R),
\end{equation*}
where $K(R)$ is a suitable constant depending on $R$ and on the
constants of Hypothesis \ref{hyp3}.
\end{proof}
Let $\sigma$, $t$, $\tau\in\R$, with $\sigma\geq t\geq\tau$. We
multiply (\ref{new8}) by $e^{\delta( t-\tau)-\int_\tau^t2C_\nu
\|\bar v_\epsilon(s)\|_{L^2}^2\,ds}$ and we obtain that
\begin{equation*}
\frac{d}{dt}(e^{\delta( t-\tau)-\int_\tau^t2C_\nu \|\bar
v_\epsilon(s)\|_{L^2}^2\,ds}\Lambda_{\epsilon,\theta}(t))\leq(\theta^2/2)
e^{\delta( t-\tau)-\int_\tau^t2C_\nu \|\bar
v_\epsilon(s)\|_{L^2}^2\,ds}\|\bar v_\epsilon(t)\|^2_{L^2}.
\end{equation*}
Integrating on $[\tau,\sigma]$, we get
\begin{multline*}
e^{\delta( \sigma-\tau)-\int_\tau^\sigma2C_\nu \|\bar
v_\epsilon(s)\|_{L^2}^2\,ds}\Lambda_{\epsilon,\theta}(\sigma)\\\leq\Lambda_{\epsilon,\theta}(\tau)
+(\theta^2/2)\int_\tau^\sigma e^{\delta( t-\tau)-\int_\tau^t2C_\nu
\|\bar v_\epsilon(s)\|_{L^2}^2\,ds}\|\bar
v_\epsilon(t)\|^2_{L^2}\,dt.
\end{multline*}
It follows from Lemma \ref{integral} that
\begin{multline*}
\Lambda_{\epsilon,\theta}(\sigma)\leq e^{-\delta(
\sigma-\tau)+\int_\tau^\sigma2C_\nu \|\bar
v_\epsilon(s)\|_{L^2}^2\,ds}\Lambda_{\epsilon,\theta}(\tau)\\
+(\theta^2/2)\int_\tau^\sigma e^{-\delta( \sigma-t)+\int_t^\sigma
2C_\nu \|\bar v_\epsilon(s)\|_{L^2}^2\,ds}\|\bar
v_\epsilon(t)\|^2_{L^2}\,dt\\ \leq e^{-\delta( \sigma-\tau)}
e^{2C_\nu K}\Lambda_{\epsilon,\theta}(\tau)+(\theta^2/2)Ke^{2C_\nu
K}.
\end{multline*}
Using (\ref{new6}),(\ref{new2}) and (\ref{equ1}) we get
\begin{multline*}
(1/8)\|(\bar v_\epsilon(\sigma),\bar
w_\epsilon(\sigma))\|^2_{Z_{\epsilon,0}[\theta]}\\\leq
(9/8)e^{-\delta( \sigma-\tau)} e^{2C_\nu K}\|(\bar
v_\epsilon(\tau),\bar
w_\epsilon(\tau))\|^2_{Z_{\epsilon,0}[\theta]}+(\theta^2/2)Ke^{2C_\nu
K}\\ \leq (9/8)(\frac{\theta+\lambda_1}{\lambda_1})e^{-\delta(
\sigma-\tau)} e^{2C_\nu K}\tilde R_\epsilon+(\theta^2/2)Ke^{2C_\nu
K}.
\end{multline*}
Letting $\tau$ tend to $-\infty$, we finally get
\begin{equation}
\|(\bar v_\epsilon(\sigma),\bar
w_\epsilon(\sigma))\|^2_{Z_{\epsilon,0}}\leq 4\theta^2Ke^{2C_\nu
K},\quad\sigma\in\R.
\end{equation}
This last inequality, together with (\ref{new1}), yields
\begin{equation*}
\sup_{t\in\R}(\|\mathbf A_0 \bar u_\epsilon(t)\|_{L^2}^2+\|\bar
v_\epsilon(t)\|_{H^1_0}^2+\epsilon\|(\partial_t\mid H_0)\bar
v_\epsilon(t)\|_{L^2}^2)\leq \tilde R,
\end{equation*}
where $\tilde R$ depends only on the constants in Hypotheses
\ref{hyp1} and \ref{hyp3} and on $R$. This concludes the proof of
Theorem \ref{th2}.

\section{An application: upper semicontinuity of attractors}
In this section we assume Hypotheses \ref{hyp1} and \ref{hyp3}. Moreover, we make  the following structure assumption on the nonlinearity $f(x,u)$.

\begin{Hyp}\label{hyp4}There exists a positive number $\mu$ and a function $c(\cdot)\in L^2(\Omega)$ such that:
\begin{enumerate}
\item $f(x,u)u-\mu F(x,u)\leq c(x)$;
\item $F(x,u)\leq c(x)$.
\end{enumerate}
Here, $F(x,u):=\int_0^uf(x,s)\,ds$, $(x,u)\in\Omega\times\R$.
\end{Hyp}
It was proved in \cite{PR2} that under Hypotheses \ref{hyp1}, \ref{hyp3} and \ref{hyp4}, for every $\epsilon\in]0,1]$, equation (\ref{eq1}) (more precisely: its mild formulation (\ref{eq2})) generates a global semiflow in $H^1_0(\Omega)\times L^2(\Omega)$,
possessing a compact global attractor $\mathcal A_\epsilon$. Moreover, there exists a positive constant $R$ such that
\begin{equation*}
\sup_{\epsilon\in]0,1]}\sup\{\|u\|_{H^1_0}^2+\epsilon\|v\|_{L^2}^2\mid(u,v)\in\mathcal A_\epsilon\}\leq R.
\end{equation*}
Consider now the formal limit of (\ref{eq1}) as $\epsilon\to 0$, i.e. the parabolic equation
\begin{equation}\label{peq1}
\begin{aligned}
u_t+\beta(x)u-\sum_{ij}(a_{ij}(x)u_{x_j})_{x_i}&=f(x,u),&&(t,x)\in[0,+\infty[\times\Omega,\\
u&=0,&&(t,x)\in[0,+\infty[\times\partial\Omega
\end{aligned}\end{equation}
with Cauchy datum $u(0)=u_0$. Again we rewrite (\ref{peq1}) as an integral evolution equation in the space $H^1_0(\Omega)$, namely
\begin{equation}\label{peq2}
u(t)=e^{-\mathbf A_0t}u_0+\int_0^t e^{-\mathbf A_0(t-s)}\hat f(u(s))\,ds,
\end{equation}
where $e^{-\mathbf A_0 t}$, $t\geq 0$, is the analytic semigroup generated by the positive  selfadjoint operator $\mathbf A_0$ in $L^2(\Omega)$.
It was proved in \cite{PR1} that under Hypotheses \ref{hyp1}, \ref{hyp3} and \ref{hyp4},  equation (\ref{peq1}) (more precisely: its mild formulation (\ref{peq2})) generates a global semiflow in $H^1_0(\Omega)$,
possessing a compact global attractor $\tilde{\mathcal A}$. Moreover, $\tilde{\mathcal A}\subset D(\mathbf A_0)$ and $\tilde{\mathcal A}$ is compact in $ D(\mathbf A_0)$.

Let $\Gamma\colon D(\mathbf A_0)\to H^1_0(\Omega)\times L^2(\Omega)$ be defined by $\Gamma(u):=(u,\mathbf A_0 u+\hat f(u))$. Set $\mathcal A_0:=\Gamma(\tilde{\mathcal A})$.
In \cite{PR3} the following result was proved.

\begin{Theorem}[{\bf Theor. 1.4 in \cite{PR3}}]
The family $(\mathcal A_\epsilon)_{\epsilon\in[0,1]}$ is upper semicontinuous at $\epsilon=0$ with respect to the topology of $H^1_0(\Omega)\times H^{-1}(\Omega)$, i.e.
\begin{equation*}
\lim_{\epsilon\to 0}\sup_{y\in\mathcal A_\epsilon}\inf_{z\in\mathcal A_0}\|y-z\|_{H^1_0\times H^{-1}}=0.
\end{equation*}\qed
\end{Theorem}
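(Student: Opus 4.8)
The final statement is the upper semicontinuity at $\epsilon=0$ in the weak phase space $H^1_0\times H^{-1}$, and the plan is to deduce it from the $\epsilon$-uniform regularity of Corollary \ref{cor2} by a contradiction argument carried out along full trajectories and passing to the singular limit $\epsilon\to 0$ at the very end. Suppose the conclusion fails. Then there are $\delta>0$, a sequence $\epsilon_n\downarrow 0$ and points $y_n=(u_n,v_n)\in\mathcal A_{\epsilon_n}$ with $\inf_{z\in\mathcal A_0}\|y_n-z\|_{H^1_0\times H^{-1}}\geq\delta$ for every $n$. Each $\mathcal A_{\epsilon_n}$ is a compact invariant subset of the semiflow generated by (\ref{eq2}) with $\epsilon=\epsilon_n$, and $\sup_n\sup_{(u,v)\in\mathcal A_{\epsilon_n}}(\|u\|_{H^1_0}^2+\epsilon_n\|v\|_{L^2}^2)\leq R$, so Corollary \ref{cor2} supplies a constant $\tilde R$, independent of $n$, with $\|\mathbf A_0 u\|_{L^2}^2+\|v\|_{H^1_0}^2\leq\tilde R$ on $\mathcal A_{\epsilon_n}$ for all $n$. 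By invariance, each $y_n$ lies on a full solution $(\bar u_n(\cdot),\bar v_n(\cdot))\colon\R\to\mathcal A_{\epsilon_n}$ of (\ref{eq2}) with $(\bar u_n(0),\bar v_n(0))=y_n$; by Theorem \ref{th1} (whose hypotheses hold by Lemma \ref{modcont}) this solution is $C^1$ into $Z_0$, $\bar v_n=(\partial_t\mid H_0)\bar u_n$, and, setting $\bar w_n:=(\partial_t\mid H_0)\bar v_n$, one has $\|\mathbf A_0\bar u_n(t)\|_{L^2}^2+\|\bar v_n(t)\|_{H^1_0}^2+\epsilon_n\|\bar w_n(t)\|_{L^2}^2\leq\tilde R$ for all $t\in\R$. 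In particular $\|\epsilon_n\bar w_n(t)\|_{L^2}\leq(\epsilon_n\tilde R)^{1/2}\to 0$ uniformly in $t$, and by (\ref{new1}) the function $\bar u_n$ is a mild solution of the perturbed parabolic equation $(\partial_t\mid H_0)\bar u_n+\mathbf A_0\bar u_n=\hat f(\bar u_n)-\epsilon_n\bar w_n$, i.e.
\[
\bar u_n(t)=e^{-\mathbf A_0(t-s)}\bar u_n(s)+\int_s^t e^{-\mathbf A_0(t-p)}\bigl(\hat f(\bar u_n(p))-\epsilon_n\bar w_n(p)\bigr)\,dp,\qquad s\leq t .
\]

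Next I would extract a limiting trajectory. For fixed $T>0$, the bounds above make $\{\bar u_n\}$ bounded in $C([-T,T];D(\mathbf A_0))$ and equi-Lipschitz into $H^1_0$ (since $\bar u_{n,t}=\bar v_n$ is bounded there). Passing to a subsequence and diagonalising over $T\in\N$ produces a bounded $\bar u\colon\R\to D(\mathbf A_0)$ with $\bar u_n(t)\rightharpoonup\bar u(t)$ in $D(\mathbf A_0)$ for each $t$. The essential step is to upgrade this to strong convergence $\bar u_n\to\bar u$ in $C_{\loc}(\R;H^1_0(\Omega))$; since $\Omega$ may be unbounded, $D(\mathbf A_0)$ does not embed compactly in $H^1_0(\Omega)$, so I would combine (i) the precompactness of $\{\bar u_n(t)\}$ in $H^1(V)$ for every open $V$ with $\overline V$ compact and $\overline V\subset\Omega$ — obtained from the uniform $D(\mathbf A_0)$-bound by interior elliptic regularity and Rellich's theorem — with (ii) a tail estimate, uniform in $n$ and in $t\in[-T,T]$, of the form $\|\bar u_n(t)\|_{H^1_0(\Omega\setminus\Omega_\rho)}\to 0$ as $\rho\to\infty$, where $\Omega_\rho:=\{x\in\Omega:|x|<\rho,\ \operatorname{dist}(x,\partial\Omega)>1/\rho\}$. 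Step (ii) is the only place where the dissipativity Hypothesis \ref{hyp4} is used: it is the $\epsilon$-uniform ``smallness at infinity and near the boundary'' of the absorbing sets of (\ref{eq2}), obtained by the cut-off/energy argument of \cite{PR2} used there to construct $\mathcal A_\epsilon$, and rendered uniform in $\epsilon\in\,]0,1]$ by the $\epsilon$-independent bound on $\bigcup_n\mathcal A_{\epsilon_n}$ in $D(\mathbf A_0)\times H^1_0$. Together, (i) and (ii) give precompactness of $\{\bar u_n(t)\}$ in $H^1_0(\Omega)$, and with the equi-Lipschitz bound, Arzel\`a--Ascoli yields the strong convergence $\bar u_n\to\bar u$ in $C_{\loc}(\R;H^1_0(\Omega))$. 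Then $\hat f(\bar u_n)\to\hat f(\bar u)$ in $C_{\loc}(\R;L^2(\Omega))$ by Proposition \ref{prop1}, $\epsilon_n\bar w_n\to 0$ in $C_{\loc}(\R;L^2(\Omega))$, and letting $n\to\infty$ in the integral identity above shows that $\bar u$ is a bounded full mild solution of (\ref{peq2}) on $\R$. Since $\tilde{\mathcal A}$ is the global attractor of the parabolic semiflow, every bounded full orbit is contained in it, so $\bar u(t)\in\tilde{\mathcal A}$ for all $t$; in particular $\bar u(0)\in\tilde{\mathcal A}$.

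Finally I would read off the contradiction. From $\bar u_n(0)\to\bar u(0)$ in $H^1_0(\Omega)=H_1$ and the fact that $\mathbf A_{-1}\colon H_1\to H_{-1}$ is an isomorphism, we get $\mathbf A_{-1}\bar u_n(0)\to\mathbf A_{-1}\bar u(0)$ in $H^{-1}(\Omega)$; moreover $\hat f(\bar u_n(0))\to\hat f(\bar u(0))$ in $L^2(\Omega)$ by Proposition \ref{prop1}, and $\epsilon_n\bar w_n(0)\to 0$ in $L^2(\Omega)$. Passing to the limit in the identity $\bar v_n(0)=-\epsilon_n\bar w_n(0)-\mathbf A_{-1}\bar u_n(0)+\hat f(\bar u_n(0))$ (a consequence of (\ref{new1})) therefore gives $\bar v_n(0)\to-\mathbf A_{-1}\bar u(0)+\hat f(\bar u(0))$ in $H^{-1}(\Omega)$. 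Consequently $y_n=(\bar u_n(0),\bar v_n(0))$ converges, in the topology of $H^1_0(\Omega)\times H^{-1}(\Omega)$, to the point $\Gamma(\bar u(0))$, which belongs to $\Gamma(\tilde{\mathcal A})=\mathcal A_0$; this contradicts $\inf_{z\in\mathcal A_0}\|y_n-z\|_{H^1_0\times H^{-1}}\geq\delta$, and the theorem follows.

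The step I expect to be the main obstacle is the strong $H^1_0$-compactness on the possibly unbounded domain $\Omega$, i.e.\ ingredient (ii) above: passing from a family $\bigcup_{\epsilon\in]0,1]}\mathcal A_\epsilon$ that is only bounded — not relatively compact — in $D(\mathbf A_0)\times H^1_0$ to genuine $H^1_0$-convergence forces one to establish an $\epsilon$-uniform version of the tail estimates of \cite{PR2}, and this is precisely where Hypothesis \ref{hyp4} enters in an essential way. By contrast, the uniform regularity (Corollary \ref{cor2}), the weak compactness of the lifted trajectories, and the identification of the limit as a full parabolic orbit inside $\tilde{\mathcal A}$ are all routine.
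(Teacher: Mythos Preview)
The theorem you are proving is not proved in this paper: it is quoted verbatim from \cite{PR3} and closed with a \qed. What the paper does tell you about the original proof is that its core is Theorem~3.8 of \cite{PR3} (restated here as Theorem~\ref{subsequence}): given bounded full solutions $z_n$ with $\epsilon_n\to 0$, a subsequence converges in $H^1_0\times H^{-1}$, uniformly on compacta, to a full parabolic trajectory. The upper semicontinuity in $H^1_0\times H^{-1}$ then follows by the obvious contradiction argument. Crucially, Theorem~\ref{subsequence} is established in \cite{PR3} \emph{without} any $D(\mathbf A_0)\times H^1_0$ regularity: only the basic energy bound $\|u_n\|_{H^1_0}^2+\epsilon_n\|v_n\|_{L^2}^2\leq R$ is assumed, and the $H^1_0$-compactness of $\{u_n(t)\}$ on the (possibly unbounded) domain is obtained there by the tail/cut-off machinery of \cite{PR2} at the energy level.

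Your overall scheme---contradiction plus subsequential convergence of full trajectories to a full parabolic orbit---is the same, but your convergence argument takes a genuinely different route: you invoke Corollary~\ref{cor2} (hence Hypothesis~\ref{hyp3}) to lift everything to $D(\mathbf A_0)\times H^1_0$, and then run compactness one rung higher on the scale. Two remarks. First, this is logically anachronistic for the theorem as stated: \cite{PR3} predates the regularity results of the present paper, and its proof cannot and does not use them. Second, once you have the uniform $D(\mathbf A_0)\times H^1_0$ bound and the tail estimate you describe, you in fact obtain $\bar v_n(0)\to -\mathbf A_0\bar u(0)+\hat f(\bar u(0))$ in $L^2(\Omega)$ (not merely in $H^{-1}$), because $\mathbf A_0\bar u_n(0)\to\mathbf A_0\bar u(0)$ in $L^2$ and $\epsilon_n\bar w_n(0)\to 0$ in $L^2$. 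In other words, your argument is really a proof of the stronger Theorem~\ref{newtheorem} (upper semicontinuity in $H^1_0\times L^2$), which is exactly how the present paper uses Corollary~\ref{cor2}: to upgrade Theorem~\ref{subsequence} from $H^{-1}$ to $L^2$. So your proposal is correct in spirit, but it overshoots the target and does not reflect the \cite{PR3} proof; the $H^{-1}$ statement is obtained there more cheaply, via Theorem~\ref{subsequence}, with no appeal to the higher regularity.
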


This result is not completely satisfactory. The optimal result would be to obtain upper semicontinuity with respect to the
topology of $H^1_0(\Omega)\times L^2(\Omega)$. Actually, thanks to Theorem \ref{th2}, we are now able to prove the optimal result.

\begin{Theorem}\label{newtheorem} The family $(\mathcal A_\epsilon)_{\epsilon\in[0,1]}$ is upper semicontinuous at $\epsilon=0$ with respect to the topology of $H^1_0(\Omega)\times L^2(\Omega)$, i.e.
\begin{equation*}
\lim_{\epsilon\to 0}\sup_{y\in\mathcal A_\epsilon}\inf_{z\in\mathcal A_0}\|y-z\|_{H^1_0\times L^2}=0.
\end{equation*}
\end{Theorem}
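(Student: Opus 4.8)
The plan is to upgrade the existing upper semicontinuity statement in $H^1_0(\Omega)\times H^{-1}(\Omega)$ (Theorem 1.4 in \cite{PR3}) to the stronger topology $H^1_0(\Omega)\times L^2(\Omega)$, by exploiting the uniform-in-$\epsilon$ regularity of the attractors $\mathcal A_\epsilon$ furnished by Theorem \ref{th2}. The starting point is that, by the results recalled at the beginning of this section, Hypotheses \ref{hyp1}, \ref{hyp3} and \ref{hyp4} hold and there is an $\epsilon$-independent $R>0$ with $\sup_{\epsilon\in]0,1]}\sup\{\|u\|_{H^1_0}^2+\epsilon\|v\|_{L^2}^2\mid(u,v)\in\mathcal A_\epsilon\}\leq R$. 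Since $\mathcal A_\epsilon$ is invariant, every point of $\mathcal A_\epsilon$ lies on a bounded full solution contained in $\mathcal A_\epsilon$, and by Theorem \ref{th2} (applied with that $R$) there is an $\epsilon$-independent constant $\tilde R$ such that $\|\mathbf A_0 u\|_{L^2}^2+\|v\|_{H^1_0}^2\leq\tilde R$ for all $(u,v)\in\mathcal A_\epsilon$ and all $\epsilon\in]0,1]$. In particular $\bigcup_{\epsilon\in]0,1]}\mathcal A_\epsilon$ is a bounded subset of $D(\mathbf A_0)\times H^1_0(\Omega)$.

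Next I argue by contradiction. Suppose the conclusion fails; then there exist $\eta>0$, a sequence $\epsilon_n\to 0$, and points $y_n=(u_n,v_n)\in\mathcal A_{\epsilon_n}$ with $\inf_{z\in\mathcal A_0}\|y_n-z\|_{H^1_0\times L^2}\geq\eta$ for all $n$. By the uniform bound above, $(u_n,v_n)$ is bounded in $D(\mathbf A_0)\times H^1_0(\Omega)$. Now $D(\mathbf A_0)$ embeds into $H^1_0(\Omega)$ and $H^1_0(\Omega)$ embeds into $L^2(\Omega)$; the embeddings are continuous but, $\Omega$ being possibly unbounded, not compact, so I cannot pass to a convergent subsequence directly. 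Instead I use the already-established convergence in the weaker norm: by Theorem 1.4 in \cite{PR3}, $\sup_{y\in\mathcal A_{\epsilon_n}}\mathrm{dist}_{H^1_0\times H^{-1}}(y,\mathcal A_0)\to 0$, so there are points $z_n=(p_n,q_n)\in\mathcal A_0$ with $\|y_n-z_n\|_{H^1_0\times H^{-1}}\to 0$. Since $\mathcal A_0=\Gamma(\tilde{\mathcal A})$ with $\tilde{\mathcal A}$ compact in $D(\mathbf A_0)$ and $\Gamma$ continuous from $D(\mathbf A_0)$ into $H^1_0(\Omega)\times L^2(\Omega)$, the set $\mathcal A_0$ is compact in $H^1_0(\Omega)\times L^2(\Omega)$; hence, along a subsequence, $z_n\to z_*$ in $H^1_0(\Omega)\times L^2(\Omega)$, and therefore $y_n\to z_*$ in $H^1_0(\Omega)\times H^{-1}(\Omega)$.

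It remains to upgrade $y_n\to z_*$ from the $H^1_0\times H^{-1}$ topology to the $H^1_0\times L^2$ topology, which is exactly where the uniform regularity does the work. The first components $u_n$ already converge to $p_*$ in $H^1_0(\Omega)$, so only the second components need attention: I must show $v_n\to q_*$ in $L^2(\Omega)$, knowing $v_n\to q_*$ in $H^{-1}(\Omega)$ and $\sup_n\|v_n\|_{H^1_0}^2\leq\tilde R$. This is a standard interpolation/compactness-free argument: $v_n-q_*$ is bounded in $H^1_0(\Omega)=H_1$ and tends to $0$ in $H^{-1}(\Omega)=H_{-1}$, and since $L^2(\Omega)=H_0$ is the interpolation space halfway between $H_{-1}$ and $H_1$ (concretely, $\|w\|_{L^2}^2\leq\|w\|_{H^1_0}\,\|w\|_{H^{-1}}$ by the spectral calculus for $\mathbf A_0$, using $\|w\|_{H_0}^2=\langle\mathbf A_0^{1/2}w,\mathbf A_0^{-1/2}w\rangle\le\|\mathbf A_0^{1/2}w\|\,\|\mathbf A_0^{-1/2}w\|$, i.e. $\|w\|_{H_0}^2\le\|w\|_{H_1}\|w\|_{H_{-1}}$), we get $\|v_n-q_*\|_{L^2}^2\leq\|v_n-q_*\|_{H^1_0}\|v_n-q_*\|_{H^{-1}}\leq(2\tilde R^{1/2}+\|q_*\|_{H^1_0})\|v_n-q_*\|_{H^{-1}}\to 0$. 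Hence $y_n\to z_*\in\mathcal A_0$ in $H^1_0\times L^2$, contradicting $\|y_n-z_n\|_{H^1_0\times L^2}\geq\eta$ together with $z_n\to z_*$. This contradiction proves the theorem.

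The main obstacle is the non-compactness of the Sobolev embeddings on the unbounded domain $\Omega$: one cannot extract a strongly convergent subsequence of $(u_n,v_n)$ by soft arguments. The resolution is to combine the already-known weak-topology convergence from \cite{PR3} with the $\epsilon$-uniform $D(\mathbf A_0)\times H^1_0$ bound from Theorem \ref{th2} and the elementary interpolation inequality $\|w\|_{H_0}^2\le\|w\|_{H_1}\|w\|_{H_{-1}}$, which converts $H^{-1}$-convergence of an $H^1_0$-bounded sequence into $L^2$-convergence without any compactness. One should also double-check the continuity of $\Gamma\colon D(\mathbf A_0)\to H^1_0(\Omega)\times L^2(\Omega)$ (continuity of $u\mapsto\mathbf A_0 u$ from $D(\mathbf A_0)$ to $L^2$ is immediate, and continuity of $u\mapsto\hat f(u)$ follows from Proposition \ref{prop1} since $D(\mathbf A_0)\hookrightarrow H^1_0(\Omega)$), so that $\mathcal A_0$ is genuinely compact in $H^1_0\times L^2$; this compactness is what provides the limit point $z_*\in\mathcal A_0$.
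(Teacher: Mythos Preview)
Your argument is correct and follows essentially the same strategy as the paper: both combine the $\epsilon$-uniform $H^1_0$-bound on the second component coming from Theorem~\ref{th2} with the interpolation inequality $\|w\|_{L^2}^2\le\|w\|_{H^1_0}\|w\|_{H^{-1}}$ to upgrade $H^{-1}$-convergence to $L^2$-convergence, and then close by contradiction. The only cosmetic differences are that the paper phrases things at the level of full solutions via Theorem~\ref{subsequence} while you work directly at the level of points using Theorem~1.4 of \cite{PR3} and compactness of $\mathcal A_0$, and that when you invoke Theorem~\ref{th2} you should either cite Corollary~\ref{cor2} or mention Lemma~\ref{modcont} to secure the uniform-continuity hypothesis on $\bar u_\epsilon(\cdot)$.
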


Indeed, the main ingredient in the proof of Theor. 4.1 in \cite{PR3} is the following Theorem.

\begin{Theorem}[{\bf Theor. 3.8 in \cite{PR3}}]\label{subsequence}
Let $(\epsilon_n)_n$ be a sequence of positive numbers converging to $0$. For each $n\in\N$ let $z_n=(u_n,v_n)\colon \R\to H^1_0(\Omega)\times L^2(\Omega)$ be a bounded full solution of (\ref{eq2})
such that
$$\sup_{n\in\N}\sup_{t\in\R}( \|u_n(t)\|_{H^1_0}^2+\epsilon_n\|v_n(t)\|_{L^2}^2)\leq R<\infty.$$
Then a subsequence of $(z_n)_n$ converges in $H^1_0(\Omega)\times H^{-1}(\Omega)$,
uniformly on compact subsets of $\R$, to a function $z\colon\R\to H^1_0(\Omega)\times L^2(\Omega)$ with $z=(u,v)$, where $u$ is a solution of (\ref{peq2}) and $v=(\partial_t\mid L^2(\Omega))u$.\qed
\end{Theorem}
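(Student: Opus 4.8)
The plan is to bypass any direct analysis of the singular limit of the linear semigroups $\mathbf T_{\epsilon_n,0}$, and instead to use the uniform regularity already available from Theorem~\ref{th2} to obtain strong compactness, after which the passage $\epsilon_n\to0$ becomes a routine identification in the equation. I would first note that, under the standing assumptions of this section (Hypotheses \ref{hyp1}, \ref{hyp3} and \ref{hyp4}), every bounded full solution of (\ref{eq2}) lies in the compact global attractor $\mathcal A_{\epsilon_n}$, which is a compact invariant set; hence Lemma~\ref{modcont} shows that each first component $u_n(\cdot)$ is uniformly continuous into $H^1_0(\Omega)$. The conclusion of Theorem~\ref{th2} therefore holds for each $\epsilon_n$, and since the constant $\tilde R$ there depends only on $R$ and on the constants in Hypotheses \ref{hyp1} and \ref{hyp3}, it is independent of $n$:
\begin{equation*}
\sup_{n}\sup_{t\in\R}\bigl(\|\mathbf A_0 u_n(t)\|_{L^2}^2+\|v_n(t)\|_{H^1_0}^2+\epsilon_n\|(\partial_t\mid H_0)v_n(t)\|_{L^2}^2\bigr)\leq\tilde R.
\end{equation*}
In particular $(u_n)$ is bounded in $L^\infty(\R;D(\mathbf A_0))$, and, because $(\partial_t\mid H_0)u_n=v_n$ is bounded in $L^\infty(\R;H^1_0)$, the family $(u_n)$ is \emph{uniformly Lipschitz} from $\R$ into $H^1_0(\Omega)$.

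Next I would extract a limit by compactness. The uniform Lipschitz bound gives equicontinuity into $H^1_0$, so by Arzel\`a--Ascoli it suffices that $\{u_n(t):n\in\N,\ t\in[-T,T]\}$ be precompact in $H^1_0(\Omega)$ for each $T>0$. On a bounded domain this is immediate from the uniform $D(\mathbf A_0)$ bound and the compact embedding $D(\mathbf A_0)\hookrightarrow H^1_0(\Omega)$. On a general unbounded $\Omega$ this embedding fails, and here lies the main obstacle: I would restore precompactness by combining local compactness with uniform tail estimates of the form $\int_{\{|x|>\rho\}}\bigl(A(x)\nabla u_n\cdot\nabla u_n+\beta(x)|u_n|^2\bigr)\,dx\to0$ as $\rho\to\infty$, uniformly in $n$ and $t$, which follow from Hypothesis~\ref{hyp1} and the uniform $D(\mathbf A_0)$ bound by the asymptotic-compactness technique of \cite{PR1,PR2}. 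This yields a subsequence, still written $(u_n)$, with $u_n\to u$ in $C_{\loc}(\R;H^1_0(\Omega))$ and $u\in L^\infty(\R;D(\mathbf A_0))$.

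Finally I would identify the limit. For each $n$, Theorem~\ref{th1} provides the $H_0$-valued equation $\epsilon_n(\partial_t\mid H_0)v_n=-v_n-\mathbf A_0 u_n+\hat f(u_n)$, whence $v_n=\hat f(u_n)-\mathbf A_0 u_n-\epsilon_n(\partial_t\mid H_0)v_n$, and the uniform bound forces $\|\epsilon_n(\partial_t\mid H_0)v_n\|_{L^2}\leq\epsilon_n^{1/2}\tilde R^{1/2}\to0$. Reading this identity in $H^{-1}(\Omega)$ and using $u_n\to u$ in $H^1_0$ uniformly on compact time intervals, Proposition~\ref{prop1} gives $\hat f(u_n)\to\hat f(u)$ in $L^2$ while $\mathbf A_0 u_n=\mathbf A_{-1}u_n\to\mathbf A_{-1}u$ in $H^{-1}$, both uniformly on compacts; hence $v_n\to\hat f(u)-\mathbf A_{-1}u=:v$ in $H^{-1}(\Omega)$, uniformly on compact subsets of $\R$. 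Since $u_n\to u$ and $(\partial_t\mid H_{-1})u_n=v_n\to v$ uniformly on compacts, $u$ is differentiable into $H^{-1}$ with $(\partial_t\mid H_{-1})u=v=-\mathbf A_{-1}u+\hat f(u)$; as $u(t)\in D(\mathbf A_0)$ the right-hand side lies in $L^2(\Omega)$, so $v=(\partial_t\mid L^2)u$, and by the variation-of-constants formula for the analytic semigroup $e^{-\mathbf A_0 t}$ the limit $u$ is a solution of (\ref{peq2}). The hardest ingredient is the uniform tail estimate that recovers compactness in $H^1_0$ on the unbounded domain; once that is secured, the singular passage $\epsilon_n\to0$ is immediate from the uniform bounds, and no separate convergence theorem for the semigroups $\mathbf T_{\epsilon_n,0}$ is required.
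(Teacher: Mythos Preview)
The paper does not prove this statement; it is quoted from \cite{PR3} and closed with a bare \qed, so there is no in-paper argument to compare against. The proof in \cite{PR3} necessarily predates Theorem~\ref{th2} and cannot use it; your route --- bootstrap via Theorem~\ref{th2} to uniform $D(\mathbf A_0)\times H^1_0$ bounds, then Arzel\`a--Ascoli --- is therefore a genuinely different strategy. In fact the paper applies exactly your bootstrap in the paragraph \emph{following} Theorem~\ref{subsequence}, but only to upgrade the already-established $H^{-1}$ convergence of $v_n$ to $L^2$ convergence by interpolation; your plan, if it went through, would merge those two steps and make the citation of \cite{PR3} unnecessary.

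The gap is exactly where you flag it. A uniform $D(\mathbf A_0)$ bound together with Hypothesis~\ref{hyp1} does \emph{not} by itself give the tail estimate: on $\Omega=\R^3$ with $\mathbf A_0=-\Delta+1$, the translates $u_n(x)=\phi(x-ne_1)$ of a fixed bump are bounded in $D(\mathbf A_0)$ yet have no uniform $H^1$-tail decay whatsoever. The tail estimates in \cite{PR1,PR2} are obtained by testing the \emph{equation} against spatial cutoffs and exploiting the dissipativity of Hypothesis~\ref{hyp4}; they are a dynamical statement, not a corollary of static regularity. To close your argument you would have to reproduce that computation and verify its uniformity as $\epsilon_n\to0$, which is precisely the substantive content of \cite{PR3} that you were hoping to bypass. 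Everything downstream --- the identification $v_n\to\hat f(u)-\mathbf A_{-1}u$ in $H^{-1}$ via $\|\epsilon_n(\partial_t\mid H_0)v_n\|_{L^2}\to0$, and the recovery of (\ref{peq2}) from the limiting differential relation --- is correct and cleanly argued.
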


If in Theorem \ref{subsequence} we assume also that, for each $n\in\N$, the function $u_n(\cdot)$ is uniformly continuous, then it follows
from Theorem \ref{th2} that the sequence $(v_n(\cdot))_n$ is bounded in $L^\infty(\R,H^1_0(\Omega))$. Interpolation between $H^1_0(\Omega)$ and $H^{-1}(\Omega)$  then implies that
$v_n(t)\to v(t)$ in $L^2(\Omega)$ uniformly for $t$ lying in compact subsets of $\R$. Now using Lemma \ref{modcont} and an obvious contradiction argument one easily completes the proof of Theorem \ref{newtheorem}.

\end{document}